\newsavebox{\@brx}
\newcommand{\llangle}[1][]{\savebox{\@brx}{\(\m@th{#1\langle}\)}%
  \mathopen{\copy\@brx\mkern2mu\kern-0.9\wd\@brx\usebox{\@brx}}}
\newcommand{\rrangle}[1][]{\savebox{\@brx}{\(\m@th{#1\rangle}\)}%
  \mathclose{\copy\@brx\mkern2mu\kern-0.9\wd\@brx\usebox{\@brx}}}
\numberwithin{equation}{section}
\newtheorem{thm}{Theorem}
\newtheorem{cor}{Corollary} 
\newtheorem{lemma}{Lemma}
\newtheorem{remark}{Remark}
\newtheorem{dfn}{Definition}
\newtheorem{asm}{Assumption}
\newcommand{\Pbb}{\mathbb{P}}
\newcommand{\Ebb}{\mathbb{E}}
\newcommand{\CalF}{\mathcal{F}}
\newcommand{\Nbb}{\mathbb{N}}
\newcommand{\Rbb}{\mathbb{R}}
\newcommand{\Dbb}{\mathbb{D}}
\newcommand{\Cbb}{\mathbb{C}}
\newcommand{\Xbb}{\mathbb{X}}
\newcommand{\calP}{\mathcal{P}}
\newcommand{\CalB}{\mathcal{B}}
\newcommand{\toP}{\stackrel{p}{\rightarrow}}
\DeclareMathOperator*{\essinf}{ess\,inf}
\DeclareMathOperator*{\esssup}{ess\,sup}
\title{Many-Server Queues with Random Service Rates in the Halfin-Whitt Regime: A Measure-Valued Process Approach}
\author[1]{Burak B\"{u}ke}
\author[1]{Wenyi Qin} 
\affil[1]{School of Mathematics, University of Edinburgh, Edinburgh, Scotland, UK \authorcr {\tt B.Buke@ed.ac.uk, W.Qin-3@sms.ed.ac.uk}}
\date{\today}
\begin{document}

\maketitle
\begin{abstract}
We consider many-server queueing systems with heterogeneous exponential servers and renewal arrivals. The service rate of each server is a random variable drawn from a given distribution. We develop a framework for analyzing the heavy traffic limit of these queues in random environment using probability measure-valued stochastic processes. We introduce the measure-valued fairness process which denotes the proportion of cumulative idleness experienced by servers whose rates fall in a Borel subset of the support of the service rates. It can be shown that these fairness processes do not converge in the usual Skorokhod-$J_1$ topology, hence we introduce a new notion of convergence based on shifted versions of these processes. We also introduce some useful martingales to identify limiting fairness processes under different routing policies. 
\end{abstract}

\section{Introduction}

Many server queues have been the subject of much research due to their applicability in large scale service systems, especially in call centers. Exact analysis of many-server systems is generally intractable and one resorts to approximation methods relying on functional strong law of large numbers and functional central limit theorems. In this work, we adopt the scaling introduced in the seminal paper of Halfin and Whitt~\cite{halfinwhitt81}, where they show that it is possible to achieve high quality of service along with high utilization of resources. This is achieved by setting the number of servers to  what is required to stabilize the system, generally referred as offered load plus an amount proportional to the square root of the offered load. 

The conventional analysis in the literature following~\cite{halfinwhitt81} focuses on either identical servers or servers classified in finitely many pools where servers are identical within each pool. However, in many real world applications the servers are humans who have inherently different abilities and serve with different rates which depends on the individual abilities, mood and health of the person. Hence, in the modeling process, each server requires individual attention and this generally results in either loss of Markov property and/or explosion of the state space dimensions. 

A series of papers by Atar and his colleagues~\cite{atar08,atarschwartzshaki11, atarschwartz08} tackle this individuality problem by assuming that service rates of  servers in the systems are independent and identically distributed (henceforth referred as i.i.d.) random variables that are realized at the beginning and constant through time but may not be available to the system controller. Atar~\cite{atar08} investigates two routing policies, namely longest-idle-server first and fastest server first, in an \emph{ad hoc} manner. Under these two policies Atar~\cite{atar08} show that the many-server systems with random service rates can be approximated by one-dimensional diffusion processes with a random drift coefficient, which has the same structure for both policies,  and a mean reversion coefficient which depends on the routing policy. Our setting in this paper is similar to the i.i.d.\ service rate setting of the work of Atar and his colleagues. The main difference is that we develop a general framework relying on probability measure-valued processes without assuming a particular routing policy and propose a generic representation for the parameters of the limiting diffusion. Technically speaking, Atar~\cite{atar08} adopts a  Riemann-type approach in the analysis by dividing the support of the random service rates into small intervals and  shows convergence as these intervals become finer. In contrast, ours is more of a Lebesgue-type analysis where we consider Borel subsets of the support of service rates and introduce a measure relying on how the total idleness is distributed among servers with different service rates. This allows us to treat the  general case without assuming a specific routing policy.

Another important feature omitted in the aforementined literature with i.i.d.\ service rates is the consideration of customer abandonments. Garnett et al.~\cite{Garnett2002} show that abandonments plays an important role in the design of many-server systems. We believe that abandonment behavior is particularly important when service rate uncertainty is present. In the Halfin-Whitt scaling as introduced in~\cite{atar08}, there is always a positive probability that the total service capacity is less than the offered load and hence, the queueing system is unstable. This is also reflected in the diffusion limits derived in~\cite{atar08} as the probability of the drift coefficient being positive, which makes a thorough steady-state analysis of the system impossible. 

The key concept in  our analysis is the probability measure-valued fairness process which is a left-continuous process indicating the proportion of total cumulative idleness experienced by servers belonging to a certain set. To  have a well-defined fairness process,  this process assumes an arbitrary probability measure as a constant value until some of the servers in the system experiences idleness and is equal to the self-normalized cumulative idleness process after the total cumulative idleness is positive. The point of singularity where the system experiences idleness for the first time raises difficulties in the analysis. This singularity point is the only possible discontinuity of the measure-valued processes and it can be shown that the fairness processes do not converge in any of the four topologies introduced by Skorokhod~\cite{sko56}. To overcome this issue, we define a new notion of convergence based on the shifted versions of the fairness process around this singularity point. We then relate this new notion of convergence to the convergence of Radon measure-valued cumulative idleness processes (before the normalization) in the usual Skorokhod-$J_1$ topology. We demonstrate the use of this result by deriving limiting fairness measures for priority-based policies such as fastest-server-first and slowest-server-first. We then show that the policy dependent parameter of the diffusion limits derived in~\cite{atar08} is the expected value of a random variable following the probability measure given by the fairness process. 

As the next step, we focus on the problem of identifying the fairness process for specific policies. We introduce a sequence of martingales related to the total cumulative idleness experienced by the servers belonging to a given set. We show that under some mild conditions these martingales converge in probability to 0 and this provides us an alternative characterization of the weak limit of the cumulative idleness process. Then, we use this alternative characterization to show that the policy dependent parameter identified by Atar~\cite{atar08} for the longest-idle-server-first policy is the same for a much more general class of policies, which we call totally blind policies. We also provide explanation for the structure of this parameter relating it to the basic properties of minimum of independent exponential random variables. 

\subsection{Notation}

We denote the set of real numbers, set of positive real numbers and the set of positive integers as $\Rbb, \Rbb_{+}$ and $\Nbb$, respectively. To simplify notation, for any $a,b\in\Rbb$, we define 
\[a\wedge b:=\min\{a,b\} \mbox{ and }a\vee b:=\max\{a,b\}.\]
For any separable metric space $\Xbb$, $\CalB(\Xbb)$ denotes the set of all Borel subsets of $\Xbb$. We assume that all the random elements defined in this paper lies in the generic probability space $(\Omega, \CalF, \Pbb)$,  unless stated otherwise. We also define $\calP$ to denote the space of probability measures defined on $(\Rbb_{+}, \CalB(\Rbb_+))$ equiped with Prokhorov metric~\cite{bil99}. We denote $\Pbb^n\Rightarrow \Pbb$ if probability measures converge with respect to Prokhorov metric, which is equivalent to weak convergence of these probability measures. With a slight abuse of notation, we denote $X^n\Rightarrow X$, if the probability laws of the random elements $X^n$ weakly converge to the probability law of $X$. We denote the $\sigma$-field generated by any random element $X$ as $\sigma(X)$.

We denote the spaces of continuous, bounded continuous, right-continuous and left-continuous functions that map interval $[a,b]\subset \Rbb$ to $\Xbb$ as $C_\Xbb[a,b], C_\Xbb^b[a,b],  D_\Xbb[a,b]$ and $G_\Xbb[a,b]$, respectively. The space $C_\Xbb[a,b]$ is equipped with the topology of uniform convergence, i.e., the supremum norm, and the spaces $D_\Xbb[a,b]$ is equipped with Skorokhod-$J_1$ topology and $d_S$ denote the usual Skorokhod-$J_1$metric. If $b<\infty$ and $f(t)\in G_\Xbb[a,b]$, then $g_f(t):=f(b-t)\in D_\Xbb[a-b, 0]$ and we define the Skorokhod-$J_1$ metric on $G_\Xbb[a,b]$ as $d_S'(f_1,f_2):=d_S(g_{f_1}, g_{f_2})$ for any $f_1,f_2\in G_\Xbb[a,b]$. Following Whitt~\cite{Whitt1970}, for $f_n,f\in D_\Xbb[a,\infty)$ $f_n\to f$ in Skorokhod-$J_1$ topology if and only if for any $T>a$, the modifications of these functions on $D_{\Xbb}[a,T]$ converge in Skorokhod-$J_1$ topology. The topology on $G_\Xbb[a,\infty)$ is defined in a similar manner. For any function $f:[0,T]\to \Rbb$, we also define 
\[
|f|_{*,T}:=\sup_{0\leq t\leq T}|f(t)|, f^+(t):=\max\{f(t),0\} \mbox{ and }  f^-(t):=\max\{-f(t),0\}.
\]
For functions $f:\Xbb\to \Rbb$ and measure $\zeta$ defined on $(\Xbb, \CalB(\Xbb))$, we use the inner-product notation for the integral
\[
\llangle[\big]f,\zeta\rrangle[\big]:=\int_{\Xbb}f(x)d\zeta(x).
\] 
Defining the identity function $\iota:\Rbb\to \Rbb$ such that $\iota(x)=x$ for all $x\in\Rbb$, the  expected value of a random variable following the distribution $\zeta\in \calP$ can be denoted as $\llangle[\big]\iota,\zeta\rrangle[\big]$.

\section{Related Literature}
This work is a part of the literature initiated by the seminal work of Halfin and Whitt~\cite{halfinwhitt81}, where they develop diffusion approximations by scaling arrival rates along with the number of exponential servers. This work has important practical impications as it shows both quality of service and efficiency of the system can be achieved by using the so-called square-root safety-staffing, i.e., setting the staffing level to be the number required to stabilize the incoming load plus a multiple of the square root of this number. The main line of research in this direction assumes that all the servers are identical, i.e., the service times of customers follow an exponential distribution with a known rate independent of the chosen server. However, in most cases the servers are humans with inherently different abilities and serve customers at different rates. Gans et al.~\cite{ganlmsy10} provide a thorough numerical analysis of how the server heterogeneity affects the performance based on real call center data. 

A common approach in modeling server heterogeneity is to assume that the servers can be grouped as ``server pools'' and that service rate can vary between pools, but all servers are identical within the same pool and serve with the same rate (see, e.g. \cite{armony05}). We believe that there are several limitations of this approach. First, this approach does not explicity model the inherent individual differences between servers due to human nature. Second, in general the rate of service in each pool are assumed to be known, or at the very least the pool that each server belongs to is known, which is not always possible in practice. We believe that these limitations can be remedied by modeling the service rates as random variables following the same distribution. 

In the last decade, there has been some research effort to analyze queueing systems with random parameters. Following the methodology introduced by Harrison and Zeevi~\cite{harzee05}, fluid limits are used in conjuction with stochastic programming to characterize how queueing systems should be designed and controlled under parameter uncertainty. Much of this research concentrates on arrival rate uncertainty due to forecasting errors~\cite{basharzee:10,zanhasmor18}. Indeed, Bassamboo et al.~\cite{basharzee:10} show that if the coefficient of variation of random arrival rate is greater than a certain threshold, fluid limits for the many-server systems yield more reliable approximations compared with the diffusion limits. The literature on the uncertainty related to service is relatively few. In a recent paper, Ibrahim~\cite{ibrahim19} studies the problem of staffing many-server queueing systems when the actual number of servers is random using fluid limits.

Our current work is most closely related to the excellent paper by Atar~\cite{atar08}. In~\cite{atar08}, Atar develops diffusion limits for many-server systems with random service rates for two routing policies, namely longest-idle-server-first and fastest-server-first in an \emph{ad hoc} manner. To achieve this, he partitions the support of the random service rates into small intervals and approximates the system as an inverted-V system (as studied in Armony~\cite{armony05}) where each interval corresponds to a pool of servers. Under the stated routing policies, the inverted-V systems admit a one-dimensional diffusion limit for any finite number of pools, exhibiting a phenomenon  referred as state-space collapse in the queueing literature and as the  intervals become finer the inverted-V system and the system with random service rates converge together to the same limit. In a related paper, Atar and Shwartz~\cite{atarschwartz08} show that it is possible to efficiently operate a system with random service rates by sampling only a small portion of the service rates. Atar, Shaki and Shwartz~\cite{atarschwartzshaki11} suggest a blind policy, i.e., a policy which does not require the exact knowledge of service rates, to equalize the cumulative idleness experienced by servers. 

In this paper, we analyze the system with i.i.d.\ random service rates introduced in~\cite{atar08} without assuming any particular routing policy. To achieve this generality, we use stochastic processes assuming values in the space of probability measures. Measure-valued processes have been used to obtain fluid limits for single server queueing systems (e.g., \cite{decmoyal08,gropuhawil02,grorobzwart08}) and more recently for many-server queues by Kaspi and Ramanan~\cite{kasram11}, Kang and Ramanan~\cite{kangram10} and Reed and Talreja~\cite{reedtal15}. In a more recent paper, Kaspi and Ramanan~\cite{kasram13} also use measure-valued processes to come up with limits involving stochastic partial differential equations to many-server queueing systems. In this line of work, Ramanan and co-authors use the  number of customers who has a certain age in the service or queue to define the measure-valued processes. Different from the aforementioned work, we use a measure-valued process to keep track of the proportion of cumulative idleness experience by different servers in the system. This definition is motivated by the idleness-ratio used to design routing policies to control finite pool systems~\cite{gurwhitt09,wararm13}. 

\section{Dynamics of the System Processes}

In this work, we consider an infinite sequence of queueing systems, where the $n$th system has $n$ servers. The arrivals at the $n$th system occur according to a renewal process with rate $\lambda^n$. More specificallly, assuming that  $u_k^n$s are i.i.d.\ random variables with mean $1$ for each $k, n\in\Nbb$, the number of arrivals at the $n$th system by time $t\geq 0$ is
\[
A^n(t)=\sup\left\{k: \sum_{i=1}^k\frac{u_i^n}{\lambda^n}\leq t\right\}.
\]
Each customer can be served by any server in the system. However, the service time of each customer depends on the agent serving her/him and if the customer is served by agent $k$ $(1\leq k \leq n)$, the service time is distributed exponentially with rate $\tilde{\mu}^n_k$. The service rates $\tilde{\mu}_k^n$s are positive i.i.d.\ random variables with a general distribution $F(\mu)$ and  are constant through time for each server. We assume that $\bar{\mu}=\Ebb[\tilde{\mu}_k^n]$ and $\Ebb[(\tilde{\mu}_k^n)^2]<\infty$. We also assume the following Halfin-Whitt type heavy traffic condition between arrival and the expected service rates:
\begin{asm}
\label{asm:HW}
As $n\to \infty$, $n^{-1/2}(\lambda^n-n\bar{\mu})\to \hat{\lambda}$, where $\hat{\lambda}\in \Rbb$.
\end{asm}
Each customer has an i.i.d.\ exponential patience time with rate $\gamma$ and abandons the system if her/his service has not commenced until the patience time expires. Once the service starts, the customer stays in the system until the service is finished. The customers are served on a first-come-first-served basis and if more than one server is idle when a customer arrives, the customer is routed to a server according to a pre-specified service discipline. At this point, the only restrictions we impose on the service discipline are (i) it is non-idling, i.e., servers cannot stay idle if there is work in the system, (ii) it is non-anticipating, i.e., when the routing is done the server is chosen based on the information available so far and (iii) it is non-preemptive, i.e., once a service starts for a customer at a server, it continues at the same server until the service finishes.

For the $n$th system, the number of customers in the system at time $t$, the number of customers routed to the $k$th agent and the number of customers who completes service at agent $k$ by time $t$ are denoted $X^n(t), R_k^n(t)$ and $D_k^n(t)$, respectively. The idleness process for each agent, $I^n_k(t)$,  is defined to be 1 if the $k$th server in the $n$th system is idle at time $t$ and 0 otherwise. The idleness processes play a critical role in our analysis and can be related to the routing  and departure processes as
\[
I_k^n(t)=I_k^n(0)-R_k^n(t)+D_k^n(t), \text{ for all }t\geq 0, 1\leq k \leq n.
\]
 The routing process is a counting process and the customers are routed to server $k$ only when it is idle, which implies 
\[
(1-I_k^n(t-))dR_k^n(t)=0, \mbox{ for all $t\geq 0$ and $1\leq k\leq n$}. 
\]
The non-idling property of the policies can be expressed as
 \begin{equation}
  (X^n(t)-n)^-=\sum_{k=1}^n I_k^n(t), \forall t\geq 0, n\in \Nbb.
  \label{Eq:nonidling_basic}
 \end{equation}
Due to the exponential nature of service and patience times, the counting processes for service completions and abandonments can be modelled as time changes and/or thinings of homogeneous Poisson processes. Pang et al.~\cite{ptw07} provide a detailed overview of pathwise construction of  such processes. We take $\{S^n(t), t\geq 0\}$ to be Poisson processes with rate 1 and event epochs $\{\theta_i^n\}$. We also take a sequence of independent uniform(0,1) random variables, $U_i^n$, and define the sequence of random variables
\[
\kappa_i^n:= \min\left\{ k: \left(\sum_{j=1}^{n}\tilde{\mu}_j^n\right)U_i^n \leq \sum_{j=1}^{k}\tilde{\mu}_j^n\right\}, \mbox{ for all }i,n\in \mathbb{N}.
\]
To construct the departure process for each server, we first consider $\{S_P^n(t)=S(\sum_{i=1}^n\tilde{\mu}_k^nt), t\geq 0\}$, a time change of the standard Poisson process, to act as the potential service completion process for the $n$th system, i.e., the event epochs of this process are the potential candidates for the actual service completion times. Using splitting property of Poisson processes, if $\kappa_i^n=k$, we treat the $i$th event of the potential service completion process $S_P^n$ occuring at time $\theta_i^n$ as a potential service completion by server $k$ and define $\{S_{P,k}^n(t)=\sum_{i=1}^{S_P^n(t)} \delta_k(\kappa_i^n), t\geq 0\}$ as the potential service completion process by server $k$ in the $n$th system. To convert the potential times to actual service completion times, we need to apply another splitting by checking whether server $k$ is busy right before a potential service completion time. Hence, we represent the service completion process for server $k$ as
\[
D_k^n(t)=\int_0^t (1-I_k^n(s-))dS_{P,k}^n(s)=\sum_{i=1}^{S_P^n(t)}(1-I_k^n(\theta_i^n-))\delta_{k}(\kappa_i^n), t\geq 0.
\]
Similarly, taking $\{N^n(t), t\geq 0\}$ to be Poisson processes with rate 1, independent of other processes defined so far, we can represent the abandonment process for the $n$th system as 
 \[
  \left\{N^n\left(\gamma\int_0^t(X^n(s)-n)^+ds\right), t\geq 0\right\}.
 \]
 
 Now, we can write the dynamics of the system length process $\{X^n(t), t\geq 0\}$ as
 \[
  X^n(t)=X^n(0)+A^n(t)-\sum_{k=1}^nD_k^n(t)-N^n\left(\gamma\int_0^t(X^n(s)-n)^+ds\right), \forall t\geq 0, n\in \Nbb.
 \]
  We define the scaled system length process $\hat{X}^n(t):=\frac{X^n(t)-n}{\sqrt{n}}$ and after some simple algebraic manipulations, we get
  \begin{align}
  \begin{split}
  \hat{X}^n(t) &=\hat{X}^n(0) + \frac{A^n(t)-\lambda_n}{\sqrt{n}}+\frac{\lambda_n t-n\bar{\mu} t}{\sqrt{n}}-  \frac{\sum_{k=1}^n D_k^n(t)-\tilde{\mu}_k^n\int_0^t(1-I_k^n(s))ds}{\sqrt{n}}-\sum_{k=1}^n\frac{\tilde{\mu}_k^n\int_0^tI_k^n(s)ds}{\sqrt{n}}\\
  &\quad+\sum_{k=1}^n\frac{\tilde{\mu}_k^n t-\bar{\mu} t}{\sqrt{n}}-\frac{N^n\left(\gamma\int_0^t(X^n(s)-n)^+ds\right)-\gamma\int_0^t(X^n(s)-n)^+ds}{\sqrt{n}}-\frac{\gamma\int_0^t(X^n(s)-n)^+ds}{\sqrt{n}}.
  \end{split}
  \label{Eq:scaled_system}
  \end{align}
 We have the following assumption on  the initial conditions $X^n(0)$:
 \begin{asm}
 \label{asm:initial}
 $\hat{X}^n(0)=\displaystyle \frac{X^n(0)-n}{\sqrt{n}}\Rightarrow \xi_0$.
 \end{asm}
We also define the filtrations $\mathbf{F}^n=\{\CalF_t^n: t\geq 0\}$ as
\[
\CalF_t^n:=\sigma\left(\{\tilde{\mu}_k^n\}_{k=1}^n, X^n(0), \{U_i^n\}_{i=1}^{S_P^n(t)}, A^n(s), S_P^n(s), \{I_k^n(s)\}_{k=1}^n, N^n\left(\gamma\int_0^s(X^n(u)-n)^+du\right):0\leq s\leq t\right) 
\]
for all $t\geq 0$. We are now ready to further analyze the idleness process and define the fairness process. 
  
 \subsection{Analysis of the Total Idleness Process}  
  
 In this section, we analyze the scaled versions of the total idleness process which we define as
 \[
 \bar{I}^n(t):=\frac{\sum_{k=1}^n I_k^n(t)}{n}, \hat{I}_k^n(t):=\frac{I_k^n(t)}{\sqrt{n}}, \text{ for all } 1\leq k\leq n \text{ and }\hat{I}^n(t):=\sum_{k=1}^n \hat{I}_k^n(t), t\geq 0.
 \]
To be able to manipulate the system equations and define the fairness process properly, we need to understand how the total idleness behaves. We prove that the total idleness experienced by time $t$, scales with $\sqrt{n}$ as $n$ increases by showing the tightness of the appropriately scaled maximum number of idle servers.
 
 \begin{lemma}\label{lem:tightness_idle}
  For any fixed $T>0$ and nonidling policy, $\left\{|\hat{I}^n|_{*,T}\right\}_{n\in\Nbb}$ is tight. 
  \end{lemma}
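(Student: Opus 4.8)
The plan is to collapse the statement onto the negative part of the diffusion-scaled queue length and then dominate that quantity by the oscillations of the underlying renewal and Poisson inputs. By the non-idling identity~\eqref{Eq:nonidling_basic}, $\sum_{k=1}^n I_k^n(t)=(X^n(t)-n)^-$, so $\hat{I}^n(t)=\frac{(X^n(t)-n)^-}{\sqrt n}=\bigl(\hat{X}^n(t)\bigr)^-$ and hence $|\hat{I}^n|_{*,T}=\sup_{0\le t\le T}\bigl(\hat{X}^n(t)\bigr)^-$. Since tightness of a family of nonnegative random variables is the same as stochastic boundedness, it suffices to produce, for each $\epsilon>0$, a constant $M$ with $\sup_n\Pbb\bigl(\sup_{0\le t\le T}(\hat{X}^n(t))^->M\bigr)<\epsilon$.

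First I would derive a pathwise upper bound by an excursion argument. Fix $t$ with $X^n(t)<n$ and set $s_0:=\sup\{s\le t: X^n(s)\ge n\}$, with $s_0:=0$ when this set is empty. On $(s_0,t]$ the queue length stays below $n$, so $(X^n(s)-n)^+\equiv 0$ and no abandonment can occur there; the only events changing $X^n$ are arrivals and genuine service completions. As the actual departures form a thinning of the potential service-completion process, their number on $(s_0,t]$ is at most $S_P^n(t)-S_P^n(s_0)$. Since the integer process $X^n$ moves by unit jumps we have $X^n(s_0)\ge n-1$ (or the initial value when $s_0=0$), which gives
\begin{equation*}
(X^n(t)-n)^-\;\le\;1+(X^n(0)-n)^-+\sup_{0\le s\le t}\Bigl\{\bigl[S_P^n(t)-S_P^n(s)\bigr]-\bigl[A^n(t)-A^n(s)\bigr]\Bigr\}.
\end{equation*}
Taking the supremum over $t\in[0,T]$ and dividing by $\sqrt n$ reduces the claim to the tightness of a doubly-indexed supremum of the centered inputs (the leading $1/\sqrt n$ and the term $(\hat{X}^n(0))^-$, tight by Assumption~\ref{asm:initial}, being harmless).

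To control that supremum I would center both processes around their compensators. Writing $\bar{\Lambda}^n:=\sum_{k=1}^n\tilde{\mu}_k^n$, the increments telescope, so
\begin{equation*}
\sup_{0\le s\le t\le T}\Bigl\{[S_P^n(t)-S_P^n(s)]-[A^n(t)-A^n(s)]\Bigr\}\le 2\sup_{0\le u\le T}\bigl|S_P^n(u)-\bar{\Lambda}^n u\bigr|+2\sup_{0\le u\le T}\bigl|A^n(u)-\lambda^n u\bigr|+|\bar{\Lambda}^n-\lambda^n|\,T.
\end{equation*}
After dividing by $\sqrt n$, the first term is stochastically bounded by the functional CLT for the rate-$\bar{\Lambda}^n$ Poisson process $S_P^n$ together with $\bar{\Lambda}^n/n\to\bar{\mu}$ a.s.\ (strong law for the i.i.d.\ rates); the second by the functional CLT for the renewal arrival process (using the finite variance of the interarrival times); and $\frac{1}{\sqrt n}(\bar{\Lambda}^n-\lambda^n)=\frac{1}{\sqrt n}\sum_{k=1}^n(\tilde{\mu}_k^n-\bar{\mu})-\frac{1}{\sqrt n}(\lambda^n-n\bar{\mu})$ converges in distribution by the CLT for the rates (finite second moment) and Assumption~\ref{asm:HW}. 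A finite sum of stochastically bounded families is stochastically bounded, which closes the argument.

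The main obstacle is that the excursion start $s_0$ is a random, history-dependent time, so one cannot simply take expectations over a fixed interval; passing to the supremum over all subintervals $[s,t]\subseteq[0,T]$ removes this dependence but demands uniform-in-$(s,t)$ control of the centered driving processes, which is exactly what the functional CLTs and the a.s.\ convergence $\bar{\Lambda}^n/n\to\bar{\mu}$ supply. A secondary subtlety is that the random service rates enter both the time change of the potential-completion process and the drift $\bar{\Lambda}^n-\lambda^n$, so these contributions are best handled conditionally on $\{\tilde{\mu}_k^n\}_{k=1}^n$ (equivalently, through the joint law of large numbers and CLT for the rates).
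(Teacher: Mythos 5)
Your proof is correct, but it takes a genuinely different route from the paper's. The paper substitutes the full system equation into $(X^n(t)-n)^-$ and therefore has to carry the abandonment process along: the compensated abandonment term is a martingale that is controlled via the Lenglart--Rebolledo inequality, and its quadratic variation forces the authors to prove tightness of $n^{-1/2}\int_0^T(X^n(s)-n)^+ds$, which they do by decomposing the upper-excursion set $\{s\colon X^n(s)>n\}$ into intervals $[\alpha_i^n,\beta_i^n)$ and bounding $X^n$ on each excursion by the centered primitive processes. You sidestep all of this with a last-exit-time observation: on $(s_0,t]$, with $s_0$ the last time $X^n$ was at or above $n$, one has $(X^n(s)-n)^+\equiv 0$, so the clock of $N^n\bigl(\gamma\int_0^{\cdot}(X^n(s)-n)^+ds\bigr)$ is frozen and abandonments cannot contribute to the bound at all; combined with $X^n(s_0)\ge n-1$ and the thinning bound on departures, this reduces the lemma to oscillation control of the centered $A^n$ and $S_P^n$ plus the drift gap $n^{-1/2}\bigl|\sum_{k=1}^n\tilde{\mu}_k^n-\lambda^n\bigr|$, handled by exactly the FCLT/CLT facts the paper also invokes. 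Your argument is shorter and more elementary --- no martingale inequality is needed, and only one (backward) excursion time appears instead of a countable decomposition. What the paper's longer detour buys is the by-product tightness of $n^{-1/2}\bigl|\int_0^t(X^n(s)-n)^+ds\bigr|_{*,T}$, which is reused immediately (Corollary~\ref{cor:fluidtight}) and again in Theorem~\ref{thm:conv_system} to show $\hat{M}_2^n\toP 0$; on your route those later facts would require a separate argument. Two routine points you should make explicit in a write-up: the supremum $\sup_{0\le s\le t}\{[S_P^n(t)-S_P^n(s)]-[A^n(t)-A^n(s)]\}$ is nonnegative (take $s=t$), so your additive bound is valid in both the $s_0>0$ and $s_0=0$ cases; and $X^n(s_0)\ge n-1$ relies on the paths being right-continuous with left limits and on the downward jumps being of unit size almost surely (the driving Poisson processes have unit jumps and simultaneous events form a null set).
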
   
  \begin{proof}
 We need to prove that for any $\epsilon>0$, there exists a $K_\epsilon>0$, such that 
 \[
  \Pbb(|\hat{I}^n|_{*,T}>K_\epsilon)<\epsilon, \text{ for all }n\in\Nbb.
 \]
 Using \eqref{Eq:nonidling_basic}, 
   \begin{align*}
 \sum_{k=1}^nI^n_k(t) &=\left(X^n(0)-n+A^n(t)- \sum_{k=1}^nD_k^n(t)-N^n\left(\gamma\int_0^t(X(s)-n)^+ds\right)\right)^-\\
 &\leq \left(X^n(0)-n+A^n(t)- \sum_{k=1}^n S_{P,k}^n\left(\tilde{\mu}_k t\right)-N^n\left(\gamma\int_0^t(X(s)-n)^+ds\right)\right)^-,
 %&=\left(X^n(0)-n+A^n(t)-\lambda_nt- \left(S\left(\sum_{k=1}^n\mu_k t\right)-\sum_{k=1}^n\mu_kt\right)-\sum_{k=1}^n\mu_kt+\lambda_nt\right.\\
 %&\quad\quad\left.-\left(N^n\left(\int_0^t(X(s)-n)^+ds\right)-\int_0^t(X(s)-n)^+ds\right)-\int_0^t(X(s)-n)^+ds\right)^-,\\
   \end{align*}
  which implies
   \begin{align}
   \begin{split}
   |\hat{I}^n(t)|_{*,T}& \leq \left|\frac{X^n(0)-n}{\sqrt{n}}\right| +\left|\frac{A^n(t)-\lambda_nt}{\sqrt{n}}\right|_{*,T}+\left|\frac{ (\sum_{k=1}^nS_{P,k}^n(t)-\sum_{k=1}^n\tilde{\mu}_kt)}{\sqrt{n}}\right|_{*,T}
   +\left|\frac{\sum_{k=1}^n\tilde{\mu}_kt-\lambda_nt}{\sqrt{n}}\right|_{*,T}\\&\quad+\left|\frac{N^n(\gamma\int_0^t(X^n(s)-n)^+ds)-\gamma\int_0^t(X^n(s)-n)^+ds}{\sqrt{n}}\right|_{*,T} + \left|\frac{\gamma\int_0^t(X^n(s)-n)^+ds}{\sqrt{n}}\right|_{*,T}.\label{eq:Itight}
   \end{split}
   \end{align}
 %  Then, using the simple subadditivity of probability, we can write
 %  \begin{align*}
 %  \Pbb\left(|\hat{I}(t)|_{*,T}>5K\right)& \leq \Pbb\left(\sup_{0\leq t \leq T}|\frac{X^n(0)-n}{\sqrt{n}}|>K\right) +\Pbb\left(\sup_{0\leq t \leq T}|\frac{A^n(t)-\lambda_nt}{\sqrt{n}}|>K\right)\\&\quad+\Pbb\left(\sup_{0\leq t \leq T}|\frac{ (S(\sum_{k=1}^n\mu_k t)-\sum_{k=1}^n\mu_kt)}{\sqrt{n}}|>K\right)
 %  +\Pbb\left(\sup_{0\leq t \leq T}|\frac{\sum_{k=1}^n\mu_kt-\lambda_nt}{\sqrt{n}}|>K\right)\\&\quad+\Pbb\left(\sup_{0\leq t \leq T}|\frac{N^n(\int_0^t(X(s)-n)^+ds)}{\sqrt{n}}|>K\right).
 %  \end{align*}
   Assumptions~\ref{asm:HW} and~\ref{asm:initial}, and the functional central limit theorem for renewal processes guarantees that the first four terms on the righthand side of~\eqref{eq:Itight} are tight. The fifth term corresponds to the supremum of a martingale with predictable quadratic variation 
   \[
  \left\langle\frac{N^n(\gamma\int_0^t(X^n(s)-n)^+ds)-\gamma\int_0^t(X^n(s)-n)^+ds}{\sqrt{n}}\right\rangle=\frac{\gamma\int_0^t(X^n(s)-n)^+ds}{n}.
   \]
 Hence, using Lenglart-Rebolledo inequality (c.f.  Lemma 5.7 in \cite{ptw07}), the tightness of the last two terms in~\eqref{eq:Itight} are spontaneously proved once we show the tightness of 
 \[
  \left|\frac{\int_0^t(X^n(s)-n)^+ds}{\sqrt{n}}\right|_{*,T}.
 \]
 To do so, we define $L^n:=\{0\leq s\leq T:X^n(s)-n>0\}$. The paths of $X^n(\cdot)$ are in $\Dbb_{\Rbb}[0,\infty)$ and hence, we can express $L^n$ as the union of disjoint intervals as $L^n=\bigcup_{i=1}^\infty[\alpha_i^n, \beta_i^n)$.
   \[
   \frac{\int_0^T(X^n(s)-n)^+ds}{\sqrt{n}}=\frac{\int_{L^n}(X^n(s)-n)ds}{\sqrt{n}}=\frac{\sum_{i=1}^\infty\int_{\alpha_i^n}^{\beta_i^n}(X^n(s)-n)ds}{\sqrt{n}}.
   \]
   Note that each $\alpha_i^n$ is the start of a busy period. Hence, if $s\in[\alpha_i^n, \beta_i^n)$
   \[
   X^n(s)\leq X^n(\alpha_i^n)+A^n(s)-A(\alpha_i^n)- S_P^n(s)+S_P^n(\alpha_i^n).
  % &\leq X(\alpha_i^n)+A(s)-A(\alpha_i^n)- S\left(\sum_{i=1}^n\mu_k(s-\alpha_i^n)+T_k(\alpha_i^n)\right)+S\left(\sum_{i=1}^nT_k(\alpha_i^n)\right)\\
  \]
   Hence, 
   \begin{align*}
   \frac{\int_0^T(X(s)-n)^+ds}{\sqrt{n}}&\leq \sum_{i=1}^\infty\frac{X(\alpha_i^n)-n}{\sqrt{n}}+\sum_{i=1}^\infty\left(\frac{\int_{\alpha_i^n}^{\beta_i^n}\left(A(s)-A(\alpha_i^n)-\lambda^n(s-\alpha_i^n)\right)ds}{\sqrt{n}}\right)\\
   &\quad- \sum_{i=1}^\infty\int_{\alpha_i^n}^{\beta_i^n}\frac{\left(S_P^n(s)-S_P^n(\alpha_i^n)-\sum_{k=1}^n\tilde{\mu}_k^n(s-\alpha_i^n)\right)+(\lambda^n-\sum_{k=1}^n\tilde{\mu}_k^n)(s-\alpha_i^n)}{\sqrt{n}}ds\\
   &\leq T\frac{\max\{X^n(0)-n, 1\}}{\sqrt{n}}+2T\sup_{0\leq s\leq T}\left\{\frac{|A^n(s)-\lambda^ns|}{\sqrt{n}}\right\}\\&\quad+2T\sup_{0\leq s\leq T}\left\{\frac{|S_P^n(s)-\sum_{k=1}^n\tilde{\mu}_k^ns|}{\sqrt{n}}\right\}+\frac{T|\lambda^n-\sum_{k=1}^n\tilde{\mu}^n_k|}{\sqrt{n}}.
   \end{align*}
   Using the functional central limit theorem for Poisson and renewal processes and continuous mapping theorem along with the fact that supremum is continuous in $J_1$ topology, the right-hand side converges weakly and hence is tight. This proves $\left\{|\hat{I}^n|_{*,T}\right\}$ is tight.
  \end{proof}
  \begin{cor}\label{cor:fluidtight}
   $|\bar{I}^n|_{*,T}\stackrel{p}{\rightarrow} 0$ and $\displaystyle\left|\frac{\int_0^t(X^n(s)-n)^+ds}{n}\right|_{*,T}\stackrel{p}{\rightarrow} 0$.
   \end{cor}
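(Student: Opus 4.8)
The plan is to derive both convergences directly from the stochastic boundedness established in (and during the proof of) Lemma~\ref{lem:tightness_idle}, using the elementary fact that a tight sequence of nonnegative random variables divided by a deterministic sequence tending to infinity converges to $0$ in probability. I would first record the exact scaling identities linking the two quantities in the corollary to quantities already shown to be tight, and then apply this ``stochastic boundedness over $\sqrt{n}$'' argument in both cases.

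For the first statement, I would rewrite $\bar I^n$ in terms of $\hat I^n$. Since $\hat I^n(t)=\sum_{k=1}^n \hat I_k^n(t)=\tfrac{1}{\sqrt n}\sum_{k=1}^n I_k^n(t)$ while $\bar I^n(t)=\tfrac{1}{n}\sum_{k=1}^n I_k^n(t)$, we have the pathwise identity $\bar I^n(t)=\tfrac{1}{\sqrt n}\hat I^n(t)$ for every $t$, and hence $|\bar I^n|_{*,T}=\tfrac{1}{\sqrt n}|\hat I^n|_{*,T}$. Fixing $\epsilon>0$ and $\delta>0$, Lemma~\ref{lem:tightness_idle} supplies a $K_\delta$ with $\Pbb(|\hat I^n|_{*,T}>K_\delta)<\delta$ for all $n$. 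Then for all $n$ large enough that $\sqrt n\,\epsilon\ge K_\delta$ we obtain $\Pbb(|\bar I^n|_{*,T}>\epsilon)=\Pbb(|\hat I^n|_{*,T}>\sqrt n\,\epsilon)\le \Pbb(|\hat I^n|_{*,T}>K_\delta)<\delta$, and letting $\delta\downarrow 0$ yields $|\bar I^n|_{*,T}\toP 0$.

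For the second statement, the same reasoning applies once I observe the identity $\bigl|\tfrac{\int_0^t(X^n(s)-n)^+ds}{n}\bigr|_{*,T}=\tfrac{1}{\sqrt n}\bigl|\tfrac{\int_0^t(X^n(s)-n)^+ds}{\sqrt n}\bigr|_{*,T}$. The key point is that the proof of Lemma~\ref{lem:tightness_idle} already establishes, as an intermediate step, that $\bigl|\tfrac{\int_0^t(X^n(s)-n)^+ds}{\sqrt n}\bigr|_{*,T}$ is tight; indeed it is dominated there by a nonnegative sequence that converges weakly via the functional central limit theorem and the continuous mapping theorem. Dividing this tight sequence by $\sqrt n$ and repeating the $\epsilon$--$\delta$ estimate above gives convergence to $0$ in probability.

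The argument involves no genuine obstacle; the only point requiring care is that the tightness of the integral term is not the formally stated conclusion of Lemma~\ref{lem:tightness_idle} but rather a lemma used \emph{inside} its proof, so I would reference that intermediate bound explicitly rather than the lemma statement itself. Everything else reduces to the standard fact that $c_n Y_n\toP 0$ whenever $\{Y_n\}$ is stochastically bounded and $c_n\to 0$.
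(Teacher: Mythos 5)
Your proposal is correct and follows essentially the same route as the paper: the paper's proof also reduces both claims to the tightness results via the identity $\Pbb(|\bar{I}^n|_{*,T}>\epsilon)=\Pbb(|\hat{I}^n|_{*,T}>\sqrt{n}\,\epsilon)\to 0$, treating the second claim ``similarly'' by dividing the tight sequence $\bigl|\int_0^t(X^n(s)-n)^+ds/\sqrt{n}\bigr|_{*,T}$ (established inside the proof of Lemma~\ref{lem:tightness_idle}, exactly as you note) by a further factor of $\sqrt{n}$. Your only addition is making explicit the $\epsilon$--$\delta$ bookkeeping and the citation of the intermediate bound, both of which the paper leaves implicit.
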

   \begin{proof}
   For any $\epsilon>0$, $\Pbb(|\bar{I}^n|_{*,T}>\epsilon)=\Pbb(|\hat{I}^n|_{*,T}>\sqrt{n}\epsilon)\to 0$. The proof of the second claim is similar. 
   \end{proof}
   The proof of the following lemma is similar in nature to that of Lemma 3.1 in \cite{atar08}. 
   \begin{lemma}\label{lem:cumuleffort} For any nonidling policy we have
   \[
   \sup_{0\leq t\leq T}\frac{\sum_{k=1}^n\mu_k\int_0^t I_k(s)ds}{n}\stackrel{p}{\rightarrow} 0.
   \]
   for any given $T$. 
   \end{lemma}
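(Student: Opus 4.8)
The plan is to drop the supremum by monotonicity and then control the rate-weighted idleness by truncating the service rates, leaning on Corollary~\ref{cor:fluidtight}. Reading the $\mu_k,I_k$ in the statement as $\tilde\mu_k^n,I_k^n$, note that each $I_k^n(s)\in\{0,1\}$ and $\tilde\mu_k^n\ge 0$, so $t\mapsto\sum_{k=1}^n\tilde\mu_k^n\int_0^t I_k^n(s)\,ds$ is nondecreasing and its supremum over $[0,T]$ is attained at $t=T$. It therefore suffices to show $\frac1n\sum_{k=1}^n\tilde\mu_k^n\int_0^T I_k^n(s)\,ds\toP 0$. The difficulty is that the crude bound $\tilde\mu_k^n\le\max_k\tilde\mu_k^n$ is useless here: with only finite moments the maximum of $n$ i.i.d.\ rates can be of order $\sqrt n$, and multiplying by the $O_p(1/\sqrt n)$ idleness coming from Corollary~\ref{cor:fluidtight} leaves an $O_p(1)$ quantity. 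The rates must be handled in aggregate rather than through a uniform bound.

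First I would fix a truncation level $C>0$ and split
\begin{align*}
\frac1n\sum_{k=1}^n\tilde\mu_k^n\int_0^T I_k^n(s)\,ds
&=\frac1n\sum_{k:\,\tilde\mu_k^n\le C}\tilde\mu_k^n\int_0^T I_k^n(s)\,ds\\
&\quad+\frac1n\sum_{k:\,\tilde\mu_k^n> C}\tilde\mu_k^n\int_0^T I_k^n(s)\,ds.
\end{align*}
For the low-rate part, bounding $\tilde\mu_k^n\le C$ and invoking the non-idling identity~\eqref{Eq:nonidling_basic} gives
\[
\frac1n\sum_{k:\,\tilde\mu_k^n\le C}\tilde\mu_k^n\int_0^T I_k^n(s)\,ds
\le \frac{C}{n}\int_0^T\sum_{k=1}^n I_k^n(s)\,ds
= C\int_0^T\bar I^n(s)\,ds
\le CT\,|\bar I^n|_{*,T},
\]
which for each fixed $C$ tends to $0$ in probability by Corollary~\ref{cor:fluidtight}. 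For the high-rate part, bounding $\int_0^T I_k^n(s)\,ds\le T$ yields
\[
\frac1n\sum_{k:\,\tilde\mu_k^n> C}\tilde\mu_k^n\int_0^T I_k^n(s)\,ds
\le \frac{T}{n}\sum_{k=1}^n\tilde\mu_k^n\,\mathbf{1}\{\tilde\mu_k^n> C\},
\]
and the weak law of large numbers for the i.i.d.\ rates gives $\frac1n\sum_{k=1}^n\tilde\mu_k^n\,\mathbf{1}\{\tilde\mu_k^n> C\}\toP \Ebb[\tilde\mu\,\mathbf{1}\{\tilde\mu> C\}]$, where $\tilde\mu$ denotes a generic rate with distribution $F$.

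To finish, given $\epsilon>0$ I would first choose $C$ large enough that $T\,\Ebb[\tilde\mu\,\mathbf{1}\{\tilde\mu> C\}]<\epsilon/2$, which is possible because $\Ebb[\tilde\mu]=\bar\mu<\infty$ forces $\Ebb[\tilde\mu\,\mathbf{1}\{\tilde\mu> C\}]\to 0$ as $C\to\infty$ by dominated convergence. With this $C$ fixed, the high-rate term exceeds $\epsilon$ with probability tending to $0$, while the low-rate term converges to $0$ in probability; a union bound then yields $\Pbb\big(\frac1n\sum_{k}\tilde\mu_k^n\int_0^T I_k^n(s)\,ds>2\epsilon\big)\to 0$, establishing the claim. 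The only point requiring care is the order of quantifiers: the level $C$ must be chosen before letting $n\to\infty$, since the low-rate bound degrades as $C$ grows, whereas the high-rate estimate improves; once $C$ is frozen the two limits are taken independently and combined. Note that only finiteness of the first moment of the service rate is actually used. (A martingale route is also possible, writing $\tilde\mu_k^n\int_0^t I_k^n(s)\,ds$ as the compensator of the missed potential completions $S_{P,k}^n(t)-D_k^n(t)$ and applying Lenglart--Rebolledo, but this forces one to also control $\hat X^n(T)$ through the system equation, which the truncation argument sidesteps entirely.)
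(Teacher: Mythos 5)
Your proof is correct, but it takes a genuinely different route from the paper. The paper's argument is a short Cauchy--Schwarz computation: writing $I_k^n(s)=\sqrt{I_k^n(s)}$ (since the idleness indicators are $\{0,1\}$-valued), it bounds
\[
\frac{1}{n}\sum_{k=1}^n\tilde\mu_k^n\int_0^T I_k^n(s)\,ds
\;\le\;\sqrt{\frac{1}{n}\sum_{k=1}^n(\tilde\mu_k^n)^2}\;\,T\,\sqrt{\frac{|\hat I^n|_{*,T}}{\sqrt n}},
\]
and concludes from the law of large numbers for $\frac1n\sum_k(\tilde\mu_k^n)^2$ --- using the standing assumption $\Ebb[(\tilde\mu_k^n)^2]<\infty$ --- together with the $\sqrt n$-scale tightness of Lemma~\ref{lem:tightness_idle}. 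Your truncation-plus-WLLN argument instead needs only the weaker fluid-scale statement $|\bar I^n|_{*,T}\toP 0$ of Corollary~\ref{cor:fluidtight} and only a finite first moment of the service rate, so it is more elementary in its moment requirements and correctly isolates the quantifier order ($C$ before $n$). What the paper's route buys in exchange is brevity and a quantitative bound: the Cauchy--Schwarz estimate is explicitly $O_p(n^{-1/4})$, whereas truncation gives convergence without a rate. One nuance in your motivating remark: under the paper's standing assumption $\Ebb[(\tilde\mu_k^n)^2]<\infty$ the crude bound you dismiss actually does work, since then $\max_{1\le k\le n}\tilde\mu_k^n=o_p(\sqrt n)$ (by $n\,\Pbb(\tilde\mu>\epsilon\sqrt n)\le \epsilon^{-2}\Ebb[\tilde\mu^2\mathbf{1}\{\tilde\mu>\epsilon\sqrt n\}]\to 0$), so multiplying by the $O_p(1/\sqrt n)$ idleness still vanishes; your objection is accurate only in the first-moment-only regime that your proof is designed for.
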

   \begin{proof}
   We know that $I_k(t)=\sqrt{I_k(t)}$. Using Cauchy-Schwarz inequality 
   \begin{align*}
   \sup_{0\leq t\leq T}\frac{\sum_{k=1}^n\mu_k\int_0^t I_k(s)ds}{n}&=\frac{\sum_{k=1}^n\mu_k\int_0^T I_k(s)ds}{n}\\
   &=\int_0^T \left(\sum_{k=1}^n \frac{\mu_k \sqrt{I_k(s)}}{n}\right)ds\\
   &\leq \int_0^T \left(\sqrt{\sum_{k=1}^n \frac{\mu^2_k}{n}}\sqrt{\sum_{k=1}^n\frac{I_k(s)}{n}}\right)ds\\
   &=\left(\sqrt{\sum_{k=1}^n \frac{\mu^2_k}{n}}\right) \int_0^T \sqrt{\frac{I(s)}{n}}ds\\
   %&\leq \left(\sqrt{\sum_{k=1}^n \frac{\mu^2_k}{n}}\right) \int_0^T \sqrt{\frac{\sup_{0\leq t\leq T}I(s)}{n}}ds\\
   &\leq \left(\sqrt{\sum_{k=1}^n \frac{\mu^2_k}{n}}\right) T\sqrt{\frac{|I|_{*,T}}{n}}
   \end{align*}
   
   Hence, our result follows from Lemma~\ref{lem:tightness_idle}. 
   \end{proof}
   
 Our key observation is that how the total idleness is distributed among different servers plays the key role in the analysis of many server queueing systems with random service rates. In the next section, we introduce the fairness process which keeps track of the distribution of total cumulative idleness among servers. 
   
   \subsection{The Fairness Process}
   
   In this section, we concentrate on the analysis of 
   \[
   \sum_{k=1}^n\frac{\tilde{\mu}_k^n\int_0^tI_k^n(s)ds}{\sqrt{n}},
   \]
   the fifth term on the righthand side of~\eqref{Eq:scaled_system}.  For any $\epsilon\geq 0$ and $n\in\Nbb$ we define the random times
   \[
    \tau_\epsilon^n := \inf\left\{t: \int_0^t \hat{I}^n(s)ds>\epsilon\right\}.
   \]
   Now, we are ready to define the measure-valued fairness process. Intuitively, the fairness process keeps track of how the total cumulative idleness is distributed among different servers in the system. Specifically, if the system has been idle for some time by $t$ (i.e., $t>\tau^n_0$), the fairness process, $\eta^n_t(A)$, represents the proportion of cumulative idleness experienced by servers whose service rates, $\tilde{\mu}_k^n$s, are in $A\in \CalB(\Rbb_+)$. When the cumulative experienced idleness is 0, this proportion is not well-defined, and we choose a probability measure $\zeta\in \calP$ as a placeholder and set $\eta^n_t(A)=\zeta(A)$ for any $t\leq\tau_0$ and $A\in \CalB(\Rbb_+)$. We also define the $\epsilon$-shifted version of the fairness process as the process which is equal to $\zeta$ when $t\leq \tau_\epsilon$ and is equal to $\eta^n_t$ for $t>\tau_\epsilon$. Formally, we define the fairness process for the $n$th system and its $\epsilon$-shifted version as
    \[
    \eta^{n}_t(A):=\left\{ \begin{array}{ll}
    \displaystyle \frac{\int_0^t\sum_{k=1}^n\delta_{\mu_k}(A)I_k(s)ds}{\int_0^t\sum_{k=1}^n I_k(s)ds} &\mbox{if }t>\tau_0^n\\
    \zeta(A) &\mbox{if }t\leq \tau_0^n
    \end{array}\right.,\]
    \begin{equation}
    \mathcal{S}^n_\epsilon\eta^{n}_t(A):=\left\{ \begin{array}{ll}
    \displaystyle \eta^n_t(A) &\mbox{if }t>\tau_\epsilon^n\\
    \zeta(A) &\mbox{if }t\leq \tau_\epsilon^n
    \end{array}\right..
    \label{eq:shift}
    \end{equation}
    The fairness process and its shifted versions assume values in the space of probability measures and have paths in $G_\calP[0,\infty)$. We are interested in the convergence of the fairness process in some sense. Unfortunately, the sequence of fairness measures is not tight in the Skorokhod-$J_1$ topology (or in any of the four topologies introduced in \cite{sko56}). Hence, we resort to the following idea of convergence relying on the shifted processes.
    \begin{dfn}\label{dfn:limitingprocess}
     Assume that $\tau_\epsilon^n\Rightarrow \tau_\epsilon$ for any $\epsilon\geq 0$ and define $\mathcal{S}_\epsilon\eta$  by replacing $\tau^n_\epsilon$ with $\tau_\epsilon$ in~\eqref{eq:shift}. We say that $\{\eta_t\}_{t\in \Rbb_{+}}$ is the limiting fairness process if for any $\epsilon>0$
    \[
    \mathcal{S}^n_\epsilon\eta^{n} \Rightarrow \mathcal{S}_\epsilon\eta,\mbox{on $G_\calP[0,\infty)$ endowed with Skorokhod-$J_1$ topology}.
    \]
    
    \end{dfn}
    Lemma~\ref{lem:tightness_idle} and the continuous mapping theorem guarantee that $\{\tau^n_\epsilon\}_{n\in \Nbb}$ is tight. Also, if $\epsilon_1<\epsilon_2$ and $\mathcal{S}^n_{\epsilon_1}\eta^{n} \Rightarrow \mathcal{S}_{\epsilon_1}\eta$, we also have $\mathcal{S}^n_{\epsilon_2}\eta^{n} \Rightarrow \mathcal{S}_{\epsilon_2}\eta$. Hence, rather than considering all real $\epsilon>0$, we can concentrate on any sequence $\{\epsilon_n\}_{n\in \Nbb}$ with $\epsilon_n\to 0$. For the sake of simplicity, we take $\epsilon_n=1/n$ for the rest of this paper. 
    
    Now, our next step is to prove the tightness of the $\epsilon$-shifted processes for any $\epsilon>0$.  Jakubowski~\cite{jak86} provides useful criteria to prove the tightness of measure-valued processes, by converting the problem to that of real-valued processes. This result has been used by Kaspi and Ramanan~\cite{kasram11} in the study of measure-valued processes arising in many server queueing systems and our approach is similar. For completeness, we state Jakubowski's criteria below.
    
   \begin{thm}[Jakubowski~\cite{jak86}]
   A sequence of stochastic processes $\{\eta^{n}\}_{n\in\Nbb}$ taking values in $D_\calP[0.T]$ is tight if and only if:
   \begin{description}
   \item[J1. (Compact Containment Condition)] For each $\rho, T>0$, there exists a compact set $\mathcal{K}_\rho\subset \calP$ such that
   \[
 	\liminf_{n\to \infty}\Pbb(\eta^{n}_t\in \mathcal{K}_\rho, \text{ for all }t\in[0,T])>1-\rho.  
   \]
   \item[J2.] There exists a family of functions $\mathbb{F}$ such that 
   \begin{enumerate}[i.]
   \item $H\in \mathbb{F}: \calP\to \Rbb$ and $\mathbb{F}$ separates points in $\calP$,
   \item $\mathbb{F}$ is closed under addition,
   \item For any fixed $H\in\mathbb{F}$, the sequence of functions $\{h^n(t):=H(\eta^n_t), \mbox{ for all }t \in \Rbb\}_{n\in \Nbb}$ is tight in $G_{\Rbb}[0,\infty)]$ endowed with usual Skorokhod-$J_1$ topology. 
   
   \end{enumerate}
   \end{description}
   \end{thm}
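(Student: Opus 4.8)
The plan is to recognize this as Jakubowski's tightness criterion~\cite{jak86}, so the cleanest route is simply to invoke the original proof; nonetheless I sketch the argument. The necessity (``only if'') direction is the routine one. If $\{\eta^n\}$ is tight in $D_\calP[0,T]$, then the compact containment condition J1 follows from the standard description of relatively compact subsets of $D_\calP[0,T]$, namely that their time-sections are contained in a compact subset of $\calP$. Condition J2 is then obtained by taking $\mathbb{F}$ to be any point-separating, addition-closed family of bounded continuous functions $H:\calP\to\Rbb$: since the composition map $\eta\mapsto H\circ\eta$ is continuous from $D_\calP[0,T]$ to $D_\Rbb[0,T]$, the continuous mapping theorem transfers tightness to each $\{H(\eta^n)\}_{n\in\Nbb}$.

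The substance is the sufficiency (``if'') direction. First I would use J1 to reduce, for each fixed $\rho$, to the event that all sample paths lie in a fixed compact set $\mathcal{K}_\rho\subset\calP$; on this event the processes take values in a compact metric space, and it suffices to establish tightness there. Since $\calP$ with the Prokhorov metric is separable, I can extract from $\mathbb{F}$ a countable subfamily $\{H_j\}_{j\in\Nbb}$ that still separates the points of $\mathcal{K}_\rho$. The map $\Phi:=(H_1,H_2,\dots):\mathcal{K}_\rho\to\Rbb^\infty$ is then a continuous injection on a compact set, hence a homeomorphism onto its image, so tightness of $\{\eta^n\}$ restricted to $\mathcal{K}_\rho$ is equivalent to tightness of the pushed-forward processes $\{\Phi(\eta^n)\}$ in the Skorokhod space $D_{\Rbb^\infty}[0,T]$.

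The crux is that coordinatewise tightness of $\{\Phi(\eta^n)\}$---which is exactly what J2(iii) supplies for each $H_j$---does \emph{not} by itself yield tightness in the Skorokhod $J_1$ topology, because $J_1$ requires a \emph{single} time-change that simultaneously tames the oscillations of all coordinates, so the jumps of distinct coordinates must occur together. This is precisely where closure under addition (J2(ii)) is indispensable: it guarantees that each pairwise sum $H_i+H_j$ also belongs to $\mathbb{F}$, whence $\{H_i(\eta^n)+H_j(\eta^n)\}_{n\in\Nbb}$ is tight as well, and from the joint tightness of $H_i(\eta^n)$, $H_j(\eta^n)$ and their sum one deduces that their discontinuities are asymptotically synchronized. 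Controlling the Skorokhod modulus $w'$ of the vector process then reduces to controlling the tight moduli of the real coordinates on a common partition, yielding tightness in $D_{\Rbb^\infty}[0,T]$. Pulling back through the homeomorphism $\Phi$ gives tightness of $\{\eta^n\}$ on $\mathcal{K}_\rho$, and letting $\rho\to0$ completes the argument. I expect this synchronization of jumps across coordinates---i.e.\ upgrading coordinatewise tightness to joint Skorokhod tightness via the addition-closure---to be the main technical obstacle.
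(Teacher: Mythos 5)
The paper does not prove this statement: it is quoted as an external result of Jakubowski~\cite{jak86} (``For completeness, we state Jakubowski's criteria below''), so there is no in-paper proof to compare against, and your primary recommendation --- simply invoke the original source --- is exactly what the authors do. Your sketch is nonetheless a faithful reconstruction of Jakubowski's actual argument: necessity via the compactness of time-sections of relatively compact subsets of $D_\calP[0,T]$ and the continuous mapping theorem; sufficiency via compact containment, extraction of a countable point-separating subfamily on the compact set $\mathcal{K}_\rho$ (valid by a Lindel\"of argument on $\mathcal{K}_\rho\times\mathcal{K}_\rho$ minus the diagonal), the embedding $\Phi=(H_1,H_2,\dots)$ into $\Rbb^\infty$ as a homeomorphism onto its image, and --- the genuine crux, which you correctly isolate --- the use of closure under addition (J2(ii)) to upgrade coordinatewise $J_1$-tightness to joint tightness, since tightness of $H_i(\eta^n)$, $H_j(\eta^n)$ and $(H_i+H_j)(\eta^n)$ forces asymptotic synchronization of jumps, which coordinatewise tightness alone cannot. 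Two small caveats: you implicitly require the members of $\mathbb{F}$ to be \emph{continuous} (needed both for $\Phi$ and for the composition map $\eta\mapsto H\circ\eta$ in the necessity direction); the paper's transcription of the theorem omits this hypothesis, though its working family $H(\eta)=\llangle f,\eta\rrangle$ with $f\in C_{\Rbb}^b[0,\infty)$ satisfies it. Also note the paper applies the criterion to \emph{left}-continuous paths (the space $G_\calP$, via time reversal), a cosmetic modification your sketch need not address.
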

 
Following the example of Kaspi and Ramanan~\cite{kasram11}, we choose the family of functions 
$$\mathbb{F}:=\{H: \exists f\in C_{\Rbb}^b[0,\infty) \mbox{ such that }H(\eta)=\llangle f, \eta\rrangle \mbox{ for all }\eta\in \calP\}.$$
We impose a slightly stronger uniform integrability condition on our fairness processes.
 \begin{asm}\label{asm:uniformboundedness}
 For any given $\rho, \varrho>0$ and $T>0$, there exists an $M$ such that 
 \[
 \liminf_{n\to \infty}\Pbb\left(\int_M^\infty \mu d\eta_t^n(\mu)<\varrho, \text{ for all }t\in[0,T]\right)>1-\rho.  
 \]
 \end{asm}
 This condition is slightly stronger than J1 and holds for any policy if the service rates have bounded support. Now, we are ready to prove the tightness of shifted fairness processes. 
 \begin{thm}\label{thm:shiftedtightness}
 For any $\epsilon>0$, the sequence of $\epsilon$-shifted processes $\{\mathcal{S}_\epsilon^n\eta^n\}_{n\in\Nbb}$ is tight.
 \end{thm}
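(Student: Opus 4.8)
The plan is to verify Jakubowski's criterion for the family $\mathbb{F}=\{H:H(\eta)=\langle f,\eta\rangle,\ f\in C_\Rbb^b[0,\infty)\}$, applied to the time-reversed processes so that tightness in $D_\calP$ transfers to tightness in $G_\calP$ under the metric $d_S'$. I would first settle the compact containment condition J1. By Prokhorov's theorem a subset of $\calP$ is relatively compact exactly when it is uniformly tight, so it suffices to control the tail mass. Since $\mathcal{S}^n_\epsilon\eta^n_t$ is either the fixed measure $\zeta$ or $\eta^n_t$, Markov's inequality gives $\eta^n_t([M,\infty))\leq M^{-1}\int_M^\infty\mu\,d\eta^n_t(\mu)$, and Assumption~\ref{asm:uniformboundedness} makes this uniformly small with probability exceeding $1-\rho$; adjoining the single measure $\zeta$ keeps the containing set compact. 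Hence J1 holds.

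The two structural requirements on $\mathbb{F}$ are immediate: it separates points because bounded continuous functions determine a probability measure, and it is closed under addition since $\langle f_1,\cdot\rangle+\langle f_2,\cdot\rangle=\langle f_1+f_2,\cdot\rangle$ with $f_1+f_2\in C_\Rbb^b[0,\infty)$. The real work is the third condition of J2, namely tightness of $h^n(t):=\langle f,\mathcal{S}^n_\epsilon\eta^n_t\rangle$ in $G_\Rbb[0,\infty)$. By~\eqref{eq:shift} the path equals the constant $\langle f,\zeta\rangle$ on $\{t\leq\tau_\epsilon^n\}$ and the ratio $\Phi^n_f(t)/\Psi^n(t)$ on $\{t>\tau_\epsilon^n\}$, where $\Phi^n_f(t):=\int_0^t\sum_{k=1}^n f(\mu_k)I_k(s)\,ds$ and $\Psi^n(t):=\int_0^t\sum_{k=1}^n I_k(s)\,ds=\sqrt{n}\int_0^t\hat{I}^n(s)\,ds$. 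Since $|h^n|\leq\|f\|_\infty$ everywhere, the real-valued compact containment of the values is automatic, and because the path is constant up to $\tau_\epsilon^n$ its only possible discontinuity is the single jump at $\tau_\epsilon^n$.

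The heart of the matter is the oscillation estimate past the shift time, and it is here that the shift earns its keep. For $t>\tau_\epsilon^n$ the definition of $\tau_\epsilon^n$ forces $\Psi^n(t)\geq\sqrt{n}\epsilon$, so differentiating the ratio and using $|\sum_k f(\mu_k)I_k(s)|\leq\|f\|_\infty\sum_k I_k(s)$ gives
\[
\left|\frac{d}{dt}h^n(t)\right|\leq\frac{2\|f\|_\infty\sqrt{n}\,\hat{I}^n(t)}{\Psi^n(t)}\leq\frac{2\|f\|_\infty}{\epsilon}\,\hat{I}^n(t),\qquad t>\tau_\epsilon^n,
\]
and therefore $|h^n(t+\delta)-h^n(t)|\leq\frac{2\|f\|_\infty}{\epsilon}\,\delta\,|\hat{I}^n|_{*,T}$ for $\tau_\epsilon^n<t<t+\delta\leq T$. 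Thus on $(\tau_\epsilon^n,T]$ the path is Lipschitz with a random constant that is tight by Lemma~\ref{lem:tightness_idle}. Estimating the Skorokhod modulus $w'(h^n,\delta)$ then reduces to choosing a partition whose breakpoints include $\tau_\epsilon^n$: on every block avoiding the jump the oscillation is at most $\frac{2\|f\|_\infty}{\epsilon}\,\delta\,|\hat{I}^n|_{*,T}$, which vanishes as $\delta\to0$ uniformly in the tight sense, while the lone jump is absorbed at a breakpoint.

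I expect the main obstacle to be the bookkeeping of this single jump within the reversed-time $J_1$ modulus. The estimate above is available only once $\Psi^n\geq\sqrt{n}\epsilon$; before $\tau_\epsilon^n$ the denominator can be arbitrarily small and the raw ratio is genuinely not tight, which is precisely why the unshifted process fails and the shift is introduced. To place $\tau_\epsilon^n$ at a breakpoint one must know it does not concentrate at the origin, so I would split on $\{\tau_\epsilon^n>\delta\}$ and bound $\limsup_n\Pbb(\tau_\epsilon^n\leq\delta)$ as $\delta\to0$, using the tightness of $\{\tau_\epsilon^n\}$ recorded after Definition~\ref{dfn:limitingprocess} together with the fact that accumulating a fixed amount $\epsilon>0$ of cumulative idleness takes a macroscopic time, so that the limit $\tau_\epsilon$ is strictly positive. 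Assembling the compact containment, the elementary properties of $\mathbb{F}$, and this modulus bound verifies both J1 and J2 and yields the tightness of $\{\mathcal{S}^n_\epsilon\eta^n\}$.
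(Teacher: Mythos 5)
Your proposal is correct and takes essentially the same route as the paper's proof: Jakubowski's criterion with the linear test family $\langle f,\cdot\rangle$, Assumption~\ref{asm:uniformboundedness} for J1, the oscillation bound of order $2\|f\|_{\infty}\,|\hat{I}^n|_{*,T}\,\varrho/\epsilon$ past $\tau_\epsilon^n$ (which the paper obtains via an algebraic difference-of-ratios identity rather than by differentiating the ratio), and a partition with a breakpoint at $\tau_\epsilon^n$ made legal by the lower bound $\tau_\epsilon^n>\epsilon/|\hat{I}^n|_{*,T}$. One minor correction: the non-concentration of $\tau_\epsilon^n$ near the origin follows from that lower bound together with the tightness of $|\hat{I}^n|_{*,T}$ (as you also say), not from the tightness of $\{\tau_\epsilon^n\}$ itself, which only prevents escape to infinity; since you supply the correct reason alongside, the argument stands.
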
 
 \begin{proof}
 Again it is enough to show the tightness restricting our attention to all finite intervals $[0,T]$. Assumption~\ref{asm:uniformboundedness} guarantees that the condition J1 holds. As $\eta$ takes values in the space of probability measures, $\{\llangle f, \mathcal{S}_\epsilon^n\eta^{n}\rrangle\}_{n\in\Nbb}$ is also bounded for any $f\in C_b[0,\infty)$. To prove J2, we need to show the modulus of continuity condition holds, i.e.,  for any given $\rho, \varsigma>0$, there exists a $C_{\varrho}$ such that for any $\varrho<C_\varrho$
 \[
 \liminf_{n\to \infty} \Pbb(w'(\llangle f, \mathcal{S}_\epsilon^n\eta^{n}\rrangle,\varrho)\geq \rho)<\varsigma,
 \]
 where $w'(\llangle f, \mathcal{S}_\epsilon^n\eta^{n}\rrangle,\varrho):=\inf w(\llangle f, \mathcal{S}_\epsilon^n\eta^{n}\rrangle,(t_{i},t_{i+1}])$ and $$w(\llangle f, \mathcal{S}_\epsilon^n\eta^{n}\rrangle,(t_{i},t_{i+1}]):=\sup_{t_{i}<s,t\leq t_{i+1}}|\llangle f, \mathcal{S}_\epsilon^n\eta^{n}\rrangle_t-\llangle f, \eta^{n}\rrangle_s|.$$  The infimum is taken over all partitions $\{t_i\}_{i\in \Nbb}$ of $[0,T]$ with $|t_{i+1}-t_{i}|\geq\varrho$ for all $i$. For more details on modulus of continuity condition, we refer the reader to Chapter 3 in~\cite{bil99}, noting that the condition we provide is modified for left-continuous functions. 
 
 For any given $t_2>t_1>\tau^n_{\epsilon}$, we have
 \begin{align*}
 &|\llangle f, \mathcal{S}_\epsilon^n\eta^{n}_{t_2}\rrangle-\llangle f, \mathcal{S}_\epsilon^n\eta^{n}_{t_1}\rrangle|\\&\quad\quad=\left|\frac{\displaystyle\int_0^{t_2}\sum_{k=1}^nf(\tilde{\mu}_k^n)\hat{I}_k^n(s)ds}{\displaystyle\int_0^{t_2}\sum_{k=1}^n\hat{I}_k^n(s)ds}-\frac{\displaystyle\int_0^{t_1}\sum_{k=1}^nf(\tilde{\mu}_k^n)\hat{I}_k^n(s)ds}{\displaystyle\int_0^{t_1}\sum_{k=1}^n\hat{I}_k^n(s)ds}\right|\\
 &\quad\quad=\left|\frac{\displaystyle\int_0^{t_1}\sum_{k=1}^nf(\tilde{\mu}_k^n)\hat{I}_k^n(s)ds+\int_{t_1}^{t_2}\sum_{k=1}^nf(\tilde{\mu}_k^n)\hat{I}_k^n(s)ds}{\displaystyle\int_0^{t_1}\sum_{k=1}^n\hat{I}_k^n(s)ds+\int_{t_1}^{t_2}\sum_{k=1}^n\hat{I}_k^n(s)ds}-\frac{\displaystyle\int_0^{t_1}\sum_{k=1}^nf(\tilde{\mu}_k^n)\hat{I}_k^n(s)ds}{\displaystyle\int_0^{t_1}\sum_{k=1}^n\hat{I}_k^n(s)ds}\right|\\
 &\quad\quad=\left|\frac{\displaystyle\left(\int_{t_1}^{t_2}\sum_{k=1}^nf(\tilde{\mu}_k^n)\hat{I}_k^n(s)ds\right)\left(\int_0^{t_1}\sum_{k=1}^n\hat{I}_k^n(s)ds\right)-\left(\int_{0}^{t_1}\sum_{k=1}^nf(\tilde{\mu}_k^n)\hat{I}_k^n(s)ds\right)\left(\int_{t_1}^{t_2}\sum_{k=1}^n\hat{I}_k^n(s)ds\right)}{\displaystyle\left(\int_0^{t_2}\sum_{k=1}^n\hat{I}_k^n(s)ds\right)\left(\int_0^{t_1}\sum_{k=1}^n\hat{I}_k^n(s)ds\right)}\right|\\
 \end{align*}
 Hence, for any partition of $[0,T]$ such that $t_1=0, t_2=\tau_\epsilon$ and $|t_{i+1}-t_{i}|=\varrho$ for $i\geq 2$,
 \[
 w(\llangle f, \mathcal{S}_\epsilon^n\eta^{n}\rrangle,(t_{i},t_{i+1}])\leq \frac{2|f|_{*,\infty}|I|_{*,\infty}\varrho}{\epsilon}
 \]
 By definition $\tau_\epsilon>\epsilon/|I|_{*,\infty}$ and
 \[
 \Pbb(w'(\llangle f, \mathcal{S}_\epsilon^n\eta^{n}\rrangle,\varrho)\geq \rho)\leq \Pbb\left(|I|_{*,\infty}>\max\left\{\frac{\rho\epsilon}{2|f|_{*,\infty}\varrho}, \frac{\epsilon}{\varrho}\right\}\right).
 \]
 The tightness of $|I|_{*,\infty}$ guarantees that this probability approaches 0 as $\varrho\to 0$, which proves our theorem. \end{proof}
 
 In our analysis of the system length process, we are mainly concerned with the process $\llangle\iota, \eta^n\rrangle$ which is equal to the expected value of a random variable following the probability distribution $\eta^n_t$ at any time $t$ and our next result shows that tightness of the shifted versions of these processes is implied by Theorem~\ref{thm:shiftedtightness}. 
 \begin{cor}
 For any $\epsilon>0$, the set of real-valued processes $\{\llangle \iota, \mathcal{S}_\epsilon^n\eta^n\rrangle\}_{n\in\Nbb}$ is tight.
 \end{cor}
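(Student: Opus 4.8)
The plan is to reduce the unbounded integrand $\iota$ to bounded continuous ones and then invoke Theorem~\ref{thm:shiftedtightness}. The corollary does not follow immediately only because $\iota\notin C_\Rbb^b[0,\infty)$, so the identity is not a member of the separating family $\mathbb{F}$ used there. For each $M>0$ I would introduce the truncation $\iota_M:=\iota\wedge M$, which is bounded and continuous on $\Rbb_+$, so that $H_M(\eta):=\llangle \iota_M,\eta\rrangle$ lies in $\mathbb{F}$. Since $\eta\mapsto\llangle \iota_M,\eta\rrangle$ is continuous and bounded from $\calP$ (with the Prokhorov topology) into $\Rbb$, the induced path map $G_\calP[0,\infty)\to G_\Rbb[0,\infty)$ is continuous in the Skorokhod-$J_1$ topology. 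Hence Theorem~\ref{thm:shiftedtightness} together with the continuous mapping theorem shows that for each fixed $M$ the truncated sequence $\{\llangle \iota_M,\mathcal{S}_\epsilon^n\eta^n\rrangle\}_{n\in\Nbb}$ is tight.

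Next I would control the truncation error uniformly in $t$ and $n$. Writing $X^n:=\llangle \iota,\mathcal{S}_\epsilon^n\eta^n\rrangle$ and $Y^{n,M}:=\llangle \iota_M,\mathcal{S}_\epsilon^n\eta^n\rrangle$, I have pointwise
\[
0\leq X^n_t-Y^{n,M}_t=\int_M^\infty(\mu-M)\,d(\mathcal{S}_\epsilon^n\eta^n_t)(\mu)\leq\int_M^\infty\mu\,d(\mathcal{S}_\epsilon^n\eta^n_t)(\mu).
\]
By the definition in~\eqref{eq:shift}, $\mathcal{S}_\epsilon^n\eta^n_t$ equals the placeholder $\zeta$ for $t\leq\tau_\epsilon^n$ and equals $\eta^n_t$ for $t>\tau_\epsilon^n$; moreover $\eta^n_0=\zeta$, so evaluating the event in Assumption~\ref{asm:uniformboundedness} at $t=0$ forces the deterministic quantity $\int_M^\infty\mu\,d\zeta$ to be below $\varrho$ once $M$ is large. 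Thus the right-hand side above is dominated by $\max\{\int_M^\infty\mu\,d\zeta,\ \sup_{0\leq t\leq T}\int_M^\infty\mu\,d\eta^n_t\}$, and Assumption~\ref{asm:uniformboundedness} yields that for any $\rho,\varrho>0$ there is an $M$ with $\liminf_{n\to\infty}\Pbb(|X^n-Y^{n,M}|_{*,T}<\varrho)>1-\rho$.

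Finally I would combine the two ingredients through the standard approximation criterion for tightness. From the sup-norm bound, for any subinterval $I$ one has $w(X^n,I)\leq w(Y^{n,M},I)+2|X^n-Y^{n,M}|_{*,T}$, hence $w'(X^n,\varrho)\leq w'(Y^{n,M},\varrho)+2|X^n-Y^{n,M}|_{*,T}$ for the $J_1$ modulus of continuity, while compact containment of $\{X^n\}$ (boundedness in probability of the values) follows from that of $\{Y^{n,M}\}$ up to the same additive error. Given $\rho,\varsigma>0$, I would first choose $M$ so that the error term is below $\rho/4$ with limiting probability at least $1-\varsigma/2$, and then use the tightness of $\{Y^{n,M}\}_{n}$ to bound $w'(Y^{n,M},\varrho)$ for small $\varrho$; together these give $\limsup_{n\to\infty}\Pbb(w'(X^n,\varrho)\geq\rho)<\varsigma$, which establishes tightness of $\{X^n\}_{n\in\Nbb}$ on each $[0,T]$ and hence on $G_\Rbb[0,\infty)$.

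I expect the main obstacle to be the uniform-in-$n$ control of the truncation error, which is precisely where Assumption~\ref{asm:uniformboundedness} enters; in particular one must check that this tail condition also absorbs the contribution of the placeholder measure $\zeta$ on the shifted interval $[0,\tau_\epsilon^n]$, and not merely that of the genuine fairness measure $\eta^n_t$. Once this uniform integrability is secured, the passage from the truncated (bounded-$f$) tightness supplied by Theorem~\ref{thm:shiftedtightness} to the full statement for $\iota$ is routine.
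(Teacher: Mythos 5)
Your proposal is correct and follows essentially the same route as the paper: truncate $\iota$ at level $M$ so the truncated integrand lies in the family $\mathbb{F}$ covered by Theorem~\ref{thm:shiftedtightness}, control the truncation error uniformly on $[0,T]$ via Assumption~\ref{asm:uniformboundedness}, and combine through the compact-containment and modulus-of-continuity bounds exactly as in the paper's estimate $\Pbb(w'(\llangle \iota, \mathcal{S}_\epsilon^n\eta^{n}\rrangle,\varrho)\geq \rho)\leq \Pbb(w'(\llangle \iota_M, \mathcal{S}_\epsilon^n\eta^{n}\rrangle,\varrho)\geq \rho-2\varsigma)+\Pbb(\int_M^\infty \mu\, d\eta_t^n(\mu)>\varsigma \mbox{ for some }t\in[0,T])$. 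Your explicit check that the placeholder measure $\zeta$'s tail is absorbed by evaluating Assumption~\ref{asm:uniformboundedness} at $t=0$ is a point the paper leaves implicit, but it does not change the argument.
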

 \begin{proof}
 The compact containment condition is implied by Assumption~\ref{asm:uniformboundedness} as for any given $\rho, \varrho>0$ and $T>0$
 \[
 \liminf_{n\to \infty}\Pbb\left(\llangle \iota, \mathcal{S}_\epsilon^n\eta^n_t\rrangle<M+\varrho, \text{ for all }t\in[0,T]\right)>1-\rho.
 \]
 We now prove the modulus of continuity condition, we first define the truncated identity function as $\iota_M(\mu)=\mu\delta_{\{\mu\leq M\}}(\mu)+M\delta_{\{\mu<M\}}(\mu)$. Using that for any $t_1,t_2\geq 0$,
 \begin{align*}
 \llangle\iota, \mathcal{S}_\epsilon^n\eta^n_{t_2}\rrangle-\llangle\iota, \mathcal{S}_\epsilon^n\eta^n_{t_1}\rrangle &\leq\llangle\iota_M, \mathcal{S}_\epsilon^n\eta^n_{t_2}\rrangle-\llangle\iota_M, \mathcal{S}_\epsilon^n\eta^n_{t_1}\rrangle+\int_M^\infty\mu d\mathcal{S}_\epsilon^n\eta^n_{t_2}+\int_M^\infty\mu d\mathcal{S}_\epsilon^n\eta^n_{t_1}.
 \end{align*}
 we write
 \[
 \Pbb(w'(\llangle \iota, \mathcal{S}_\epsilon^n\eta^{n}\rrangle,\varrho)\geq \rho)\leq \Pbb(w'(\llangle \iota_M, \mathcal{S}_\epsilon^n\eta^{n}\rrangle,\varrho)\geq \rho-2\varsigma)+\Pbb\left(\int_M^\infty \mu d\eta_t^n(\mu)>\varsigma, \text{ for some }t\in[0,T]\right).
 \]
 For any $\rho>0$ we can find $\Delta, \varrho$ and $M$ such that the righthand side is less than $\varsigma$ for all $n$, and our result is proved. 
 \end{proof}
 
 Choosing $\epsilon_n=1/n$ and using a diagonal argument, Theorem~\ref{thm:shiftedtightness} implies that any sequence of $\{\eta^n\}_{n\in \Nbb}$ has a subsequence where shifted fairness processes weakly converges for all $\epsilon>0$. However, doing the analysis for each $\epsilon>0$ can be tedious for specific applications. We define the measure-valued scaled total cumulative idleness process as
 \[
 \hat{C}^n(t,A):= \sum_{k=1}^n \delta_{\tilde{\mu}_k^n}(A)\int_{0}^{t}\hat{I}_k^n(s)ds, \mbox{for all }t\geq 0, A\in \CalB(\Rbb_{+}), n\in \Nbb.
 \] 
We now show that it is possible to identify the limiting fairness process, by examining $\hat{C}^n(t,A)$ rather than the $\epsilon$-shifted versions of the fairness process. 
 
 \begin{lemma}\label{lem:equivconv}
 Suppose for all $A\in \mathcal{B}(\Rbb_+)$ and $T>0$, 
 \begin{equation}
 \lim_{n\to \infty} \sup_{0\leq t\leq T}\left|\hat{C}^n(t,A)-\hat{C}(t, A)\right|\toP 0. 
 \label{Eq:ToPCondition}
 \end{equation}
 Then 
 \[
 \eta_t(A)=\left\{\begin{array}{ll}
 \displaystyle\frac{\hat{C}(t, A)}{\hat{C}(t,\Rbb_+)} &\mbox{if }\hat{C}(t,\Rbb_{+})>0\\
 \zeta(A) &\mbox{otherwise}
 \end{array}\right.
 \]
 is the limiting fairness process.
 \end{lemma}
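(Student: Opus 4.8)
The plan is to exploit the fact that, up to the factor $\sqrt n$, $\hat{C}^n$ is precisely the unnormalized object whose self-normalization defines the fairness process, and then to push the hypothesized convergence of $\hat{C}^n$ through the normalization and the first-passage map. First I would record the algebraic identities
\[
\tau_\epsilon^n=\inf\{t:\hat{C}^n(t,\Rbb_+)>\epsilon\},\qquad \mathcal{S}_\epsilon^n\eta^n_t(A)=\frac{\hat{C}^n(t,A)}{\hat{C}^n(t,\Rbb_+)}\ \text{for } t>\tau_\epsilon^n,
\]
which hold because the numerator and denominator of $\eta^n_t(A)$ equal $\sqrt n\,\hat{C}^n(t,A)$ and $\sqrt n\,\hat{C}^n(t,\Rbb_+)$, and $\int_0^t\hat{I}^n(s)\,ds=\hat{C}^n(t,\Rbb_+)$. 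Since $\hat{C}^n(\cdot,\Rbb_+)$ is the integral of the nonnegative process $\hat{I}^n$, both it and its limit $\hat{C}(\cdot,\Rbb_+)$ are continuous and nondecreasing; I set $\tau_\epsilon:=\inf\{t:\hat{C}(t,\Rbb_+)>\epsilon\}$, so that $\mathcal{S}_\epsilon\eta$ and $\mathcal{S}_\epsilon^n\eta^n$ are the images of $\hat{C}$ and $\hat{C}^n$ under one deterministic ``normalize-and-shift'' map $\Psi_\epsilon$. As \eqref{Eq:ToPCondition} is convergence in probability, I would argue along subsequences: it suffices that every subsequence admit a further subsequence along which $\mathcal{S}_\epsilon^n\eta^n\to\mathcal{S}_\epsilon\eta$ almost surely in the $d_S'$ metric, since this characterizes convergence in probability and hence yields the required weak convergence. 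Fixing a countable convergence-determining family $\{f_i\}\subset C_\Rbb^b[0,\infty)$ and a countable generating class of Borel sets, I extract a subsequence along which $\sup_{t\le T}|\hat{C}^n(t,A)-\hat{C}(t,A)|\to0$ a.s.\ simultaneously for all these $A$ and all $T$; approximating each $f_i$ uniformly by simple functions built from the chosen sets, and using boundedness of the total masses, upgrades this to $\sup_{t\le T}|\llangle f_i,\hat{C}^n(t,\cdot)\rrangle-\llangle f_i,\hat{C}(t,\cdot)\rrangle|\to0$ a.s.

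The crux, and the main obstacle, is the convergence $\tau_\epsilon^n\to\tau_\epsilon$ of the shift times. Taking $A=\Rbb_+$ gives uniform convergence of the continuous nondecreasing totals, and a standard first-passage argument yields $\tau_\epsilon^n\to\tau_\epsilon$ \emph{provided} $\hat{C}(\cdot,\Rbb_+)$ strictly increases through level $\epsilon$: the bound $\liminf\tau_\epsilon^n\ge\tau_\epsilon$ comes from monotonicity together with uniform convergence at times where $\hat{C}(\cdot,\Rbb_+)<\epsilon$, and the matching bound $\limsup\tau_\epsilon^n\le\tau_\epsilon$ from times just beyond $\tau_\epsilon$ where $\hat{C}(\cdot,\Rbb_+)>\epsilon$. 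The difficulty is that $\hat{C}(\cdot,\Rbb_+)$ may have flat stretches, corresponding to the limiting system being momentarily fully occupied, and a flat lying exactly at level $\epsilon$ destroys the upper bound. I would circumvent this by observing that the flat levels of any fixed continuous nondecreasing path form an at most countable set, so by Tonelli $\Pbb(\epsilon\text{ is a flat level of }\hat{C}(\cdot,\Rbb_+))=0$ for Lebesgue-almost every $\epsilon$, whence $\tau_\epsilon^n\to\tau_\epsilon$ in probability for a.e.\ $\epsilon$. By the monotonicity remark following Definition~\ref{dfn:limitingprocess}, it is enough to prove $\mathcal{S}_\epsilon^n\eta^n\Rightarrow\mathcal{S}_\epsilon\eta$ along a single sequence $\epsilon_m\downarrow0$, which I choose among these good levels; the statement for every $\epsilon>0$ then follows.

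On the set $\{t>\tau_\epsilon^n\}$ the total mass is bounded below by $\epsilon>0$ (it equals $\epsilon$ at $\tau_\epsilon^n$ by continuity and is nondecreasing), so dividing the uniformly convergent numerators by it is harmless and gives $\llangle f_i,\eta^n_t\rrangle\to\llangle f_i,\eta_t\rrangle$ uniformly on compact time sets, for each determining function. To promote this to uniform convergence of $\eta^n_t$ to $\eta_t$ in the Prokhorov metric $d_P$, I would invoke Assumption~\ref{asm:uniformboundedness}, which confines all the $\eta^n_t$ to a fixed compact set $\mathcal{K}\subset\calP$; on $\mathcal{K}$ the weak topology is metrized by $d(\mu,\nu)=\sum_i 2^{-i}\big(|\llangle f_i,\mu\rrangle-\llangle f_i,\nu\rrangle|\wedge1\big)$, so coordinatewise uniform convergence forces $\sup_t d_P(\eta^n_t,\eta_t)\to0$ on time sets bounded away from the singularity. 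Before the shift both processes equal the constant $\zeta$, and at the shift itself $\eta^n_{\tau_\epsilon^n}(A)=\hat{C}^n(\tau_\epsilon^n,A)/\epsilon\to\hat{C}(\tau_\epsilon,A)/\epsilon=\eta_{\tau_\epsilon}(A)$, so the post-jump value converges as well.

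Finally I would assemble these pieces into $J_1$ convergence on $G_\calP[0,\infty)$. Each $\mathcal{S}_\epsilon^n\eta^n$ has exactly one discontinuity, the jump from $\zeta$ to $\eta^n_{\tau_\epsilon^n}$ at $\tau_\epsilon^n$; I have shown that this location converges, $\tau_\epsilon^n\to\tau_\epsilon$, that the pre-shift value is the constant $\zeta$, that the post-shift process converges uniformly away from $\tau_\epsilon$, and that the jump value converges. Taking a time change $\lambda_n$ that maps $\tau_\epsilon$ to $\tau_\epsilon^n$ and is the identity outside a shrinking neighborhood realigns the regimes, so that $\mathcal{S}_\epsilon^n\eta^n\circ\lambda_n$ and $\mathcal{S}_\epsilon\eta$ agree in regime on each side and $\sup_t d_P\big(\mathcal{S}_\epsilon^n\eta^n(\lambda_n(t)),\mathcal{S}_\epsilon\eta(t)\big)\to0$ while $\sup_t|\lambda_n(t)-t|\to0$; this is exactly the bound the reflected metric $d_S'$ requires, which is why no stronger topology works across this single jump. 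Hence $\mathcal{S}_{\epsilon_m}^n\eta^n\to\mathcal{S}_{\epsilon_m}\eta$ a.s.\ along the subsequence, giving $\mathcal{S}_{\epsilon_m}^n\eta^n\Rightarrow\mathcal{S}_{\epsilon_m}\eta$ for each good $\epsilon_m$, and the monotonicity remark extends this to all $\epsilon>0$, identifying the stated $\eta$ as the limiting fairness process. The genuinely delicate point is the first-passage step; the remainder is continuity of normalization and careful bookkeeping of one well-controlled jump.
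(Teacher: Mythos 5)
Your proposal is correct, but it takes a genuinely different route from the paper's proof. The paper works at the level of convergence in probability on the original space throughout: it builds the piecewise-linear time change $\Lambda^n$ carrying $\tau_\epsilon^n$ to $\tau_\epsilon$, proves $|\eta^n(t,A)-\eta(\Lambda^n(t),A)|\toP 0$ set by set via a four-summand decomposition of the ratio difference (with denominators bounded below by $\epsilon$ after the shift, exactly as in your normalization step), and then upgrades to measure-valued weak convergence by combining the tightness of Theorem~\ref{thm:shiftedtightness} with part 2 of Mitoma's theorem, checking the finite-dimensional condition through the same range-partition simple-function approximation you sketch. You instead argue pathwise: the subsequence characterization of convergence in probability, almost-sure uniform convergence for a countable family, compact containment from Assumption~\ref{asm:uniformboundedness} to pass from coordinatewise to Prokhorov-uniform convergence, and an explicit time change realigning the single jump --- bypassing Mitoma entirely and yielding convergence in probability in $d_S'$, which is stronger than the weak convergence Definition~\ref{dfn:limitingprocess} requires. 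The most valuable difference is your treatment of the first-passage step: the paper simply declares $\tau_\epsilon^n$ a continuous functional of $\hat{C}^n(\cdot,\Rbb_+)$ and invokes the continuous mapping theorem, which fails when $\hat{C}(\cdot,\Rbb_+)$ has a flat stretch exactly at level $\epsilon$; your observation that the flat levels of a continuous nondecreasing path are countable, so by Tonelli almost every $\epsilon$ is a good level, combined with the monotonicity remark following Definition~\ref{dfn:limitingprocess}, repairs precisely this gap. One wording fix: your ``countable generating class of Borel sets'' should be taken as the range-preimages $f_i^{-1}([a,b))$ with rational endpoints (still countable, and hypothesis~\eqref{Eq:ToPCondition} applies to every Borel set), since uniform simple-function approximation of a bounded continuous $f$ partitions the range rather than the domain, and no fixed class of domain sets can serve all the $f_i$ uniformly on the unbounded support; this is exactly the approximation the paper uses, so it is a presentational adjustment, not a gap.
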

 
 \begin{proof}
 It is clear that $\{\eta(t,\cdot)\}$ is a probability measure valued process. For any Borel set $A\in \mathcal{B}(\Rbb_{+})$ processes $\{\hat{C}^n(t,A), t\geq 0\}$ are continuous processes and converge in probability to  $\{\hat{C}(t, A), t\geq 0\}$ in $\Cbb_{\Rbb_+}[0,\infty)$. The stopping times $\tau_\epsilon^n=\inf\{t>0:  \hat{C}^n(t,\Rbb_+)>\epsilon\}$ and can be thought of as a continuous function of $\hat{C}^n(\cdot,\Rbb_+)$. Using the continuous mapping theorem,  $$\tau_\epsilon^n\toP \tau_\epsilon=\inf\{t>0: \hat{C}(t,\Rbb_+)>0\}.$$ 
 
For any fixed $\epsilon>0$, define 
 \[
 \Lambda^n(t)=\left\{\begin{array}{ll}\displaystyle
 \frac{\tau_\epsilon}{\tau_\epsilon^n} &\mbox{if }0\leq t\leq \tau_\epsilon^n\\
 \displaystyle \frac{T-\tau_\epsilon}{T-\tau_\epsilon^n}t+\frac{T(\tau_\epsilon-\tau_\epsilon^n)}{T-\tau_\epsilon^n} &\mbox{if }\tau_\epsilon^n<t\leq T.
 \end{array}
 \right.
 \]
Clearly, $|\Lambda^n(t)-t|\toP 0$. We now prove for any $A\in \CalB(\Rbb_{+})$
 \begin{equation}
 \lim_{n\to\infty}\left|\eta^n(t,A)-\eta(\Lambda^n(t), A)\right|\toP 0, \mbox{in $G_{\Rbb}[0,T]$.}
 \label{Eq: ConvPA}
 \end{equation}
 For $t\leq \tau^n_\epsilon$, we have $\left|\eta^n(t,A)-\eta(\Lambda^n(t), A)\right|=0$ for all $A$. $\Lambda^n(t)$ is defined so that $\tau_\epsilon>\tau_\epsilon^n$ if and only if $t<\Lambda^n(t)$ for all $t$.  For $t\geq \tau^n_\epsilon$
 \begin{align*}
 \left|\eta^n(t,A)-\eta(\Lambda^n(t), A)\right|&=\delta(\tau_\epsilon>\tau_\epsilon^n)\left|\eta^n(t,A)-\eta(\Lambda^n(t), A)\right|+\delta(\tau_\epsilon\leq\tau_\epsilon^n)\left|\eta^n(t,A)-\eta^n(\Lambda(t), A)\right|\\
 &\leq \delta(\tau_\epsilon\leq \tau_\epsilon^n)\left(\left|\eta^n(t,A)-\eta(t, A)\right|+\left|\eta(\Lambda^n(t),A)-\eta(t, A)\right|\right)\\
 &\quad +\delta(\tau_\epsilon>\tau_\epsilon^n)\left(\left|\eta^n(\Lambda^n(t),A)-\eta(\Lambda^n(t), A)\right|+\left|\eta^n(\Lambda^n(t),A)-\eta^n(t, A)\right|\right)\\
 &=\delta(\tau_\epsilon\leq \tau_\epsilon^n)\left(\left|\frac{\hat{C}^n(t,A)}{\hat{C}^n(t,\Rbb_{+})}-\frac{\hat{C}(t,A)}{\hat{C}(t,\Rbb_{+})}\right|+\left|\frac{\hat{C}^n(\Lambda^n(t),A)}{\hat{C}^n(\Lambda^n(t),\Rbb_{+})}-\frac{\hat{C}^n(t,A)}{\hat{C}^n(t,\Rbb_{+})}\right|\right)\\
 &\quad+\delta(\tau_\epsilon>\tau_\epsilon^n)\left(\left|\frac{\hat{C}^n(\Lambda^n(t),A)}{\hat{C}^n(\Lambda^n(t),\Rbb_{+})}-\frac{\hat{C}(\Lambda^n(t),A)}{\hat{C}(\Lambda^n(t),\Rbb_{+})}\right|+\left|\frac{\hat{C}^n(\Lambda^n(t),A)}{\hat{C}^n(\Lambda^n(t),\Rbb_{+})}-\frac{\hat{C}^n(t,A)}{\hat{C}^n(t,\Rbb_{+})}\right|\right).
 \end{align*}
We think of the righthand side of the last equality as four summands and if we can show converge of each summand to 0 in probability, then we can conclude that for all $A$, $\eta^n(\cdot, A)\toP\eta(\cdot, A)$ in $G_{\Rbb_+}[0,T]$. 
 For the first summand, we have
 \begin{align*}
 \left|\frac{\hat{C}^n(t,A)}{\hat{C}^n(t,\Rbb_{+})}-\frac{\hat{C}(t,A)}{\hat{C}(t,\Rbb_{+})}\right|&=\left|\frac{\hat{C}(t,\Rbb_{+})(\hat{C}^n(t,A)-\hat{C}(t,A))+\hat{C}(t,A)(\hat{C}^n(t,\Rbb_{+})-\hat{C}(t,\Rbb_{+}))}{\hat{C}(t,\Rbb_{+})\hat{C}^n(t,\Rbb_{+})}\right|\\
 &\leq\frac{\hat{I}^n(t)T\left(\left|\hat{C}^n(t,A)-\hat{C}(t,A)\right|+\left|\hat{C}^n(t,\Rbb_{+})-\hat{C}(t,\Rbb_{+})\right|\right)}{\epsilon^2},
 \end{align*}
 and for the second summand, 
 \begin{align*}
 \left|\frac{\hat{C}^n(\Lambda^n(t),A)}{\hat{C}^n(\Lambda^n(t),\Rbb_{+})}-\frac{\hat{C}^n(t,A)}{\hat{C}^n(t,\Rbb_{+})}\right|&= \left|\frac{\hat{C}^n(\Lambda^n(t),A)\hat{C}^n(t,\Rbb_{+})-\hat{C}^n(t,A)\hat{C}^n(\Lambda^n(t),\Rbb_{+})}{\hat{C}^n(\Lambda^n(t),\Rbb_{+})\hat{C}^n(t,\Rbb_{+})}\right|\\
 &= \left|\frac{\hat{C}^n(\Lambda^n(t),A)\hat{C}^n(t,\Rbb_{+})-\hat{C}^n(t,A)\hat{C}^n(\Lambda^n(t),\Rbb_{+})}{\hat{C}^n(\Lambda^n(t),\Rbb_{+})\hat{C}^n(t,\Rbb_{+})}\right|\\
 &\leq \frac{|\hat{C}^n(t,\Rbb_{+})-\hat{C}^n(\Lambda^n(t),\Rbb_{+})|}{\epsilon}+\frac{|\hat{C}^n(t,A)-\hat{C}^n(\Lambda^n(t),A)|}{\epsilon}\\
 &\leq \frac{2\hat{I}^n(t)|t-\Lambda^n(t)|}{\epsilon},
 \end{align*} 
which implies convergence of both terms to 0 in probability. The convergence of third and fourth terms can be proven similarly, which in turn imply \eqref{Eq: ConvPA}.

Using the convergence for individual $A\in\CalB(\Rbb_{+})$, we now prove the convergence of the measure-valued processes. To prove our result, we use part 2 of Theorem 5.3 in Mitoma~\cite{mitoma83}, which translated into our case, states that if
 \begin{enumerate}
 \item For each $f\in\Cbb_b(\Rbb_{+})$, $\{\llangle f, \mathcal{S}_\epsilon^n\eta^{n}\rrangle_t, t\geq 0\}$ is tight in $\Dbb_{\Rbb}[0.T]$.
 \item For each $f_1, f_2, \ldots, f_m\in \Cbb_b(\Rbb_{+})$ and $t_1, t_2, \ldots, t_m\in [0,T]$, 
 \[
 (\llangle f_1, \mathcal{S}_\epsilon^n\eta^{n}_{t_1}\rrangle,\llangle f_2, \mathcal{S}_\epsilon^n\eta^{n}_{t_2}\rrangle,\cdots,\llangle f_m, \mathcal{S}_\epsilon^n\eta^{n}_{t_m}\rrangle)\Rightarrow (\llangle f_1, \mathcal{S}_\epsilon\eta_{t_1}\rrangle,\llangle f_2, \mathcal{S}_\epsilon\eta_{t_2}\rrangle,\cdots,\llangle f_m, \mathcal{S}_\epsilon\eta_{t_m}\rrangle) \in \Rbb^m
 \]
 \end{enumerate}
 then $ \mathcal{S}_\epsilon^n\eta^{n}\Rightarrow  \mathcal{S}_\epsilon\eta$. 
 
 We proved the tightness condition in Theorem~\ref{thm:shiftedtightness}. We prove the second part by proving convergence in probability which is slightly stronger than what is deserved. For any $\varrho>0$
 \begin{align*}
 &\lim_{n\to\infty}\Pbb(\left|(\llangle f_1, \mathcal{S}_\epsilon^n\eta^{n}_{t_1}\rrangle,\cdots,\llangle f_m, \mathcal{S}_\epsilon^n\eta^{n}_{t_m}\rrangle)-(\llangle f_1, \mathcal{S}_\epsilon\eta_{t_1}\rrangle,\cdots,\llangle f_m, \mathcal{S}_\epsilon\eta_{t_m}\rrangle)\right|>\varrho)\\ &\qquad\qquad\qquad\qquad\qquad\qquad\qquad\qquad\qquad\leq \lim_{n\to\infty}\sum_{i=1}^m \Pbb(\left|\llangle f_i, \mathcal{S}_\epsilon^n\eta^{n}_{t_i}\rrangle-\llangle f_i, \mathcal{S}_\epsilon\eta_{t_i}\rrangle\right|>\frac{\varrho}{m})\\
 \end{align*}
 The functions $f_i(x)\in\Cbb_{b}$, i.e., $|f_i(x)|<b_i$ for some $b_i$ and $f_i(x)$ can be approximated by a simple function such that 
 \[
 \left|f_i(x)-\sum_{j=1}^{k_i} a_j^i \delta_x(A_j^i)\right|<\frac{\epsilon}{6m}, \mbox{ for }1\leq i\leq m.
 \]
 Therefore, 
 \begin{align*}
 \Pbb(\left|\llangle f_i, \mathcal{S}_\epsilon^n\eta^{n}_{t_i}\rrangle-\llangle f_i, \mathcal{S}_\epsilon\eta_{t_i}\rrangle\right|>\frac{\varrho}{m})&\leq \Pbb\left(\left|\llangle f_i, \mathcal{S}_\epsilon^n\eta^{n}_{t_i}\rrangle-\llangle f_i, \mathcal{S}_\epsilon\eta_{\Lambda^n(t_i)}\rrangle\right|\right.\\&\quad\quad\quad+
 \left.\left|\llangle f_i, \mathcal{S}_\epsilon\eta_{t_i}\rrangle-\llangle f_i, \mathcal{S}_\epsilon\eta_{\Lambda^n(t_i)}\rrangle\right|>\frac{\varrho}{m}\right)\\
 &\leq \Pbb(\left|\llangle f_i, \mathcal{S}_\epsilon^n\eta^{n}_{t_i}\rrangle-\llangle f_i, \mathcal{S}_\epsilon\eta_{\Lambda^n(t_i)}\rrangle\right|>\frac{\varrho}{2m})+\Pbb(2b_i|t_i-\Lambda^n(t_i)|>\frac{\varrho}{m})\\
 &\leq \Pbb\left(\left|\llangle f_i, \mathcal{S}_\epsilon^n\eta^{n}_{t_i}\rrangle-\llangle \sum_{j=1}^{k_i} a_j^i \delta_x(A_j^i), \mathcal{S}_\epsilon^n\eta^n_{t_i}\rrangle\right|>\frac{\varrho}{6m}\right)\\
 &\quad +\Pbb\left(\left|\llangle f_i, \mathcal{S}_\epsilon\eta_{\Lambda^n(t_i)}\rrangle-\llangle \sum_{j=1}^{k_i} a_j^i \delta_x(A_j^i), \mathcal{S}_\epsilon\eta_{\Lambda^n(t_i)}\rrangle\right|>\frac{\varrho}{6m}\right)\\
 &\quad +\Pbb\left(\left|\llangle \sum_{j=1}^{k_i} a_j^i \delta_x(A_j^i), \mathcal{S}_\epsilon^n\eta^n_{t_i}\rrangle-\llangle \sum_{j=1}^{k_i} a_j^i \delta_x(A_j^i), \mathcal{S}_\epsilon\eta_{\Lambda^n(t_i)}\rrangle\right|>\frac{\varrho}{6m}\right)\\
 &\quad+\Pbb(2b_i|t_i-\Lambda^n(t_i)|>\frac{\varrho}{m})\\
 &\leq \sum_{j=1}^{k_i} \Pbb\left(\left|\eta^n(t_i,A_j^i)-\eta(\Lambda^n(t_i), A_j^i)\right|>\frac{\varrho}{6b_imk_i}\right)\\
 &\quad+\Pbb(2b_i|t_i-\Lambda^n(t_i)|>\frac{\varrho}{m}), 
 \end{align*}
 which converges to 0 as implied by~\eqref{Eq: ConvPA}.
 \end{proof}
 
  \subsubsection*{Limiting Fairness Measures for Fastest-Server First and Slowest-Server-First Policies}
  
  We use Lemma~\ref{lem:equivconv} to derive the limiting fairness processes for fastest-server-first and slowest-server-first policies, where the system controller has the knowledge of individual service rates and routes the arriving customer to the fastest and slowest idle servers, respectively.  
  
  \begin{thm}\label{thm:fsf_fairness}
  If $\zeta=\delta_{\mu_\min}$. where $\mu_\min := \essinf(\tilde{\mu}_k^n)$, then the limiting fairness measure for Fastest-Server-First policy is
  \[
  \eta_t(A)=\delta_{\mu_\min}(A) \mbox{ for all }t\geq 0.
  \]
  
  \end{thm}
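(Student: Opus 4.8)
The plan is to follow the paper's own recipe and invoke Lemma~\ref{lem:equivconv}: it suffices to show that the measure-valued cumulative idleness process $\hat{C}^n(t,\cdot)$ converges to a limit $\hat{C}(t,\cdot)$ that is concentrated at $\mu_{\min}$, so that after normalization the fairness process collapses to $\delta_{\mu_{\min}}$. Because $\zeta=\delta_{\mu_{\min}}$, the placeholder already coincides with the claimed limit, so for $t\le\tau_0$ the assertion is immediate and the entire content lies in describing how the idleness distribution behaves after the system first idles. Under Fastest-Server-First the intuition is that the fast servers are kept busy, so all accumulated idleness sits on the slowest servers, whose rates cluster at $\mu_{\min}=\essinf(\tilde\mu^n_k)$.

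The crux is the single estimate that the scaled cumulative idleness of the fast servers is negligible: for every $\delta>0$ and $T>0$,
\[
\sup_{0\le t\le T}\hat{C}^n\big(t,[\mu_{\min}+\delta,\infty)\big)=\frac{1}{\sqrt n}\int_0^T\sum_{k:\tilde\mu^n_k\ge\mu_{\min}+\delta} I^n_k(s)\,ds\ \toP\ 0,
\]
where the supremum equals the value at $T$ by monotonicity in $t$. To prove this I would track $N^n_f(s):=\sum_{k:\tilde\mu^n_k\ge\mu_{\min}+\delta}I^n_k(s)$, the number of idle fast servers. Whenever $N^n_f(s)>0$ the system has an idle server, so no customer is queued, and the next arrival is routed to the fastest idle server, which is fast; hence $N^n_f$ decreases at the full renewal-arrival rate $\lambda^n$ while it is positive. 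It can increase only when a busy fast server completes with no one waiting, an event occurring at rate at most $a_n:=\sum_{k:\tilde\mu^n_k\ge\mu_{\min}+\delta}\tilde\mu^n_k$. A standard coupling therefore dominates $N^n_f$ by the queue-length process $\tilde N$ of a single-server queue fed by a rate-$a_n$ Poisson stream (the splitting of the potential-completion process $S^n_P$) and served at the epochs of $A^n$. Conditioning on the realized rates, the law of large numbers gives $a_n/n\to\Ebb[\tilde\mu\,\mathds{1}_{\{\tilde\mu\ge\mu_{\min}+\delta\}}]$ and $\lambda^n/n\to\bar\mu$, and since $\Pbb(\tilde\mu<\mu_{\min}+\delta)>0$ (definition of the essential infimum) the limiting load $\rho_\delta<1$. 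The dominating queue is thus uniformly subcritical with macroscopic negative drift $\lambda^n-a_n=\Theta(n)$, so both its stationary part and its initial backlog $N^n_f(0)=O_p(\sqrt n)$ (which clears in time $O_p(1/\sqrt n)$) contribute only $O(1)$ to $\Ebb[\int_0^T\tilde N\,ds]$. Markov's inequality then yields the displayed convergence.

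Granting the estimate, the rest is short. Writing $G(t):=\hat{C}(t,\Rbb_+)$ for the limit of the total scaled cumulative idleness $\int_0^t\hat I^n(s)\,ds$ — recall $\hat I^n=(\hat X^n)^-$ by~\eqref{Eq:nonidling_basic}, so $G$ is positive for $t>\tau_0$ — the estimate forces all idleness mass into $[\mu_{\min},\mu_{\min}+\delta)$ for every $\delta$, i.e. the limiting idleness measure is $\hat{C}(t,\cdot)=G(t)\,\delta_{\mu_{\min}}$. Lemma~\ref{lem:equivconv} then returns $\eta_t=\hat{C}(t,\cdot)/G(t)=\delta_{\mu_{\min}}$ for $t>\tau_0$, agreeing with $\zeta$ on $\{t\le\tau_0\}$. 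One may also bypass any reference to the limit of the normalizer: for $t>\tau^n_\epsilon$ the denominator is at least $\epsilon$, so $\eta^n_t([\mu_{\min}+\delta,\infty))\le\epsilon^{-1}\hat{C}^n(t,[\mu_{\min}+\delta,\infty))\toP0$, and combining this with the tightness from Theorem~\ref{thm:shiftedtightness} shows that the shifted processes converge to the constant $\delta_{\mu_{\min}}$ in the sense of Definition~\ref{dfn:limitingprocess}.

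The main obstacle is making the stochastic-domination argument rigorous: one must justify the comparison when the deidling epochs come from a renewal rather than a Poisson stream, carry the conditioning on the random realized rates through so that the load stays below $1$ almost surely for large $n$, and check that the births of $N^n_f$ are genuinely bounded by the rate-$a_n$ potential-completion stream despite the ``no waiting customer'' qualifier. A secondary subtlety, relevant only when $\mu_{\min}$ is not an atom of $F$, is that the pointwise-in-$A$ hypothesis of Lemma~\ref{lem:equivconv} can fail on Borel sets whose boundary contains $\mu_{\min}$ (for instance $(\mu_{\min},\mu_{\min}+\delta)$); this is immaterial because $\eta_t=\delta_{\mu_{\min}}$ asserts only weak convergence of probability measures, for which controlling the tail mass on $[\mu_{\min}+\delta,\infty)$ is enough, and is precisely why I regard the direct argument through Definition~\ref{dfn:limitingprocess} as the safer route.
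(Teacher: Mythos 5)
Your reduction coincides with the paper's---both proofs rest on the single estimate $\sup_{0\le t\le T}\hat{C}^n\bigl(t,[\mu_\min+\delta,\infty)\bigr)\toP 0$, with the supremum attained at $T$ by monotonicity---but you prove that estimate by a genuinely different route. The paper decomposes the fast-idleness time set into intervals $[\alpha_i^n,\beta_i^n)$, observes that under FSF any such interval forces the number of completions by servers with rates in $A^\epsilon$ to exceed the number of arrivals on it, and kills that event via the FCLT: after $\sqrt{n}$-scaling the centered completion and arrival terms are tight while the drift term $n^{-1/2}\sum_{k}(1-\delta_{\tilde{\mu}_k^n}(A^\epsilon))\tilde{\mu}_k^n(s_2-s_1)$ diverges. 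You instead dominate the idle-fast-server count $N_f^n$ pathwise by a subcritical auxiliary queue whose births ride the rate-$a_n$ thinned potential-completion stream and whose deaths occur at the renewal arrival epochs; the same load gap $\lambda^n-a_n=\Theta(n)$ that drives the paper's divergence makes your dominating queue uniformly stable, and the $O_p(\sqrt{n})$ initial idleness clears in $O_p(n^{-1/2})$ time. Your version buys an explicit treatment of the initially idle fast servers (which the paper's claim that the \emph{probability of any} fast idleness vanishes quietly ignores---with $\xi_0<0$ possible, fast servers can be idle at time $0$, so only the scaled cumulative idleness, not its positivity, is negligible) and replaces FCLT machinery with an LLN-level, quantitative bound; the cost is the domination lemma you still owe (renewal rather than Poisson death epochs, conditioning on the realized rates, and the ``no waiting customer'' thinning of births), each point standard and correctly flagged by you. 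Your closing observation is a further genuine improvement in care: when $F$ has no atom at $\mu_\min$ one has $\hat{C}^n(t,\{\mu_\min\})=0$ identically while the candidate limit assigns mass $G(t)>0$ there, so hypothesis \eqref{Eq:ToPCondition} of Lemma~\ref{lem:equivconv} fails for Borel sets whose boundary meets $\mu_\min$ and the lemma cannot be invoked verbatim; the paper glosses over this, and your direct finish through Definition~\ref{dfn:limitingprocess}---bounding $\eta^n_t([\mu_\min+\delta,\infty))\le\epsilon^{-1}\hat{C}^n\bigl(t,[\mu_\min+\delta,\infty)\bigr)$ past $\tau_\epsilon^n$ and combining with the tightness of Theorem~\ref{thm:shiftedtightness}---is the cleaner way to conclude, since $\eta_t=\delta_{\mu_\min}$ only asserts weak convergence, for which controlling the tail mass suffices.
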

  \begin{proof}
  We prove that for any $\epsilon>0$ $A^\epsilon=[\mu_\min+\epsilon, \infty)$ and $T>0$
  \[
  \left|\hat{C}^n(t, A^\epsilon)\right|_{*, T}=\hat{C}^n(T,A^\epsilon)\stackrel{p}{\rightarrow} 0.
  \]
 We define $L^n$ as in Lemma~\ref{lem:tightness_idle} and use the fact that it can be partitioned into disjoint intervals as
  \[
 \hat{C}^n(T,A^\epsilon)=\sum_{i=0}^\infty\int_{\alpha^n_i}^{\beta^n_i}\frac{\sum_{k=1}^n\delta_{\mu_k}(A^\epsilon)I^n_k(s)ds}{\sqrt{n}} \leq \sum_{i=1}^\infty(\beta^n_i-\alpha^n_i)|\hat{I}^n|_{*,t}.
  \]
  Since, we know that $|\hat{I}^n|_{*,t}$ is tight, if we can show that $\sum_{i=1}^\infty(\beta^n_i-\alpha^n_i)\stackrel{p}{\to} 0$, our result will follow. The FSF policy guarantees that if there is an arrival when at least one of the servers with service rate in $A^\epsilon$ is idle, then the new arrival is routed to one of the servers in this set. Hence, within the interval $(\alpha_i^n,\beta_i^n)$ with $i=1, 2,\ldots$, the number of service completions by the servers with service rate in $A^\epsilon$ should be greater than the number of arrivals. Hence, 
  \begin{align*}
  \Pbb(\sum_{i=1}^\infty|\beta^n_i-\alpha^n_i|>0)&\leq \Pbb\left(\sup_{0<s_1<s_2<t}\sum_{k=1}^n \delta_{\mu_k}(A^\epsilon) \left(\frac{S_{P.k}^n(s_2)-S_{P,k}^n(s_1)}{\sqrt{n}}\right)-\frac{A(s_2)-A(s_1)}{\sqrt{n}}>0\right)\\
  &=\Pbb\left(\sup_{0<s_1<s_2<t}\sum_{k=1}^n \delta_{\mu_k}(A^\epsilon) \frac{S_{P.k}^n(s_2)-S_{P,k}^n(s_1)-\tilde{\mu}_k(s_2-s_1)}{\sqrt{n}}\right.\\
  &\qquad\qquad\qquad+\frac{(\sum_{k=1}^n\tilde{\mu}_k-n\bar{\mu})(s_2-s_1)}{\sqrt{n}}-\frac{\sum_{k=1}^n(1-\delta_{\tilde{\mu}_k}(A^\epsilon))\tilde{\mu}_k(s_2-s_1)}{\sqrt{n}}\\
  &\left.\qquad\qquad\qquad-\frac{A(s_2)-A(s_1)-\lambda_n(s_2-s_1)}{\sqrt{n}}+\frac{(n\bar{\mu}-\lambda_n)(s_2-s_1)}{\sqrt{n}}>0\right). 
  \end{align*}
  
  The term $\frac{\sum_{k=1}^n(1-\delta_{\tilde{\mu}_k}(A^\epsilon))\tilde{\mu}_k(s_2-s_1)}{\sqrt{n}}$ diverges to infinity as $n\to \infty$. Using the functional central limit theorems for Poisson and renewal processes and continuous mapping theorem for the supremum, we know that the other terms converge in distribution and hence tight. This implies that the probability approaches 0 as $n\to \infty$.
  \end{proof}
  
  As will be shown in Theorem~\ref{thm:conv_system} in the next section, the diffusion approximation provided in Theorem 2.2 in Atar~\cite{atar08} also implies the same limiting fairness measure for the fastest-server-first policy. Our result is  slightly stronger as we do not require the support of the service rates to be bounded. However, for Assumption~\ref{asm:uniformboundedness} to be satisfied for the slowest-server-first policy, we need to impose the boundedness condition. The rest of the proof of the following theorem is similar to the proof of Theorem~\ref{thm:fsf_fairness} and we omit it here. 
  
  \begin{thm}\label{thm:ssf_fairness}
    Suppose that $\zeta=\delta_{\mu_\max}$,  where $\mu_\max := \esssup(\tilde{\mu}_k^n)<\infty$. Then, the limiting fairness measure for Slowest-Server-First policy is
    \[
    \eta_t(A)=\delta_{\mu_\max}(A) \mbox{ for all }t\geq 0.
    \]
   
    \end{thm}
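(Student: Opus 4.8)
The plan is to transcribe the proof of Theorem~\ref{thm:fsf_fairness} with the roles of the slowest and fastest servers interchanged. Because the slowest-server-first (SSF) discipline keeps the slow servers occupied, all cumulative idleness should accumulate at the top of the support, which is precisely the asserted limit $\eta_t=\delta_{\mu_{\max}}$. Concretely, for each $\epsilon>0$ I would fix the complementary test set
\[
A^\epsilon:=[0,\mu_{\max}-\epsilon],
\]
consisting of the servers that are strictly slower than the fastest, and prove that
\[
\bigl|\hat{C}^n(t,A^\epsilon)\bigr|_{*,T}=\hat{C}^n(T,A^\epsilon)\toP 0,
\]
where the first equality uses monotonicity of $\hat{C}^n(\cdot,A^\epsilon)$ in $t$. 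Once this holds for every $\epsilon>0$, any Borel set bounded away from $\mu_{\max}$ carries vanishing limiting idleness, so Lemma~\ref{lem:equivconv} forces the normalized cumulative idleness measure to concentrate at $\mu_{\max}$ and yields $\eta_t=\delta_{\mu_{\max}}$ for the placeholder $\zeta=\delta_{\mu_{\max}}$.

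To establish $\hat{C}^n(T,A^\epsilon)\toP 0$, I would partition the random set of times at which some server in $A^\epsilon$ is idle into disjoint intervals $\bigcup_i[\alpha_i^n,\beta_i^n)$ and bound
\[
\hat{C}^n(T,A^\epsilon)\le\Bigl(\sum_i(\beta_i^n-\alpha_i^n)\Bigr)\bigl|\hat{I}^n\bigr|_{*,T}.
\]
Since $|\hat{I}^n|_{*,T}$ is tight by Lemma~\ref{lem:tightness_idle}, it suffices to show that $\sum_i(\beta_i^n-\alpha_i^n)\toP 0$, in fact that with high probability no such interval exists. The decisive input is the SSF routing rule: whenever a customer arrives while some server in $A^\epsilon$ is idle, the slowest idle server has rate at most $\mu_{\max}-\epsilon$ and therefore lies in $A^\epsilon$, so that arrival is routed into $A^\epsilon$. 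Hence on any interval during which $A^\epsilon$-servers experience idleness the service completions by $A^\epsilon$-servers must outpace the arrivals, which gives
\[
\Pbb\Bigl(\sum_i(\beta_i^n-\alpha_i^n)>0\Bigr)\le\Pbb\Bigl(\sup_{0<s_1<s_2<T}\Bigl[\sum_{k=1}^n\delta_{\tilde{\mu}_k^n}(A^\epsilon)\tfrac{S_{P,k}^n(s_2)-S_{P,k}^n(s_1)}{\sqrt{n}}-\tfrac{A^n(s_2)-A^n(s_1)}{\sqrt{n}}\Bigr]>0\Bigr).
\]
Decomposing the bracket into centred fluctuation terms plus a deterministic drift isolates $\frac{\sum_{k=1}^n(1-\delta_{\tilde{\mu}_k^n}(A^\epsilon))\tilde{\mu}_k^n(s_2-s_1)}{\sqrt{n}}$; because $\mu_{\max}=\esssup(\tilde{\mu}_k^n)$ gives $\Pbb(\tilde{\mu}_k^n>\mu_{\max}-\epsilon)>0$, the law of large numbers makes $\sum_{k=1}^n(1-\delta_{\tilde{\mu}_k^n}(A^\epsilon))\tilde{\mu}_k^n$ of order $n$, so this drift diverges and, entering with a sign that drives the bracketed quantity to $-\infty$, forces the probability to $0$; all remaining terms stay tight by the functional central limit theorems for the Poisson and renewal processes together with continuity of the supremum in the $J_1$ topology.

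The one genuine difference from the fastest-server-first argument, and the step I expect to be the main obstacle, is the role of the hypothesis $\mu_{\max}<\infty$. For fastest-server-first the idleness collects at the bottom of the support, so $\llangle\iota,\eta^n\rrangle$ stays near $\mu_{\min}$ and Assumption~\ref{asm:uniformboundedness} holds automatically; here the mass piles up at the top, and with unbounded support the tail $\int_M^\infty\mu\,d\eta_t^n(\mu)$ need not be small, so Assumption~\ref{asm:uniformboundedness}---and hence the tightness of the shifted processes in Theorem~\ref{thm:shiftedtightness} on which the application of Lemma~\ref{lem:equivconv} rests---may fail. Imposing $\mu_{\max}<\infty$ restores Assumption~\ref{asm:uniformboundedness}, after which every remaining step is a verbatim transcription of the proof of Theorem~\ref{thm:fsf_fairness}.
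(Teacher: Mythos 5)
Your proposal is correct and matches the paper's intended argument: the paper omits the proof of Theorem~\ref{thm:ssf_fairness}, stating it mirrors that of Theorem~\ref{thm:fsf_fairness}, and your transcription with $A^\epsilon=[0,\mu_{\max}-\epsilon]$, the routing observation that under SSF any arrival finding an idle $A^\epsilon$-server is routed into $A^\epsilon$, and the divergent drift $n^{-1/2}\sum_{k}(1-\delta_{\tilde{\mu}_k^n}(A^\epsilon))\tilde{\mu}_k^n(s_2-s_1)$ is exactly that mirrored argument. You also correctly identify why $\mu_{\max}<\infty$ is imposed --- it is needed for Assumption~\ref{asm:uniformboundedness} under SSF, precisely as the paper remarks before the theorem.
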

 
\begin{remark}\label{rem:sko}
Unfortunately, the sequences of fairness processes do not converge  in any of the four topologies provided by Skorokhod~\cite{sko56} in general. We try to provide some insight into this claim by considering fastest-server-first policy as an example.  Assume $\hat{X}^n(0)$ and their limit $\xi_0$ is bounded away from 0, hence $\tau^n_0>0$. We take $\zeta=\delta_{\mu_\min}$ and as we prove in Theorem~\ref{thm:fsf_fairness} the limiting fairness process is continuous and is also equal to $\delta_{\mu_\min}$. Choosing $A=[\mu_\min+\epsilon, \infty)$ such that $\Pbb(\tilde{\mu}_k^n\in A)>0$, 
\begin{align*}
&\Pbb\left(\sup_{0<t<T}|\eta^n(t)-\eta(t)|>\epsilon\right)\geq \Pbb\left(\exists t>0\mbox{ such that } \eta^n(t)=\delta_{\mu^*} \mbox{ where }\mu^*\in A| \CalF_0\right)=\frac{\sum_{k=1}^n \delta_{\tilde{\mu}_k^n}(A)\tilde{\mu}_k^n}{\sum_{k=1}^n \tilde{\mu}_k^n}.
\end{align*}
The equality follows using the basic properties of minimum of exponential random variables and realizing that a time $t$ as stated above exists if the first server to become idle is in set $A$.
Hence, it is not possible to find any $\Lambda(\cdot)$ to show convergence of fairness processes in $J_1$ and $J_2$ topologies. The distance between graphs of sample paths also is greater than $\epsilon$ in the case above, making convergence in $M_1$ and $M_2$ topologies impossible. 
\end{remark}  
  
 \subsection{Convergence of the System Length Process}
 
 In this section, we analyze the weak limit of the scaled system length processes $\hat{X}^n(t)$. Theorem~\ref{thm:shiftedtightness} and the completeness of the space of probability measures guarantee that for any sequence of fairness processes, there exists a subsequence with a limiting fairness process. The uniqueness of the limiting process can be proven if the finite dimensional distributions of the process converge. Henceforth, we assume that the sequence of fairness processes has a unique limiting process in the sense in Definition~\ref{dfn:limitingprocess}.
 
 A general approach in proving limit theorems for queueing systems is to use the structure of Equation~\eqref{Eq:scaled_system} in conjuction with the continuous mapping theorem  (c.f. Theorem 2.7 in Billingsley~\cite{bil99} and Section 4.1 in~\cite{ptw07}). The nonstandard definition of the limiting fairness processes in Definition~\ref{dfn:limitingprocess} makes it harder for us to use this theorem directly. The proof of the continuous mapping theorem relies on the Skorokhod Representation Theorem for which we provide an extension in the following lemma. 
 \begin{lemma}\label{lem:repext}
 Let $S_k$ be Polish spaces and $\Xi^n=(X_1^n, X_2^n, \ldots)$ be random elements where $X_k^n\in S_k$ for all $k, n\in \Nbb$ and $\{X_{k,n}\}_{n\in\Nbb}$ is tight for each $k$. There exist $n_1<n_2<\cdots$ in $\Nbb$ and random variables $Y_{nj}$ defined on the same probability space $(\hat{\Omega},\hat{\CalF}, \hat{\Pbb})$, taking values in $S_{n_j}$ such that for each $j$ the family $(Y_{kj})_{k\in \Nbb}$ has the same joint law with $(X_{kn_j})_{k\in \Nbb}$ and for each $k\in\Nbb$, $Y_{kj}$ converges almost surely as $j\to\infty$.
 \end{lemma}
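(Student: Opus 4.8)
The plan is to realize the entire sequence $\{\Xi^n\}$ as random elements of the countable product space $S:=\prod_{k\in\Nbb}S_k$ and then reduce the statement to the classical Skorokhod Representation Theorem on a single Polish space. First I would record that $S$, equipped with the product topology, is itself Polish: if $d_k$ is a complete metric inducing the topology of $S_k$, then $d(x,y):=\sum_{k\in\Nbb}2^{-k}\big(1\wedge d_k(x_k,y_k)\big)$ metrizes the product topology, and a countable product of complete separable metric spaces is again complete and separable. Each $\Xi^n=(X_1^n,X_2^n,\ldots)$ is thus an $S$-valued random element.

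The essential step is to upgrade the hypothesised coordinatewise tightness into joint tightness of $\{\Xi^n\}_{n}$ on $S$. Given $\rho>0$, the tightness of $\{X_k^n\}_n$ furnishes, for each $k$, a compact set $K_k\subset S_k$ with $\sup_n\Pbb(X_k^n\notin K_k)<\rho\,2^{-k}$. By Tychonoff's theorem $K:=\prod_{k\in\Nbb}K_k$ is compact in $S$, and a union bound gives $\sup_n\Pbb(\Xi^n\notin K)\le\sum_{k\in\Nbb}\sup_n\Pbb(X_k^n\notin K_k)<\rho$. Hence $\{\Xi^n\}_n$ is tight on $S$, and by Prokhorov's theorem there is a subsequence $n_1<n_2<\cdots$ along which $\Xi^{n_j}\Rightarrow\Xi$ for some $S$-valued random element $\Xi$.

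Next I would apply the classical Skorokhod Representation Theorem on the Polish space $S$ to the weakly convergent sequence $\Xi^{n_j}\Rightarrow\Xi$. This produces a probability space $(\hat\Omega,\hat\CalF,\hat\Pbb)$ carrying $S$-valued random elements $\tilde\Xi^j$ and $\tilde\Xi$ with $\tilde\Xi^j\stackrel{d}{=}\Xi^{n_j}$, $\tilde\Xi\stackrel{d}{=}\Xi$, and $\tilde\Xi^j\to\tilde\Xi$ $\hat\Pbb$-almost surely in $S$. Writing $Y_{kj}$ for the $k$-th coordinate of $\tilde\Xi^j$, the relation $\tilde\Xi^j\stackrel{d}{=}\Xi^{n_j}$ reads $(Y_{kj})_{k\in\Nbb}\stackrel{d}{=}(X_k^{n_j})_{k\in\Nbb}$, giving the asserted equality of joint laws for each fixed $j$. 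Finally, since convergence in the product topology of $S$ is exactly coordinatewise convergence, the almost sure convergence $\tilde\Xi^j\to\tilde\Xi$ yields, for each fixed $k$, $Y_{kj}\to(\tilde\Xi)_k$ $\hat\Pbb$-almost surely as $j\to\infty$, which is precisely the claimed coordinatewise almost sure convergence.

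The only point that genuinely requires care—and the one I would flag as the crux—is the passage from marginal to joint tightness; no joint tightness is assumed in the hypothesis, but for a countable product the Tychonoff/union-bound argument manufactures it automatically from the per-coordinate assumption. Once that is secured, the remainder is a direct invocation of the Skorokhod theorem on $S$, with the product topology doing the work of converting a single joint almost sure convergence statement into the desired family of coordinatewise convergences.
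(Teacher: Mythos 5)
Your proof is correct and takes essentially the same route as the paper's: both realize the sequence in the countable product space, upgrade coordinatewise tightness to joint tightness via a $2^{-k}$ union bound together with Tychonoff's theorem, and then invoke the classical Skorokhod Representation Theorem on the Polish product space. Your write-up is in fact slightly more careful than the paper's, making explicit the Prokhorov subsequence extraction and the identification of product-topology convergence with coordinatewise convergence.
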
 
 \begin{proof}
 Let $\tilde{S}=S_1\times S_2 \times \cdots$, an infinite Cartesian product, which is a Polish space. Then $\tilde{X}_n=(X_{1n}, X_{2n}, \cdots)$ are $\tilde{S}$-valued random elements for each $n\in \Nbb$. As $\{X_k^n\}_{n\in \Nbb}$ is tight, there exists a compact set $E_k\subset S_k$ such that $\Pbb(X_{kn}\notin E_k \text{ for some }n)<2^{-k}\epsilon$ for each $k\in \Nbb$. This implies that $\Pbb(X_{kn}\notin E_{k}\text{ for some } k,n\in\Nbb)<\epsilon$. Using Tychonoff's theorem (see Chapter 5, Munkres~\cite{mun00}), $E_1\times E_2\times \cdots$ is compact in $\tilde{S}$. Hence, we conclude that $\{\tilde{X}_n\}_{n\in \Nbb}$ is also tight. Hence, using Skorokhod's representation theorem, we can find $n_1<n_2<\cdots$ and $\tilde{Y}_j$ on a probability space $(\hat{\Omega},\hat{\CalF}, \hat{\Pbb})$ which has the same distribution as $\tilde{X}_{n_j}$ and converges almost surely as $j\to \infty$. Hence, the lemma follows. 
 \end{proof}

 Now, we are ready to prove our main result.
 \begin{thm}\label{thm:conv_system}
 Suppose that $\{\eta_{t}\}_{t\in \Rbb}$ is the limiting fairness process for the sequence of queueing systems as defined in Definition~\ref{dfn:limitingprocess}. Then, the scaled process $\hat{X}^n$ converges weakly to $\xi$, where $\xi$ is the solution of 
 \begin{equation}
 \xi(t)=\xi_0+(\bar{\mu}\sqrt{c_a^2+1}) W(t)+\beta t +\llangle \iota, \eta_t \rrangle\int_0^t\xi^-(s)ds-\gamma \int_0^t\xi ^+(s)ds, \text{ for all } t\geq 0,
 \end{equation} 
 where $c_a^2:=\text{var}(u_k^n)/\bar{\mu}^2$, i.e., the coefficient of variation of $u_k^n$s, $\beta := \hat{\lambda} - \tilde{\sigma}$, $\tilde{\sigma}$ is distributed normally with mean 0 and variance $\text{var}(\tilde{\mu})$.
 \end{thm}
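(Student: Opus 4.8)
The plan is to read off the limit term by term from the decomposition~\eqref{Eq:scaled_system}, separating the terms that do not involve the unknown paths of $\hat{X}^n$ from those that do, and then to pass to the limit in the resulting integral equation. Collecting the path-independent pieces into
\[
Y^n(t):=\hat{X}^n(0)+\frac{A^n(t)-\lambda^n t}{\sqrt{n}}+\frac{\lambda^n t-n\bar{\mu}t}{\sqrt{n}}-\frac{\sum_{k=1}^n\big(D_k^n(t)-\tilde{\mu}_k^n\int_0^t(1-I_k^n(s))ds\big)}{\sqrt{n}}-\frac{\sum_{k=1}^n(\tilde{\mu}_k^n-\bar{\mu})t}{\sqrt{n}},
\]
I would first show $Y^n\Rightarrow Y$ with $Y(t)=\xi_0+(\bar{\mu}\sqrt{c_a^2+1})W(t)+\beta t$. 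Assumption~\ref{asm:initial} controls the initial term; the functional central limit theorems for renewal and Poisson processes give that the centered arrival process and the departure martingale converge to \emph{independent} Brownian motions, whose sum is a single Brownian motion with the stated infinitesimal variance---here the predictable quadratic variation of the departure martingale converges to $\bar{\mu}t$ by Lemma~\ref{lem:cumuleffort} and the law of large numbers. Assumption~\ref{asm:HW} turns the third term into $\hat{\lambda}t$, and the classical central limit theorem applied to the i.i.d.\ rates turns the last term into $-\tilde{\sigma}t$ with $\tilde{\sigma}\sim N(0,\mathrm{var}(\tilde{\mu}))$, so the two linear drifts combine into $\beta t=(\hat{\lambda}-\tilde{\sigma})t$. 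Since $W$, $\tilde{\sigma}$ and the limiting fairness process may be mutually dependent, all these limits must be taken jointly.

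Using $\sum_{k=1}^n I_k^n(s)=(X^n(s)-n)^-$ and the definition of $\eta^n$, the identity~\eqref{Eq:scaled_system} can be rewritten as
\[
\hat{X}^n(t)=Y^n(t)+\llangle\iota,\eta_t^n\rrangle\int_0^t\hat{X}^n(s)^-\,ds-\gamma\int_0^t\hat{X}^n(s)^+\,ds+M_A^n(t),
\]
where $M_A^n$ is the scaled abandonment martingale; indeed $\llangle\iota,\eta_t^n\rrangle\int_0^t\hat{X}^n(s)^-ds=\sum_{k=1}^n\tilde{\mu}_k^n\int_0^t\hat{I}_k^n(s)ds$ by the definition of $\eta^n$ together with $\hat{I}^n(s)=\hat{X}^n(s)^-$. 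The martingale $M_A^n$ has predictable quadratic variation $\gamma\int_0^t(X^n(s)-n)^+ds/n$, which tends to $0$ in probability by Corollary~\ref{cor:fluidtight}; hence $|M_A^n|_{*,T}\toP 0$ by the Lenglart--Rebolledo inequality and this term is asymptotically negligible. In the remaining equation the maps $x\mapsto x^{\pm}$ are $1$-Lipschitz and the coefficient $\llangle\iota,\eta_t^n\rrangle$ is bounded with high probability by Assumption~\ref{asm:uniformboundedness}, so a Gronwall estimate both bounds $|\hat{X}^n|_{*,T}$ a priori and controls its increments, giving tightness of $\{\hat{X}^n\}$.

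The obstacle is that the continuous mapping theorem cannot be applied directly to the solution map of this equation, because $\eta^n$ converges in none of the Skorokhod topologies; only the shifts $\mathcal{S}_\epsilon^n\eta^n$ converge, in the sense of Definition~\ref{dfn:limitingprocess}. I would therefore fix $\epsilon>0$ and apply the extended Skorokhod representation of Lemma~\ref{lem:repext} to the jointly tight family $(Y^n,\mathcal{S}_\epsilon^n\eta^n,\tau_\epsilon^n,\hat{X}^n)$, passing to a common probability space on which, along a subsequence, $Y^n\to Y$ uniformly on compacts, $\mathcal{S}_\epsilon^n\eta^n\to\mathcal{S}_\epsilon\eta$ in $J_1$ (hence $\llangle\iota,\mathcal{S}_\epsilon^n\eta^n\rrangle\to\llangle\iota,\mathcal{S}_\epsilon\eta\rrangle$, the unbounded $\iota$ being admissible by the uniform-integrability Assumption~\ref{asm:uniformboundedness} exactly as in the corollary to Theorem~\ref{thm:shiftedtightness}), and $\tau_\epsilon^n\to\tau_\epsilon$, all almost surely. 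Because the displayed integral equation is an almost-sure pathwise identity, it is inherited by the representation copies.

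The heart of the argument, and the step I expect to be the main obstacle, is to close a Gronwall loop on this common space while absorbing the singularity at $\tau_0$. The crucial point is that $\int_0^t\hat{X}^n(s)^-ds=\hat{C}^n(t,\Rbb_+)\le\epsilon$ for $t\le\tau_\epsilon^n$, so replacing $\eta_t^n$ by its shift $\mathcal{S}_\epsilon^n\eta_t^n$ in the coefficient perturbs the idleness term by at most a fixed multiple of $\epsilon$, uniformly in $t$ (the two coefficients coincide for $t>\tau_\epsilon^n$ and are both bounded for $t\le\tau_\epsilon^n$). Setting $Z^n:=\hat{X}^n-\xi$, subtracting the limiting equation, and using the Lipschitz property of $x\mapsto x^{\pm}$ and the boundedness of the coefficients, I would reach
\[
|Z^n(t)|\le|Y^n(t)-Y(t)|+\Delta_\epsilon^n+C\int_0^t|Z^n(s)|\,ds,
\]
where $\Delta_\epsilon^n$ gathers the $O(\epsilon)$ shift error above, the quantity $\sup_{t\le T}\big|\llangle\iota,\mathcal{S}_\epsilon^n\eta_t^n\rrangle-\llangle\iota,\mathcal{S}_\epsilon\eta_t\rrangle\big|\int_0^t\xi^-(s)ds$ (which vanishes off a neighbourhood of $\tau_\epsilon$ by the $J_1$ convergence and is $O(\epsilon)$ on $[0,\tau_\epsilon]$ since there $\int_0^t\xi^-(s)ds\le\epsilon$), and the negligible $M_A^n$. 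Gronwall's inequality then yields $|Z^n|_{*,T}\le(|Y^n-Y|_{*,T}+\Delta_\epsilon^n)e^{CT}$, so $\limsup_n|Z^n|_{*,T}=O(\epsilon)$ almost surely, and letting $\epsilon\downarrow0$ gives $\hat{X}^n\to\xi$ uniformly on compacts along the subsequence. Finally, pathwise uniqueness for the limiting equation (Gronwall again, its coefficients being bounded and Lipschitz) forces every subsequential limit to equal the same $\xi$, whence $\hat{X}^n\Rightarrow\xi$.
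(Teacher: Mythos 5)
Your proposal is correct and follows essentially the same route as the paper's proof: the decomposition is exactly~\eqref{Eq:scaled_system}, the driving terms are identified by the same functional CLT and martingale arguments, and the limit passage uses Lemma~\ref{lem:repext} together with a Gronwall loop in which the discrepancy between $\eta^n$ and its $\epsilon$-shifted version contributes an $O(\epsilon)$ error sent to zero at the end, precisely the role of the paper's $4\epsilon K_{\rho/2}$ term. The only differences are bookkeeping: the paper represents all shifts $\mathcal{S}^n_{1/k}\eta^n$ jointly in one application of Lemma~\ref{lem:repext} and reconstructs $\breve{X}^n$ on the representation space directly from the system equation, thereby avoiding both your per-$\epsilon$ subsequence diagonalization and your separate (and glossed, though repairable) tightness claim for $\{\hat{X}^n\}$, while your neighbourhood-of-$\tau_\epsilon$ handling of the jump plays the part of the paper's common time changes $\Lambda^n$.
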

 \begin{proof}
 We focus on Equation~\eqref{Eq:scaled_system} and use the martingale method outlined in~\cite{ptw07}. The processes 
 \begin{align*}
 \hat{M}_1^n(t)&:=\frac{\sum_{k=1}^n D_k^n(t)-\tilde{\mu}_k^n\int_0^t(1-I_k^n(s))ds}{\sqrt{n}}\\
 \hat{M}_2^n(t)&:=\frac{N^n\left(\gamma\int_0^t(X^n(s)-n)^+ds\right)-\gamma\int_0^t(X^n(s)-n)^+ds}{\sqrt{n}},
 \end{align*}
  corresponding to the fourth and sixth terms in~\eqref{Eq:scaled_system} are martingales with predictable quadratic variations
  \begin{align*}
 \langle \hat{M}_1^n\rangle_t&=\frac{\sum_{k=1}^n\tilde{\mu}_k^n\int_0^t(1-I_k^n(s))ds}{n}\\
 \langle \hat{M}_2^n\rangle_t&=\frac{\gamma\int_0^t(X^n(s)-n)^+ds}{n},
  \end{align*}
  respectively. %The filtration should be defined properly.
  Using Corollary~\ref{cor:fluidtight},
 \[
\frac{\sum_{k=1}^n\tilde{\mu}_k t - \sum_{k=1}^n\tilde{\mu}_k\int_{0}^t I^n_k(s)ds}{n}\stackrel{p}{\rightarrow }\bar{\mu} t \text{ in }D_{\Rbb_+}[0,\infty) \mbox{ and }\left|\frac{\int_0^t(X^n(s)-n)^+ds}{n}\right|_{*,T}\stackrel{p}{\rightarrow} 0.
 \]
 Hence, using the martingale central limit theorem
 \[
 \hat{M}_1^n(t)\Rightarrow \bar{\mu} W_2(t) \text{ and } \hat{M}_2^n(t)\stackrel{p}{\to} 0.
 \]
 And as a result of central limit theorem for renewal processes and standard central limit theorem, we have
 \[
 \hat{A}^n(t)=\frac{A^n(t)-\lambda_nt}{\sqrt{n}}\Rightarrow c_a \bar{\mu}W_1(t) \text{ and }\hat{\sigma}^n=\frac{\tilde{\mu}_k^n -\bar{\mu}}{\sqrt{n}}\Rightarrow \tilde{\sigma}.
 \]
 Taking $\epsilon_k=1/k$ for all $k\in \Nbb$ and $\Xi^n=(\hat{X}^n(0)\hat{A}^n, \hat{M}_1^n, \hat{M}_2^n, \hat{\sigma}^n, \mathcal{S}^n_{\epsilon_1}\eta^{n}, \mathcal{S}^n_{\epsilon_2}\eta^{n}, \ldots)$, Lemma~\ref{lem:repext} implies that we can find $\breve{\Xi}^k=(\breve{X}^n(0),\breve{A}^k, \breve{M}_1^k, \breve{M}_2^k, \breve{\sigma}^k, \mathcal{S}^k_{\epsilon_1}\breve{\eta}^{k}, \mathcal{S}^k_{\epsilon_2}\breve{\eta}^{k}, \ldots)$ such that $\breve{\Xi}^k$ has the same distribution as $\Xi^{n_k}$ and 
 \[
 \breve{\Xi}^k\to (\breve{\xi}_0, c_a\bar{\mu}\breve{W}_1, \bar{\mu}\breve{W}_2, 0, \breve{\sigma}, \mathcal{S}_{\epsilon_1}\breve{\eta}, \mathcal{S}_{\epsilon_2}\breve{\eta}, \ldots) ,
 \]
 where $(\breve{\xi}_0, c_a\bar{\mu}\breve{W}_1, \bar{\mu}\breve{W}_2, 0, \breve{\sigma}, \mathcal{S}_{\epsilon_1}\breve{\eta}, \mathcal{S}_{\epsilon_2}\breve{\eta}, \ldots)$ has the same joint distribution as $$(\xi_0, c_a\bar{\mu}W_1, \bar{\mu}W_2, 0, \tilde{\sigma}, \mathcal{S}_{\epsilon_1}\eta, \mathcal{S}_{\epsilon_2}\eta, \ldots).$$ As the convergence occurs in Skorokhod-$J_1$ topology, Theorem 4.1 in~\cite{whi80} implies that for any fixed $\epsilon>0$ we can find a common sequence of homeomorphisms $\Lambda^n:[0,T]\to[0,T]$ such that with probability 1,
 \[
 \left|\breve{A}^n(t)-\bar{\mu}c_a\breve{W}_1(\Lambda^n(t))\right|_{*,T}\vee\left|\breve{M}_1^n(t)-\bar{\mu}\breve{W}_2(\Lambda^n(t))\right|_{*,T}\vee \left|\llangle\iota, \mathcal{S}_\epsilon\breve{\eta}^n_t\rrangle-\llangle\iota, \mathcal{S}_\epsilon\breve{\eta}_{\Lambda^n(t)}\rrangle\right|_{*,T} \vee\left|\dot{\Lambda}^n(t)-1\right|_{*,T}\to 0,
 \]
 as $n\to \infty$, where $\dot{\Lambda}^n$ denotes the derivative of $\Lambda$ with respect to $t$. Also, Assumption~\ref{asm:uniformboundedness} and the tightness proved in Lemma~\ref{lem:tightness_idle} imply that for any $\rho>0$, there exists an $K_\rho>0$ such that 
 \[
 \Pbb\left(\sup_{n\in \Nbb}\left\{\left|\llangle\iota, \breve{\eta}^n_t\rrangle\right|_{*,T}\vee \left|\breve{X}^n(t)\right|_{*,T}\right\}>K_\rho\right)<\rho.
 \] Let $\breve{\xi}(t)$ be the unique strong solution of
 \[
 \breve{\xi}(t)=\breve{\xi}_0+c_a\bar{\mu}\breve{W}_1(t) + \bar{\mu}\breve{W}_2(t) + \beta t +\llangle \iota, \breve{\eta}_t \rrangle\int_0^t\breve{\xi}^-(s)ds-\gamma \int_0^t\breve{\xi}^+(s)ds, \text{ for all } t\geq 0.
 \]
 Defining $\breve{X}$ by replacing $\Xi$ with $\breve{\Xi}$ in~\eqref{Eq:scaled_system}, we need to prove that for any $\rho, \varrho>0$, we can find a $N_{\rho, \varrho}$ such that 
 $n>N_{\rho, \varrho}$ implies
 \[
 \Pbb(d_S(\breve{X}^n, \breve{\xi})>\varrho)<\rho.
 \]
 Choosing $K_{\rho/2}$ as defined above 
 \begin{equation}
 \label{eq:convprob}
 \Pbb\left(d_S(\breve{X}^n, \breve{\xi})>\varrho\right)\leq \Pbb\left(d_S(\breve{X}^n, \breve{\xi})>\varrho, \sup_{n\in \Nbb}\left\{\left|\llangle\iota, \breve{\eta}^n_t\rrangle\right|_{*,T}\vee \left|\breve{X}^n(t)\right|_{*,T}\right\}\leq K_{\rho/2}\right)+\frac{\rho}{2}.
 \end{equation}
 Hence, we concentrate on the scenarios corresponding to the first term on the righthand side of~\eqref{eq:convprob} and assume 
 \[
 \sup_{n\in \Nbb}\left\{\left|\llangle\iota, \breve{\eta}^n_t\rrangle\right|_{*,T}\vee \left|\breve{X}^n(t)\right|_{*,T}\right\}\leq K_{\rho/2}
 \]
 For any $\varepsilon>0$, there is a sufficiently large $N_0$, such that for any $n>N_0$
 \begin{align*}
 |\breve{X}^n(t)-\breve{\xi}(\Lambda^n(t))|&\leq \varepsilon +\gamma\left|\int_0^t(\breve{X}^n(s))^+ds-\int_0^{\Lambda^n(t)}\breve{\xi}(s)^+ds\right|\\&\quad+ \left|\llangle\iota, \breve{\eta}^n_t\rrangle\int_0^t(\breve{X}^n(s))^-ds-\llangle\iota, \breve{\eta}_{\Lambda^n(t)}\rrangle\int_0^{\Lambda(t)}\breve{\xi}(s)^-ds\right|\\
 &\leq \varepsilon +\gamma\left|\int_0^t\left((\breve{X}^n(s))^+-\breve{\xi}\left(\Lambda^n(s)\right)^+\right)ds+\int_0^{t}(1-\dot{\Lambda}^n(s))\breve{\xi}\left(\Lambda^n(s)\right)^+ds\right|\\
 &\quad+ \left|\left(\llangle\iota, \breve{\eta}^n_t\rrangle-\llangle\iota, \mathcal{S}_\epsilon\breve{\eta}^n_t\rrangle\right)\int_0^t(\breve{X}^n(s))^-ds\rrangle_t\right.\\
 &\quad\quad\quad\quad\quad\quad\quad\quad\quad\quad\quad\quad\left.-\left(\llangle\iota, \breve{\eta}_{\Lambda^n(t)}\rrangle-\llangle\iota, \mathcal{S}_\epsilon\breve{\eta}_{\Lambda^n(t)}\rrangle\right)\int_0^{\Lambda(t)}\breve{\xi}(s)^-ds\right|\\
 &\quad+ \left|\left(\llangle\iota, \mathcal{S}_\epsilon\breve{\eta}^n_t\rrangle-\llangle\iota, \mathcal{S}_\epsilon\breve{\eta}_{\Lambda^n(t)}\rrangle\right)\int_0^t(\breve{X}^n(s))^-ds\right.\\
  &\quad\quad\quad\quad\quad\quad\quad\quad\quad\quad\quad\quad\left.-\llangle\iota, \mathcal{S}_\epsilon\breve{\eta}_{\Lambda^n(t)}\rrangle\int_0^{t}\left((\breve{X}^n(s))^--\breve{\xi}(\Lambda(s))^-\right)ds\right|\\
 &\quad+ \left|\llangle\iota, \mathcal{S}_\epsilon\breve{\eta}_{\Lambda^n(t)}\rrangle\int_0^{t}\dot{\Lambda}(s)\breve{\xi}(\Lambda(s))^-ds\right|\\
 &\leq \varepsilon +\gamma\int_0^t\left|(\breve{X}^n(s))-\breve{\xi}\left(\Lambda^n(s)\right)\right|ds+\varepsilon K_{\rho/2}T+4\epsilon K_{\rho/2}+ \varepsilon K_{\rho/2}\\&\quad+K_{\rho/2}\int_0^{t}\left|\breve{X}^n(s)-\breve{\xi}(\Lambda(s))\right|ds+ \varepsilon\left(K_{\rho/2}\right)^2T
 \end{align*}
 Now, using Gronwall's inequality (c.f. Lemma 4.1 in~\cite{ptw07}), we have
 \[
 d_S(\breve{X}^n, \breve{\xi})\leq \sup_{0\leq t\leq T}|\breve{X}^n(t)-\breve{\xi}(\Lambda^n(t))|\leq (\varepsilon(1+K_{\rho/2}(1+T+K_{\rho/2}T)+4\epsilon K_{\rho/2})e^{(\gamma + K_{\rho/2})T}
 \]
 Hence, choosing  $\varepsilon$ and $\epsilon$ appropriately and $N_0$ large enough, we prove our result. 
 \end{proof}

\section{Totally Blind Routing Policies}
\subsection{A Related Martingale}
In this section, our goal is to identify fairness processes corresponding to some specific policies. The key term in defining the fairness process is the cumulative idle time by time $t$ experienced by servers whose service rate fall in set $A$. To analyze the total idleness process further, we use the doubly stochastic process $\{S_P^n(t)\}_{t\geq 0}$ that represents the potential service completion times and define
\[
\phi_i^n=\inf\{t-\theta_i^n: I_{\kappa_i^n}(t)=0, t>\theta_i^n\} \text{ for all $i\in \Nbb$ and $n\in\Nbb$}.
\]
The random variable $\phi_i^n$ represents the length of the idle period experienced by the server who becomes idle at time $\theta_i^n$. If an actual service completion occurs at $\theta_i^n$, then $I_{\kappa_i^n}(\theta_i^n-)=1$ and $\phi_i^n>0$, and if $\theta_i^n$ is not an actual service completion time, i.e., $I_{\kappa_i^n}(\theta_i^n-)=0$, then $\phi_i^n=0$. Similarly, we define 
\[
\phi_{-k}^n=\inf\{t>0: I_k^n(t)=0\}, \text{for all $1\leq k \leq n$ and $n\in\Nbb$},
\]
which represents the duration of the idle period experienced by servers who are idle at time 0. Hence, defining 
\[
\Upsilon_i^n(A)=\left\{\begin{array}{ll}
1    &\text{if }\tilde{\mu}_{\kappa_i^n}\in A\\
0    &\text{otherwise},
\end{array}\right.
\]
 we can equivalently represent the total cumulative idleness experienced by servers in $A$ by time $t$ as 
\[
\sum_{k=1}^n\delta_{\tilde{\mu}_k^n}(A)\int_{0}^t I_k^n(s)ds = \sum_{k=1}^n\delta_{\tilde{\mu}_k^n}(A)(\phi_{-k}^n\wedge t)+\sum_{i=1}^{S_P^n(t)}\Upsilon_i^n(A)(\phi_i^n\wedge(t-\theta_i^n)).
\]
To analyze the righthand side further, we introduce the following martingale:

\begin{lemma}\label{lem:martingale} For any Borel set $A\subset \Rbb_+$, 
\begin{align}
\nonumber M_A^n(t)&=n^{-1/2}\left(\sum_{i=1}^{S_P^n(t)}\Upsilon_i^n(A)(\phi_i^n\wedge (t-\theta_i^n))-\sum_{i=1}^{S_P^n(t)}\Ebb\left[\Upsilon_i^n(A)\phi_i^n|\CalF_{\theta_i^n-}^n\right]\right.\\&\qquad\qquad\quad+\left.\sum_{i=1}^{S_P^n(t)}\left(\Upsilon_i^n(A)\Ebb[(\phi_i^n-t+\theta_i^n)^+|\CalF_t^n]\right)\right)
\label{Eq:Martingale}
\end{align} is an $\mathbf{F}^n$-martingale. 
\end{lemma}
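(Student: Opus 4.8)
The plan is to recognize $M_A^n$ as the difference between the optional projection of a natural non-adapted increasing process and its predictable compensator. Denote the three sums appearing in~\eqref{Eq:Martingale} by $T_1^n(t),T_2^n(t),T_3^n(t)$, so that $n^{1/2}M_A^n(t)=T_1^n(t)-T_2^n(t)+T_3^n(t)$, and set $Z_A^n(t):=\sum_{i=1}^{S_P^n(t)}\Upsilon_i^n(A)\phi_i^n$, the total idle time accrued to $A$-servers by the idle periods initiated on $[0,t]$ (each counted over its whole length, including the part after $t$). The elementary identity $\phi_i^n\wedge(t-\theta_i^n)=\phi_i^n-(\phi_i^n-t+\theta_i^n)^+$ lets me rewrite $T_1^n(t)+T_3^n(t)=\sum_{i\le S_P^n(t)}\Upsilon_i^n(A)\big(\phi_i^n\wedge(t-\theta_i^n)+\Ebb[(\phi_i^n-t+\theta_i^n)^+\mid\CalF_t^n]\big)$. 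Since $S_P^n(t)$, the epochs $\theta_i^n$, the marks $\Upsilon_i^n(A)$ (determined by $U_i^n$ as soon as $\theta_i^n\le t$) and the elapsed idle times $\phi_i^n\wedge(t-\theta_i^n)$ are all $\CalF_t^n$-measurable, while $\phi_i^n=\phi_i^n\wedge(t-\theta_i^n)+(\phi_i^n-t+\theta_i^n)^+$, the first step is to check that $T_1^n(t)+T_3^n(t)=\Ebb[Z_A^n(t)\mid\CalF_t^n]$. Hence $n^{1/2}M_A^n(t)=\Ebb[Z_A^n(t)\mid\CalF_t^n]-T_2^n(t)$, i.e.\ the optional projection of $Z_A^n$ minus its compensator $T_2^n(t)=\sum_{i\le S_P^n(t)}\Ebb[\Upsilon_i^n(A)\phi_i^n\mid\CalF_{\theta_i^n-}^n]$.

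With this representation the martingale property reduces to a single increment identity. Fix $s<t$. Because $\Ebb[Z_A^n(s)\mid\CalF_s^n]=T_1^n(s)+T_3^n(s)$ by the first step and $T_2^n(s)$ is $\CalF_s^n$-measurable, conditioning $n^{1/2}M_A^n(t)$ on $\CalF_s^n$ and applying the tower property yields $n^{1/2}M_A^n(s)$ plus the term $\Ebb\big[\sum_{s<\theta_i^n\le t}\big(\Upsilon_i^n(A)\phi_i^n-\Ebb[\Upsilon_i^n(A)\phi_i^n\mid\CalF_{\theta_i^n-}^n]\big)\mid\CalF_s^n\big]$, so it suffices to prove that this last conditional expectation vanishes. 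I would establish this term by term in $i$, writing $W_i:=\mathbf{1}\{s<\theta_i^n\le t\}\big(\Upsilon_i^n(A)\phi_i^n-\Ebb[\Upsilon_i^n(A)\phi_i^n\mid\CalF_{\theta_i^n-}^n]\big)$ and showing $\Ebb[W_i\mid\CalF_s^n]=0$; the summation then follows by conditional Fubini once integrability is in hand.

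The crux is a measurability bookkeeping around the stopping time $\theta_i^n$. Since $\theta_i^n$ is an $\mathbf{F}^n$-stopping time it is itself $\CalF_{\theta_i^n-}^n$-measurable, so both $\{\theta_i^n>s\}$ and $\{\theta_i^n\le t\}$ lie in $\CalF_{\theta_i^n-}^n$; hence $\mathbf{1}\{s<\theta_i^n\le t\}$ is $\CalF_{\theta_i^n-}^n$-measurable and may be pulled out of the inner conditional expectation, giving $\Ebb[W_i\mid\CalF_{\theta_i^n-}^n]=0$. To descend to $\CalF_s^n$, I would test against an arbitrary bounded $\CalF_s^n$-measurable $G$: the factor $\mathbf{1}\{\theta_i^n>s\}$ already present in $W_i$ makes $\mathbf{1}\{\theta_i^n>s\}G$ an $\CalF_{\theta_i^n-}^n$-measurable random variable (this is precisely the generating class $B\cap\{s<\theta_i^n\}$ of $\CalF_{\theta_i^n-}^n$ with $B\in\CalF_s^n$), so that $\Ebb[W_iG]=\Ebb[(\mathbf{1}\{\theta_i^n>s\}G)\,\Ebb[W_i\mid\CalF_{\theta_i^n-}^n]]=0$, whence $\Ebb[W_i\mid\CalF_s^n]=0$.

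I expect the main obstacle to be exactly this interaction between the random summation range $\{i:s<\theta_i^n\le t\}$ and the fact that the summands $\Upsilon_i^n(A)$ and $\phi_i^n$ become non-anticipating only after $\theta_i^n$ — they look ahead past the epoch at which we compensate — so~\eqref{Eq:Martingale} cannot be handled as an ordinary adapted compensated sum. The optional-projection reformulation of the first step is what absorbs this look-ahead, and the stopping-time measurability of the indicators $\mathbf{1}\{s<\theta_i^n\le t\}$ is what makes the term-by-term compensation go through. The remaining points are routine: integrability of $Z_A^n(t)$ and $T_2^n(t)$ (note $T_1^n(t)\le\sum_{k}\int_0^t I_k^n(s)\,ds\le nt$, while the residual pieces are controlled since at most $n$ idle periods are active at any time), which if needed can be secured by localizing at $\sigma_m:=\inf\{t:S_P^n(t)\ge m\}$, proving the local-martingale property, and then removing the localization by domination.
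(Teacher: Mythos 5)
Your proof is correct and, at its core, travels the same road as the paper's: both rest on the elementary identity $\phi_i^n\wedge(t-\theta_i^n)=\phi_i^n-(\phi_i^n-t+\theta_i^n)^+$ and on compensating the jump at each epoch $\theta_i^n$ by its $\CalF_{\theta_i^n-}^n$-conditional expectation. The difference is in organization, and in one place it is substantive in your favor. The paper checks measurability (noting $\phi_i^n+\theta_i^n$ are stopping times), secures $\Ebb[|M_A^n(t)|]<\infty$ by bounding each idle period by the hitting time of the all-busy state $\{X^n=n\}$ and invoking positive recurrence of the embedded chain $Y_k^n=X^n(\sum_{i\le k}u_i^n/\lambda^n)$, and then expands $\Ebb[M_A^n(t+s)-M_A^n(t)\mid\CalF_t^n]$ directly, concluding with a bare ``$=0$''; in that telescoping, the cancellations of the residual terms via the tower property are visible, but the vanishing of the new-epoch terms $\Upsilon_i^n(A)\phi_i^n-\Ebb[\Upsilon_i^n(A)\phi_i^n\mid\CalF_{\theta_i^n-}^n]$ for $t<\theta_i^n\le t+s$ under $\Ebb[\,\cdot\mid\CalF_t^n]$ is asserted, not argued. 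Your recasting of $T_1^n+T_3^n$ as the optional projection $\Ebb[Z_A^n(t)\mid\CalF_t^n]$ is exactly what the paper's identity achieves implicitly, and your term-by-term step is precisely the justification the paper omits: $\{s<\theta_i^n\le t\}\in\CalF_{\theta_i^n-}^n$, and for bounded $\CalF_s^n$-measurable $G$ the variable $\mathbf{1}\{\theta_i^n>s\}G$ is $\CalF_{\theta_i^n-}^n$-measurable (the generating class $B\cap\{\theta_i^n>s\}$, $B\in\CalF_s^n$), whence $\Ebb[W_i\mid\CalF_s^n]=0$; this also correctly keeps $\Upsilon_i^n(A)$ inside the conditional expectation, since $U_i^n$ is revealed only at $\theta_i^n$. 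The one place you are weaker than the paper is integrability: localizing at $\sigma_m=\inf\{t:S_P^n(t)\ge m\}$ caps the number of summands but not the lengths $\phi_i^n$, and likewise ``at most $n$ idle periods are active'' bounds a count, not a duration, so your ``remove the localization by domination'' has nothing to dominate with until $\Ebb[\phi_i^n]<\infty$ is established. That finiteness is exactly what the paper's recurrence observation delivers — under a non-idling policy every idle period ends by the next visit of $X^n$ to $n$, which has finite expected hitting time — and you should import that argument rather than gesture at domination; with it in hand, your conditional Fubini step and the whole proof go through.
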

\begin{proof}
Being conditional expectations, second and third sums on the right-hand side of \eqref{Eq:Martingale} are clearly $\CalF_t$-measurable. The random variables $(\phi_i^n+\theta_i^n)$ are stopping times for any $0<i\leq S_P(t)$, which implies that the first sum is also measurable with respect to $\CalF_t$. For any $n\geq 1$ and $t>\theta_i$, we have
\[
0\leq X^n(t)<n \text{ and }\phi_i^n-\theta<\inf\{s>t: X^n(s)=n\}.
\]
Considering the discrete-time Markov chain  $Y_k^n=X^n(\sum_{i=1}^ku_i^n/\lambda^n)$ and the positive recurrence of this chain when restricted to be between 0 and $n$ proves that 
$\Ebb[|M_t(A)^n|]<\infty$ for all $t\geq 0$. The rest of the proof relies on the basic relationship that $\phi_i^n\wedge(t-\theta_i^n)=\phi_i^n-(\phi_i^n+\theta_i^n-t)^+$. We have
\begin{align*}
\Ebb[M_A^n(t+s)-M_A^n(t)|\CalF_t]&=\Ebb\left[\sum_{i=1}^{S_P^n(t+s)}\Upsilon_i^n(A)\phi_i^n - \sum_{i=1}^{S_P^n(t+s)}\Upsilon_i^n(A) (\phi_i^n-t-s+\theta_i^n)^+\right.\\&\quad\quad\quad-\sum_{i=1}^{S_P^n(t+s)}\Ebb[\Upsilon_i^n(A)\phi_i^n|\CalF_{\theta_i^n-}] + \sum_{i=1}^{S_P^n(t+s)}\Ebb[\Upsilon_i^n(A)(\phi_i^n-t-s+\theta_i^n)^+|\CalF_{t+s}]\\ &\quad\quad\quad-\sum_{i=1}^{S_P^n(t)}\Upsilon_i^n(A)\phi_i^n + \sum_{i=1}^{S_P^n(t)}\Upsilon_i^n(A) (\phi_i^n-t+\theta_i^n)^+\\
&\quad\quad\quad+\sum_{i=1}^{S_P^n(t)}\Ebb[\Upsilon_i^n(A)\phi_i^n|\CalF_{\theta_i^n-}]-\left.\left.\sum_{i=1}^{S_P^n(t)}\Ebb[\Upsilon_i^n(A)(\phi_i^n-t+\theta_i^n)^+|\CalF_{t}]\right|\CalF_t\right]\\
&=\Ebb\left[ \sum_{i=S_P^n(t)+1}^{S_P^n(t+s)}\Upsilon_i^n(A)\phi_i^n - \sum_{i=1}^{S_P^n(t)}\Upsilon_i^n(A) (\phi_i^n-t-s+\theta_i^n)^+\right.\\
&\quad\quad\quad -\sum_{i=S_P^n(t)+1}^{S_P^n(t+s)}\Upsilon_i^n(A) (\phi_i^n-t-s+\theta_i^n)^+-\sum_{i=S_P^n(t)+1}^{S_P^n(t+s)}\Ebb[\Upsilon_i^n(A)\phi_i^n|\CalF_{\theta_i^n-}] \\
&\quad\quad\quad + \sum_{i=1}^{S_P^n(t)}\Ebb[\Upsilon_i^n(A)(\phi_i^n-t-s+\theta_i^n)^+|\CalF_{t+s}]\\&\quad\quad\quad+\sum_{i=S_P^n(t)+1}^{S_P^n(t+s)}\Ebb[\Upsilon_i^n(A)(\phi_i^n-t-s+\theta_i^n)^+|\CalF_{t+s}]
+ \sum_{i=1}^{S_P^n(t)}\Upsilon_i^n(A) (\phi_i^n-t+\theta_i^n)^+\\&\quad\quad\quad-\left.\left.\sum_{i=1}^{N^n(t)}\Ebb[\Upsilon_i^n(A)(\phi_i^n-t+\theta_i^n)^+|\CalF_{t}]\right|\CalF_t\right]=0
\end{align*}
This proves the lemma. 
\end{proof}

The $\hat{C}^n(t,A)$ is a continuous function of time $t$ for any $A\in \CalB(\Rbb_{+})$ and hence a previsible process. Hence, in Lemma~\ref{lem:martingale}, rather than finding a compensator for the cumulative total idleness process, we find a process who is compensator is the cumulative total idleness process and hence there is no uniqueness claim.

\begin{lemma}\label{lem:limrem}
If for any $T>0$, there exist random variables $\vartheta_{n,T}$ such that $\Ebb[\vartheta_{T}^n]\to 0$ as $n\to \infty$ and 
 \begin{equation}
\sup_{0\leq t \leq T}\sup_{-n\leq i\leq S_P^n(t)}\Ebb\left[(\phi_i^n-t+\theta_i^n)^+|\CalF_{t}^n\right]<\vartheta_{T}^n,
\label{Eq:remainder_cond}
\end{equation}
then 
\[
\sup_{0\leq t\leq S_P^n(T)}\left|\hat{C}^n(t,A)-\frac{\sum_{i=1}^{S_P^n(t)}\Ebb[\Upsilon_i^n(A)\phi_i^n|\CalF_{\theta_i^n-}]}{\sqrt{n}}\right|\toP 0.
\]
\end{lemma}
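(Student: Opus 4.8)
The plan is to read off the result from the martingale identity \eqref{Eq:Martingale}. Solving that identity for the first sum and substituting the pathwise decomposition
\[
\sum_{k=1}^n\delta_{\tilde{\mu}_k^n}(A)\int_{0}^t I_k^n(s)ds = \sum_{k=1}^n\delta_{\tilde{\mu}_k^n}(A)(\phi_{-k}^n\wedge t)+\sum_{i=1}^{S_P^n(t)}\Upsilon_i^n(A)(\phi_i^n\wedge(t-\theta_i^n))
\]
into $\hat{C}^n(t,A)=n^{-1/2}\sum_k\delta_{\tilde{\mu}_k^n}(A)\int_0^tI_k^n(s)ds$ produces the exact identity
\begin{align*}
\hat{C}^n(t,A)-\frac{\sum_{i=1}^{S_P^n(t)}\Ebb[\Upsilon_i^n(A)\phi_i^n|\CalF_{\theta_i^n-}]}{\sqrt{n}}
&=\frac{\sum_{k=1}^n\delta_{\tilde{\mu}_k^n}(A)(\phi_{-k}^n\wedge t)}{\sqrt{n}}+M_A^n(t)\\
&\quad-\frac{\sum_{i=1}^{S_P^n(t)}\Upsilon_i^n(A)\Ebb[(\phi_i^n-t+\theta_i^n)^+|\CalF_t^n]}{\sqrt{n}}.
\end{align*}
Write $R_1^n(t)$ for the initial-idleness term on the right and $R_2^n(t)$ for the overshoot term subtracted at the end. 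It then suffices to show that $\sup_{0\le t\le T}R_1^n(t)$, $\sup_{0\le t\le T}R_2^n(t)$ and $\sup_{0\le t\le T}|M_A^n(t)|$ each converge to $0$ in probability.

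For the overshoot term $R_2^n$, the key observation is that $\Ebb[(\phi_i^n-t+\theta_i^n)^+|\CalF_t^n]$ is positive only when the idle spell initiated at the potential-completion epoch $\theta_i^n$ is still in progress at time $t$; since every server that is idle at $t$ has exactly one such live spell, at most $\sum_{k=1}^nI_k^n(t)=\sqrt{n}\,\hat{I}^n(t)$ summands are nonzero. Bounding each of them by $\vartheta_T^n$ through \eqref{Eq:remainder_cond} and using $\Upsilon_i^n(A)\le 1$ gives $\sup_{0\le t\le T}R_2^n(t)\le |\hat{I}^n|_{*,T}\,\vartheta_T^n$. Since $\Ebb[\vartheta_T^n]\to0$ forces $\vartheta_T^n\toP 0$ by Markov's inequality, while $\{|\hat{I}^n|_{*,T}\}_{n\in\Nbb}$ is tight by Lemma~\ref{lem:tightness_idle}, the product tends to $0$ in probability.

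The initial-idleness term $R_1^n$ is treated through its conditional mean given $\CalF_0^n$. As $\phi_{-k}^n\wedge t$ is nondecreasing in $t$ and dominated by $\phi_{-k}^n$, we have $\sup_{0\le t\le T}R_1^n(t)\le n^{-1/2}\sum_{k=1}^nI_k^n(0)\phi_{-k}^n$. Taking $\theta_{-k}^n=0$ and $t=0$ in \eqref{Eq:remainder_cond} yields $\Ebb[\phi_{-k}^n|\CalF_0^n]\le\vartheta_T^n$, hence $\Ebb[\phi_{-k}^n|\CalF_0^n]\le\Ebb[\vartheta_T^n|\CalF_0^n]=:g_n$ with $\Ebb[g_n]=\Ebb[\vartheta_T^n]\to0$. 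Summing over the $\sum_kI_k^n(0)=(X^n(0)-n)^-$ initially idle servers gives the conditional bound $\Ebb[n^{-1/2}\sum_kI_k^n(0)\phi_{-k}^n\mid\CalF_0^n]\le(\hat{X}^n(0))^- g_n$. A conditional Markov inequality, together with a truncation of $(\hat{X}^n(0))^-$ at a level $M$ (legitimate since $(\hat{X}^n(0))^-$ is tight by Assumption~\ref{asm:initial}) and $\Ebb[g_n]\to0$, then forces $\sup_{0\le t\le T}R_1^n(t)\toP 0$.

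The remaining and principal task is $\sup_{0\le t\le T}|M_A^n(t)|\toP 0$. Since $M_A^n(0)=0$ (all three sums are empty) and $M_A^n$ is an $\mathbf{F}^n$-martingale by Lemma~\ref{lem:martingale}, Doob's $L^2$ inequality reduces this to $\Ebb[\langle M_A^n\rangle_T]\to0$. The increments are generated at the epochs $\theta_i^n$ and carry, modulo the residual corrections already folded into the definition, the fluctuation $n^{-1/2}\Upsilon_i^n(A)(\phi_i^n-\Ebb[\phi_i^n|\CalF_{\theta_i^n-}])$, so by orthogonality their contribution to $\Ebb[\langle M_A^n\rangle_T]$ is of order $n^{-1}\sum_{i=1}^{S_P^n(T)}\Ebb[\Upsilon_i^n(A)(\phi_i^n\wedge(T-\theta_i^n))^2]$. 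The crucial manoeuvre is to convert this \emph{second} moment of idle durations into the \emph{first}-moment control \eqref{Eq:remainder_cond}: applying $\Ebb[\varphi^2]=2\int_0^\infty\Ebb[(\varphi-s)^+]\,ds$ to $\varphi=\phi_i^n\wedge(T-\theta_i^n)$ and pushing \eqref{Eq:remainder_cond} down to $\CalF_{\theta_i^n-}$ by the tower property bounds each such second moment by $2T\,\Ebb[\vartheta_T^n\mid\CalF_{\theta_i^n-}]$. Because only $O(n)$ potential-completion events fall in $[0,T]$, the prefactor $n^{-1}$ matches this count and leaves $\Ebb[\langle M_A^n\rangle_T]=O(\Ebb[\vartheta_T^n])\to0$. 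I expect this last step to be the main obstacle: one must rigorously identify the predictable quadratic variation of the nonstandard martingale $M_A^n$ and carry out the second-to-first moment reduction uniformly over the randomly many idle spells (so that the randomness of $S_P^n(T)$ and its correlation with $\vartheta_T^n$ are handled), whereas the estimates for $R_1^n$ and $R_2^n$ are comparatively routine consequences of \eqref{Eq:remainder_cond}, Lemma~\ref{lem:tightness_idle} and Assumption~\ref{asm:initial}.
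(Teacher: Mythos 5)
Your reduction of the lemma to the three terms $R_1^n$, $R_2^n$ and $M_A^n$ is exactly the paper's strategy, and two of your three estimates are sound. Your bound for $R_2^n$ (at most $\sqrt{n}\,\hat{I}^n(t)$ live idle spells at time $t$, each conditional expectation bounded by $\vartheta_T^n$, giving $\sup_{0\le t\le T}R_2^n(t)\le |\hat{I}^n|_{*,T}\vartheta_T^n\toP 0$ by Lemma~\ref{lem:tightness_idle} and Markov) coincides with the paper's treatment of that term. Your handling of the initial-idleness term $R_1^n$, via conditioning on $\CalF_0^n$, the bound $(\hat{X}^n(0))^-\,\Ebb[\vartheta_T^n|\CalF_0^n]$ and a truncation using the tightness from Assumption~\ref{asm:initial}, is actually more explicit than the paper, whose proof leaves that term implicit in the condition $-n\le i$ of \eqref{Eq:remainder_cond}.

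The gap is in the martingale step, and it is more than a matter of rigor: the ``orthogonality'' reduction to the diagonal is false as stated. For $i\le S_P^n(t)$ one has $\Upsilon_i^n(A)(\phi_i^n\wedge(t-\theta_i^n))+\Upsilon_i^n(A)\Ebb[(\phi_i^n-t+\theta_i^n)^+|\CalF_t^n]=\Ebb[\Upsilon_i^n(A)\phi_i^n|\CalF_t^n]$, so $M_A^n(t)=n^{-1/2}\sum_{i\le S_P^n(t)}\bigl(\Ebb[\Upsilon_i^n(A)\phi_i^n|\CalF_t^n]-\Ebb[\Upsilon_i^n(A)\phi_i^n|\CalF_{\theta_i^n-}^n]\bigr)$: the per-event contributions are increments of conditional-expectation martingales running over \emph{overlapping} time intervals, since many servers are idle simultaneously. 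Hence for $i<j$ the cross term $\Ebb[\zeta_i\zeta_j]$ (with $\zeta_i$ the full contribution of event $i$ at time $T$) does not vanish: conditioning on $\CalF_{\theta_j^n-}$ kills only the part of $\zeta_i$ already measurable at $\theta_j^n-$, and the covariance of the two increments over the common interval $[\theta_j^n,T]$ survives. Controlling precisely these overlap contributions is the hard content of the paper's proof: it bounds the \emph{optional} quadratic variation $[M_A^n]$ along partitions, splits it into the four terms $V_1^n,\dots,V_4^n$, and kills the overlap term $V_4^n$ using the tightness of $|I^n|_{*,T}^2/n$ together with the Burkholder--Davis--Gundy inequality applied to the martingales $\Ebb[\phi_i^n|\CalF_t^n]$, before invoking the martingale FCLT (whose maximum-jump condition is also extracted from \eqref{Eq:remainder_cond}). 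Your diagonal bound $n^{-1}\sum_i\Ebb[\Upsilon_i^n(A)(\phi_i^n\wedge(T-\theta_i^n))^2]$, and the neat second-to-first-moment conversion via $\varphi^2=2\int_0^\infty(\varphi-s)^+ds$, would handle only the $V_1^n$-type (pure jump) contribution. A second, lesser, problem with the Doob-$L^2$ route: it requires square-integrability of $M_A^n$ and a bound on quantities like $\Ebb[\vartheta_T^n\,S_P^n(T)/n]$, neither of which follows from the hypothesis $\Ebb[\vartheta_T^n]\to 0$ alone (it supplies no second moments and no decoupling of $\vartheta_T^n$ from $\sum_k\tilde{\mu}_k^n$); the paper sidesteps both by arguing convergence in probability throughout ($S_P^n(T)/n$ tight, $\sup_i\Ebb[\phi_i^n|\CalF_{\theta_i^n-}^n]\toP 0$) rather than in expectation.
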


\begin{proof}
To prove our result we show that the martingale $M_A^n$ and the third term on the righthand side of~\eqref{Eq:Martingale} converge to 0 in probability under the stated conditions. Condition~\eqref{Eq:remainder_cond} imply that for all $T\geq 0$ and $n\in \Nbb$, $\vartheta_{T}^n$ is positive and converges 0 in probability using Markov inequality.  
If the expected maximum jump and the optional quadratic variation of $M_A^n$ converges 0, the latter being in probability, the martingale can be shown to converges 0 in probability using the martingale central limit theorem (c.f. Theorem 8.1 in \cite{ptw07}). We first note that \eqref{Eq:remainder_cond} also implies 
\[
\sup_{1\leq i\leq S_P^n(T)} \Ebb\left[\phi_i^n|\CalF_{\theta_i^n-}^n\right]\toP 0, 
\]
which also imply the expected maximum jump of $M_A^n$ converges 0. Expanding the optional quadratic variation of $M_A^n$, we get
\begin{align}
\nonumber \left[M_A^n\right]&\leq  \frac{2\left[\sum_{i=1}^{S_P^n(t)}\Upsilon_i^n(A)(\phi_i^n\wedge (t-\theta_i^n))\right]}{n}+\frac{2\left[ \sum_{i=1}^{S_P^n(t)}\Ebb\left[\Upsilon_i^n(A)\phi_i^n|\CalF_{\theta_i^n-}^n\right]\right]}{n}\\&\quad+\frac{2\left[ \sum_{i=1}^{S_P^n(t)}\left(\Upsilon_i^n(A)\Ebb[(\phi_i^n-t+\theta_i^n)^+|\CalF_t^n]\right)\right]}{n}.
\label{Eq:OQV}
\end{align}
The first term on the righthand side of~\eqref{Eq:OQV} is the optional quadratic variation of a continuously increasing process and hence is 0 for all $T\geq 0$ and $n\in \Nbb$. The second term is the optional quadratic variation of an increasing pure jump process and 

\[
\displaystyle\frac{\left[ \displaystyle\sum_{i=1}^{S_P^n(t)}\Ebb\left[\Upsilon_i^n(A)\phi_i^n|\CalF_{\theta_i^n-}^n\right]\right]}{n}=\frac{\displaystyle\sum_{i=1}^{S_P^n(t)}\Ebb[\Upsilon_i^n(A)\phi_i^n|\CalF_{\theta_i^n-}]^2}{n}\leq \left(\sup_{1\leq i\leq S_P^n(t)}\Ebb[\Upsilon_i^n(A)\phi_i^n|\CalF_{\theta_i^n-}]\right)^2\displaystyle \frac{S_P^n(t)}{n}\toP 0.
\]
For analyzing the third term, we define  $\varpi_i^n(t)=\inf\{I_{\kappa_i}(s): \theta_i^n\leq s\leq t\}$, for $i\leq S_P^n(t), n\in \Nbb$, which is 1 if the server completing the service at $\theta_i^n$ stays idle until time $t$ and is 0 otherwise. The function $\varpi_i^n(t)$ enables us to explicitly address only the servers who are idle at time $t$ as for any $\theta_i^n\leq S_P^n(t)$, 
\[
\Ebb[(\phi_i^n-t+\theta_i)^+|\CalF_{t}] =\varpi_i(t)\Ebb[(\phi_i^n-t+\theta_i)^+|\CalF_{t}].
\]
We now analyze the optional quadratic variation of the third term on the righthand side of \eqref{Eq:OQV} directly using the definition (c.f. Theorem 3.3 in \cite{ptw07}). Consider any finite partition of $[0,T]$ such that $0=t_0<t_1<\cdots<t_m=T$ and
\[
\begin{array}{l}
\displaystyle n^{-1}\sum_{l=1}^m\left( \sum_{i=1}^{S_P(t_l)}\Upsilon_i(A)\varpi_i(t_l)\Ebb[(\phi_i^n-t_l+\theta_i)^+|\CalF_{t_l}] - \sum_{i=1}^{S_P(t_{l-1})}\Upsilon_i(A)\varpi_i(t_{l-1})\Ebb[(\phi_i^n-t_{l-1}+\theta_i)^+|\CalF_{t_{l-1}}]\right)^2\\
=n^{-1}\displaystyle\sum_{l=1}^m\left( \sum_{i=S_P(t_{l-1}+1)}^{S_P(t_l)}\Upsilon_i(A)\varpi_i(t_l)\Ebb[(\phi_i^n-t_l+\theta_i)^+|\CalF_{t_l}]\right.\\\quad\quad\quad\quad + \left. \displaystyle\sum_{i=1}^{S_P(t_{l-1})}\Upsilon_i(A)\left(\varpi_i(t_{l})\Ebb[(\phi_i^n-t_{l}+\theta_i)^+|\CalF_{t_{l}}]-\varpi_i(t_{l-1})\Ebb[(\phi_i^n-t_{l-1}+\theta_i)^+|\CalF_{t_{l-1}}]\right)\right)^2\\
\displaystyle\leq 2n^{-1}\sum_{l=1}^m\left( \sum_{i=S_P(t_{l-1}+1)}^{S_P(t_l)}\Upsilon_i(A)\varpi_i(t_l)\Ebb[(\phi_i^n-t_l+\theta_i)^+|\CalF_{t_l}]\right)^2\\
\displaystyle\quad\quad\quad + 2n^{-1}\sum_{l=1}^m \left( \sum_{i=1}^{S_P(t_{l-1})}\Upsilon_i(A)\left(\varpi_i(t_{l})\Ebb[(\phi_i^n-t_{l}+\theta_i)^+|\CalF_{t_{l}}]-\varpi_i(t_{l-1})\Ebb[(\phi_i^n-t_{l-1}+\theta_i)^+|\CalF_{t_{l-1}}]\right)\right)^2
\end{array}\]
To analyze  the second term further, we realize that  $\varpi_i(t_l)\leq\varpi_i(t_{l-1})$ for any $i<S_P(t_{l-1})$ and write 
\begin{align*}
&\sum_{l=1}^m \left( \sum_{i=1}^{S_P(t_{l-1})}\Upsilon_i(A)\left(\varpi_i(t_{l})\Ebb[(\phi_i^n-t_{l}+\theta_i)^+|\CalF_{t_{l}}]-\varpi_i(t_{l-1})\Ebb[(\phi_i^n-t_{l-1}+\theta_i)^+|\CalF_{t_{l-1}}]\right)\right)^2\\
&=\sum_{l=1}^m \left(-\sum_{i=1}^{S_P(t_{l-1})}\Upsilon_i(A)(1-\varpi_i(t_l))\varpi_i(t_{l-1})\Ebb[(\phi_i^n-t_{l-1}+\theta_i)^+|\CalF_{t_{l-1}}]\right.\\
&\quad\quad\quad+\left.\sum_{i=1}^{S_P(t_{l-1})}\Upsilon_i(A)(\varpi_i(t_l))\varpi_i(t_{l-1})\left(\Ebb[(\phi_i^n-t_{l}+\theta_i)^+|\CalF_{t_{l}}]-\Ebb[(\phi_i^n-t_{l-1}+\theta_i)^+|\CalF_{t_{l-1}}]\right)\right)^2\\
&\leq 2\sum_{l=1}^m \left(\sum_{i=1}^{S_P(t_{l-1})}\Upsilon_i(A)(1-\varpi_i(t_l))\varpi_i(t_{l-1})\Ebb[(\phi_i^n-t_{l-1}+\theta_i)^+|\CalF_{t_{l-1}}]\right)^2\\
&\quad\quad\quad+2\sum_{l=1}^m \left(\sum_{i=1}^{S_P(t_{l-1})}\Upsilon_i(A)(\varpi_i(t_l))\varpi_i(t_{l-1})\left(\Ebb[(\phi_i^n-t_{l}+\theta_i)^+|\CalF_{t_{l}}]-\Ebb[(\phi_i^n-t_{l-1}+\theta_i)^+|\CalF_{t_{l-1}}]\right)\right)^2\\
\end{align*}
Again considering the second term on the righthand side, we have
\begin{align*}
&\sum_{l=1}^m \left(\sum_{i=1}^{S_P(t_{l-1})}\Upsilon_i(A)(\varpi_i(t_l))\varpi_i(t_{l-1})\left(\Ebb[(\phi_i^n-t_{l}+\theta_i)^+|\CalF_{t_{l}}]-\Ebb[(\phi_i^n-t_{l-1}+\theta_i)^+|\CalF_{t_{l-1}}]\right)\right)^2\\
&\leq 2\sum_{l=1}^m \left(\sum_{i=1}^{S_P(t_{l-1})}\Upsilon_i(A)(\varpi_i(t_l))\varpi_i(t_{l-1})\left(\Ebb[(\phi_i^n-t_{l}+\theta_i)^+|\CalF_{t_{l}}]-\Ebb[(\phi_i^n-t_{l-1}+\theta_i)^+|\CalF_{t_{l}}]\right)\right)^2\\
&\quad\quad+2\sum_{l=1}^m \left(\sum_{i=1}^{S_P(t_{l-1})}\Upsilon_i(A)(\varpi_i(t_l))\varpi_i(t_{l-1})\left(\Ebb[(\phi_i^n-t_{l-1}+\theta_i)^+|\CalF_{t_{l}}]-\Ebb[(\phi_i^n-t_{l-1}+\theta_i)^+|\CalF_{t_{l-1}}]\right)\right)^2\\
&\leq 2\sum_{l=1}^m \left(\sum_{i=1}^{S_P(t_{l-1})}\Upsilon_i(A)(\varpi_i(t_l))\varpi_i(t_{l-1})\left(t_l-t_{l-1}\right)\right)^2\\
&\quad\quad+2\sum_{l=1}^m \left(\sum_{i=1}^{S_P(t_{l-1})}\Upsilon_i(A)(\varpi_i(t_l))\varpi_i(t_{l-1})\left(\Ebb[(\phi_i^n-t_{l-1}+\theta_i)^+|\CalF_{t_{l}}]-\Ebb[(\phi_i^n-t_{l-1}+\theta_i)^+|\CalF_{t_{l-1}}]\right)\right)^2\\
\end{align*}
Aggregating all our calculations above, we get  
\[
\frac{\left[ \sum_{i=1}^{S_P^n(t)}\left(\Upsilon_i^n(A)\Ebb[(\phi_i^n-t+\theta_i^n)^+|\CalF_t^n]\right)\right]}{n}\leq 2V_1^n(t)+4V_2^n(t)+8(V_3^n(t)+V_4^n(t)),
\]
where
\begin{align*}
V_1^n(t)&=\frac{\displaystyle\lim_{\sup|t_l-t_{l-1}|\to 0}\sum_{l=1}^m\left( \sum_{i=S_P(t_{l-1}+1)}^{S_P^n(t_l)}\Upsilon_i(A)\varpi_i(t_l)\Ebb[(\phi_i^n-t_l+\theta_i)^+|\CalF_{t_l}]\right)^2}{n},\\
V_2^n(t)&=\frac{\displaystyle\lim_{\sup|t_l-t_{l-1}|\to 0}\sum_{l=1}^m \left(\sum_{i=1}^{S_P^n(t_{l-1})}\Upsilon_i(A)(1-\varpi_i(t_l))\varpi_i(t_{l-1})\Ebb[(\phi_i^n-t_{l-1}+\theta_i)^+|\CalF_{t_{l-1}}]\right)^2}{n},\\
V_3^n(t)&=\frac{\displaystyle\lim_{\sup|t_l-t_{l-1}|\to 0} \sum_{l=1}^m \left(\sum_{i=1}^{S_P^n(t_{l-1})}\Upsilon_i(A)\varpi_i(t_l)\varpi_i(t_{l-1})\left(t_l-t_{l-1}\right)\right)^2}{n},\\
V_4^n(t)&=\frac{\displaystyle\lim_{\sup|t_l-t_{l-1}|\to 0} \sum_{l=1}^m \left(\sum_{i=1}^{S_P^n(t_{l-1})}\Upsilon_i(A)\varpi_i(t_l)\varpi_i(t_{l-1})\left(\Ebb[(\phi_i^n-t_{l-1}+\theta_i)^+|\CalF_{t_{l}}]-\Ebb[(\phi_i^n-t_{l-1}+\theta_i)^+|\CalF_{t_{l-1}}]\right)\right)^2}{n}.
\end{align*}
The first term $V_1^n$ is the definition of the quadratic variation of a pure jump process and hence,
\[
V_1^n(t)= \frac{\sum_{i=1}^{S_P(t)}\left(\Upsilon_i(A)\Ebb[\phi_i^n|\CalF_{\theta_i^n}]\right)^2}{n}= \left(\sup_{1\leq i\leq S_P^n(t)}\Ebb[\Upsilon_i^n(A)\phi_i^n|\CalF_{\theta_i^n}]\right)^2\displaystyle \frac{S_P^n(t)}{n}\toP 0.
\]
For $V_2^n(t)$,
\[
V_2^n(t)\leq \left(\vartheta_{T}^n\right)^2\frac{\displaystyle\lim_{\sup|t_l-t_{l-1}|\to 0}\sum_{l=1}^m \left(\sum_{i=1}^{S_P(t_{l-1})}\Upsilon_i(A)(1-\varpi_i(t_l))\varpi_i(t_{l-1})\right)^2}{n}
\]
The fraction above defines the optional  quadratic variation of a counting process, which counts the number of  customers routed to a server by time $t$. This counting process is bounded by the arrival process $A^n(t)$ and hence \eqref{Eq:remainder_cond} implies that $V_2^n(t)\toP 0$ in $\Dbb_{\Rbb}[0,T]$. 

For $V_3^n(t)$, write
\begin{align*}
V_3^n(t)&\leq \frac{\displaystyle\lim_{\sup|t_l-t_{l-1}|\to 0} S_P(t)\sum_{l=1}^m \sum_{i=1}^{S_P(t_{l-1})}\left(\Upsilon_i(A)\varpi_i(t_l)\varpi_i(t_{l-1})\left(t_l-t_{l-1}\right)\right)^2}{n}\\
&\leq \frac{\displaystyle 2S_P(t) \sum_{i=1}^{S_P(t)}\displaystyle\lim_{\sup|t_l-t_{l-1}|\to 0}\sum_{l=1}^m\left(t_l-t_{l-1}\right)^2}{n}=0\\
\end{align*}
For the analysis of $V_4^n(t)$, we use a similar approach but this time making explicit use of the fact that the non-zero terms in the summation at any $t_{l-1}$ is bounded by  the number of idle servers at that time. Also, 
\begin{align*}
V_4^n(t)&\leq \lim_{\sup|t_l-t_{l-1}|\to 0} \sum_{l=1}^m 2I^n(t_l)\sum_{i=1}^{S_P(t_{l-1})}\left(\frac{\displaystyle\Upsilon_i(A)\varpi_i(t_l)\varpi_i(t_{l-1})\Ebb[(\phi_i^n-t_{l-1}+\theta_i)^+|\CalF_{t_{l}}]}{n}\right.\\
&\qquad\qquad\qquad\qquad\qquad\qquad\qquad\qquad\qquad\left.-\frac{\Upsilon_i(A)\varpi_i(t_l)\varpi_i(t_{l-1})\Ebb[(\phi_i^n-t_{l-1}+\theta_i)^+|\CalF_{t_{l-1}}]}{n}\right)^2\\
&\leq \frac{2|I^n|_{t,*}^2 \displaystyle\lim_{\sup|t_l-t_{l-1}|\to 0} \sum_{l=1}^m\left(\Ebb[(\phi_i^n|\CalF_{t_{l}}]-\Ebb[\phi_i^n|\CalF_{t_{l-1}}]\right)^2}{n}\\
\end{align*}
Theorem~\ref{lem:cumuleffort} imply that $|I^n|_{t,*}^2/n$ is tight. The limit and summation is the optional quadratic variation of the square integrable martingale $E[\phi_i^n|\CalF_t]$ and using Burkholder-Davis-Gundy inequality (Theorem iV.48 in~\cite{protter05}) with $p=1$ 
\[
\left[\Ebb[\phi_i^n|\CalF_t^n]\right]\leq c_1\Ebb[\sup_{0\leq s\leq t}\sup_{1\leq i\leq S_P^n(t)}\Ebb[\phi_i^n|\CalF_s^n]]\leq c_1\Ebb[\vartheta_{T}^n]
\]
and hence the optional quadratic variation of $E[\phi_i^n|\CalF_t]$ converges 0 in probability. This proves that $[M_A^n]\toP 0$ in probability and using  the martingale functional central limit theorem we conclude that $M_A^n(t)\toP 0$. Using a similar approach, the term on the righhand side of ~\eqref{Eq:Martingale} 
\[
\sum_{i=1}^{S_P^n(t)}\left(\Upsilon_i^n(A)\Ebb[(\phi_i^n-t+\theta_i^n)^+|\CalF_t^n]\right)\leq |\hat{I}^n|_{T,*}\vartheta_{T}^n\toP 0.
\]
This implies 
\[
n^{-1/2}\left(\sum_{i=1}^{S_P^n(t)}\Upsilon_i^n(A)(\phi_i^n\wedge (t-\theta_i^n))-\sum_{i=1}^{S_P^n(t)}\Ebb\left[\Upsilon_i^n(A)\phi_i^n|\CalF_{\theta_i^n-}^n\right]\right)\toP 0\mbox{ in }\Dbb_{\Rbb}[0,T].
\]
Since the limiting function is constant and hence continuous, we conclude that the same convergence also holds with the supremum norm, which proves the lemma.
\end{proof}

\subsection{The Fairness Measure for Totally Blind Policies}
We are now ready to address specific policies when service rates are random. Atar~\cite{atar08} proves that for the Longest-Idle-Server-First policy, the system length process converges to a diffusion as given in \eqref{dfn:limitingprocess}, where 
\[\llangle \iota, \eta \rrangle_t=\frac{\int_{0}^\infty \mu^2dF(\mu)}{\int_{0}^\infty \mu dF(\mu)}, \text{ for all } t\geq 0.\]
This implies that the limiting fairness process is constant through time and 
\[
\eta_t(A)=\frac{\int_A \mu dF(\mu)}{\int_0^\infty \mu dF(\mu)}, \text{for all }t\geq 0.
\]
In this section, under some mild conditions which readily holds for the LISF policy, we prove that the scaled system length process converges to the same limiting diffusion for a more general class of policies, which we call to be ``totally blind policies''. Ward and Armony~\cite{wararm13} describe a blind policy to be a control policy that can depend on the system state but not the system parameters. According to this definition and its usage in the literature, a blind policy is assumed not to have perfect information of parameters, however it can make decisions that have implicit dependencies on parameters through the system state. For our purposes we need to define the notion of blindness a bit more restrictively , to mean the expected idling time of a server who becomes idle at time $\theta_i^n$ is asymptotically the same with or without the knowledge of the service rate of the server. 
\begin{dfn}
A routing policy is a totally blind policy, if and only if 
\[
\sup_{1\leq i\leq S_P^n(T)} \sqrt{n}\left|\Ebb[\phi_i^n|\CalF_{\theta_i^n-}^n]-\Ebb[\phi_i^n|\CalF_{\theta_i^n}^n]\right|\stackrel{p}{\rightarrow} 0
\]
\end{dfn}
\begin{thm}\label{thm:totallyblind}
Under any totally blind policy, i.e.,
\[
\lim_{n\to \infty} \sup_{1\leq i\leq S_P^n(T)} \sqrt{n}\left|\Ebb[\phi_i^n|\CalF_{\theta_i^n-}^n]-\Ebb[\phi_i^n|\CalF_{\theta_i^n}^n]\right|\stackrel{p}{\rightarrow} 0, \mbox{for all $T>0$},
\]
where condition \eqref{Eq:remainder_cond} holds, then 
\[
\eta_t(A)=\frac{\int_A \mu dF(\mu)}{\int_0^\infty \mu dF(\mu)}, \text{for all }t\geq 0.
\]
\end{thm}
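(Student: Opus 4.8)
The plan is to use Lemma~\ref{lem:limrem} to rewrite $\hat{C}^n(t,A)$ as a compensator-type sum, then exploit the totally blind condition to pull the completing server's rate out of that sum, and finally recognise the surviving weight as the $\mu$-size-biased law $p(A):=\int_A\mu\,dF(\mu)/\int_0^\infty\mu\,dF(\mu)$ on $\Rbb_+$. Since \eqref{Eq:remainder_cond} is in force, Lemma~\ref{lem:limrem} gives
\[
\sup_{0\le t\le T}\left|\hat{C}^n(t,A)-\frac{1}{\sqrt n}\sum_{i=1}^{S_P^n(t)}\Ebb\big[\Upsilon_i^n(A)\phi_i^n\,\big|\,\CalF_{\theta_i^n-}^n\big]\right|\toP0 ,
\]
and taking $A=\Rbb_+$, where $\Upsilon_i^n(\Rbb_+)\equiv1$, identifies $\frac{1}{\sqrt n}\sum_{i=1}^{S_P^n(t)}\Ebb[\phi_i^n\mid\CalF_{\theta_i^n-}^n]$ with $\hat{C}^n(t,\Rbb_+)$ up to a uniform error vanishing in probability. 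The whole theorem therefore reduces to comparing $\Ebb[\Upsilon_i^n(A)\phi_i^n\mid\CalF_{\theta_i^n-}^n]$ with $p(A)\,\Ebb[\phi_i^n\mid\CalF_{\theta_i^n-}^n]$ in a summable sense.

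First I would condition one level finer. The indicator $\Upsilon_i^n(A)$ is $\CalF_{\theta_i^n}^n$-measurable, since the completing server $\kappa_i^n$ is revealed at $\theta_i^n$, but it is \emph{not} $\CalF_{\theta_i^n-}^n$-measurable, so the tower property yields
\[
\Ebb\big[\Upsilon_i^n(A)\phi_i^n\,\big|\,\CalF_{\theta_i^n-}^n\big]=\Ebb\Big[\Upsilon_i^n(A)\,\Ebb\big[\phi_i^n\,\big|\,\CalF_{\theta_i^n}^n\big]\,\Big|\,\CalF_{\theta_i^n-}^n\Big].
\]
Writing $\Ebb[\phi_i^n\mid\CalF_{\theta_i^n}^n]=\Ebb[\phi_i^n\mid\CalF_{\theta_i^n-}^n]+\varepsilon_i^n$, the totally blind hypothesis states precisely that $\sqrt n\,\sup_{i\le S_P^n(T)}|\varepsilon_i^n|\toP0$. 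The $\CalF_{\theta_i^n-}^n$-measurable factor $\Ebb[\phi_i^n\mid\CalF_{\theta_i^n-}^n]$ then pulls out, while the remaining factor is the splitting probability: because $U_i^n$ is independent of $\CalF_{\theta_i^n-}^n$ and the rates $\{\tilde{\mu}_k^n\}$ are $\CalF_0^n$-measurable,
\[
\Ebb\big[\Upsilon_i^n(A)\,\big|\,\CalF_{\theta_i^n-}^n\big]=\frac{\sum_{k=1}^n\delta_{\tilde{\mu}_k^n}(A)\,\tilde{\mu}_k^n}{\sum_{k=1}^n\tilde{\mu}_k^n}=:p_n(A),
\]
the same quantity for every $i$. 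This is exactly the $\mu$-biasing predicted by the minimum-of-exponentials heuristic in the introduction: the server that completes is server $k$ with probability proportional to $\tilde{\mu}_k^n$. The strong law of large numbers applied to numerator and denominator gives $p_n(A)\to p(A)$ almost surely.

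Collecting these, $\Ebb[\Upsilon_i^n(A)\phi_i^n\mid\CalF_{\theta_i^n-}^n]=p_n(A)\,\Ebb[\phi_i^n\mid\CalF_{\theta_i^n-}^n]+r_i^n$ with $|r_i^n|\le\Ebb[\,|\varepsilon_i^n|\mid\CalF_{\theta_i^n-}^n]$. Summing over $i\le S_P^n(t)$ and dividing by $\sqrt n$, the total contribution of the $r_i^n$ is of order $S_P^n(T)\,\sup_i|\varepsilon_i^n|/\sqrt n$; since $S_P^n(T)$ is of order $n$ and $\sqrt n\,\sup_i|\varepsilon_i^n|\toP0$, this tends to $0$ in probability uniformly in $t$. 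Combined with $p_n(A)\to p(A)$ and the boundedness of $\hat{C}^n(\cdot,\Rbb_+)$ from Lemma~\ref{lem:tightness_idle}, this yields
\[
\sup_{0\le t\le T}\left|\hat{C}^n(t,A)-p(A)\,\hat{C}^n(t,\Rbb_+)\right|\toP0 .
\]
This is exactly the estimate driving Lemma~\ref{lem:equivconv}: on $\{t>\tau_\epsilon^n\}$ one has $\hat{C}^n(t,\Rbb_+)>\epsilon$, so dividing through gives $\sup_t|\mathcal{S}_\epsilon^n\eta^n_t(A)-p(A)|\toP0$ for each $\epsilon>0$ and each $A$, while the placeholder choice $\zeta=p$ makes the value on $\{t\le\tau_\epsilon^n\}$ already equal to $p(A)$. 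Upgrading this per-set convergence to weak convergence of the shifted measure-valued processes by Mitoma's theorem, exactly as in the proof of Lemma~\ref{lem:equivconv}, identifies the limiting fairness process as the constant $\eta_t=p$, which is the claim; note that dividing by $\hat{C}^n(\cdot,\Rbb_+)>\epsilon$ on the shifted region also sidesteps having to assert convergence of $\hat{C}^n(\cdot,\Rbb_+)$ itself.

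I expect the main obstacle to be the uniform control underlying the third paragraph: the error from replacing $\CalF_{\theta_i^n}^n$ by $\CalF_{\theta_i^n-}^n$ must be summed over the $O(n)$ potential completions in $[0,T]$ and remain $o(\sqrt n)$ even after conditioning. This is precisely why total blindness is defined with the $\sqrt n$ weighting and a supremum over $i\le S_P^n(T)$ rather than a bare convergence: one needs $\sqrt n\,\sup_i|\varepsilon_i^n|\toP0$ to survive multiplication by the $O(n)$ number of terms and division by $\sqrt n$, all while $S_P^n(T)$ is itself random. Making the bound on $\sum_i|r_i^n|$ rigorous — rather than the heuristic $\sum_i|r_i^n|\le S_P^n(T)\sup_i|\varepsilon_i^n|$, which hides the conditional expectation — is the one genuinely delicate step, and is where I would concentrate the effort.
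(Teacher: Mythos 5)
Your proposal is correct and follows essentially the same route as the paper's own proof: Lemma~\ref{lem:limrem}, then the tower-property decomposition $\Ebb[\Upsilon_i^n(A)\phi_i^n|\CalF_{\theta_i^n-}^n]=\Ebb[\Upsilon_i^n(A)|\CalF_{\theta_i^n-}^n]\,\Ebb[\phi_i^n|\CalF_{\theta_i^n-}^n]+\Ebb[\Upsilon_i^n(A)\varepsilon_i^n|\CalF_{\theta_i^n-}^n]$ with the totally blind condition and $S_P^n(T)/n\to\bar{\mu}T$ killing the error term, and the splitting probability converging to $\int_A\mu\,dF(\mu)/\int_0^\infty\mu\,dF(\mu)$. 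The two spots where you diverge are actually slightly more careful than the paper: your endgame divides by $\hat{C}^n(\cdot,\Rbb_+)>\epsilon$ on the shifted region so that no convergence of $\hat{C}^n(\cdot,\Rbb_+)$ itself is ever asserted (the paper closes with a bare citation that does not quite match the hypotheses of Lemma~\ref{lem:equivconv}), and the sup-versus-conditional-expectation subtlety you flag in your last paragraph — bounding $\sup_i\Ebb[|\varepsilon_i^n|\,|\,\CalF_{\theta_i^n-}^n]$ by the totally blind hypothesis, which only controls $\sup_i|\varepsilon_i^n|$ — is passed over silently in the paper's proof as well.
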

\begin{proof}
First, we analyze 
\begin{align*}
\sum_{i=1}^{S_P^n(t)} \frac{\Ebb[\Upsilon_i(A)\phi_i|\CalF_{\theta_i^n-}]}{\sqrt{n}}&=\sum_{i=1}^{S_P^n(t)} \frac{\Ebb[\Ebb[\Upsilon_i(A)\phi_i|\CalF_{\theta_i^n}]|\CalF_{\theta_i^n-}]}{\sqrt{n}}\\
&=\sum_{i=1}^{S_P^n(t)} \frac{\Ebb[\Upsilon_i(A)\Ebb[\phi_i|\CalF_{\theta_i^n}]|\CalF_{\theta_i^n-}]}{\sqrt{n}}\\
&=\sum_{i=1}^{S_P^n(t)} \frac{\Ebb[\Upsilon_i(A)(\Ebb[\phi_i|\CalF_{\theta_i^n}]-\Ebb[\phi_i|\CalF_{\theta_i^n-}])|\CalF_{\theta_i^n-}]}{\sqrt{n}}+\sum_{i=1}^{S_P^n(t)} \frac{\Ebb[\Upsilon_i(A)\Ebb[\phi_i|\CalF_{\theta_i^n-}]|\CalF_{\theta_i^n-}]}{\sqrt{n}}\\
&=\sum_{i=1}^{S_P^n(t)} \frac{\Ebb[\Upsilon_i(A)(\Ebb[\phi_i|\CalF_{\theta_i^n}]-\Ebb[\phi_i|\CalF_{\theta_i^n-}])|\CalF_{\theta_i^n-}]}{\sqrt{n}}+\sum_{i=1}^{S_P^n(t)} \frac{\Ebb[\Upsilon_i(A)|\CalF_{\theta_i^n-}]\Ebb[\phi_i|\CalF_{\theta_i^n-}]}{\sqrt{n}}\\
\end{align*}
Hence, we have
\[
\sum_{i=1}^{S_P^n(t)} \frac{\Ebb[\Upsilon_i(A)\phi_i|\CalF_{\theta_i^n-}]}{\sqrt{n}}-\frac{\Ebb[\Upsilon_i(A)|\CalF_{\theta_i^n-}]\Ebb[\phi_i|\CalF_{\theta_i^n-}]}{\sqrt{n}}=\sum_{i=1}^{S_P^n(t)} \frac{\Ebb[\Upsilon_i(A)(\Ebb[\phi_i|\CalF_{\theta_i^n}]-\Ebb[\phi_i|\CalF_{\theta_i^n-}])|\CalF_{\theta_i^n-}]}{\sqrt{n}}
\]
For any $\epsilon>0$
\[
\begin{array}{l}
\displaystyle\Pbb\left(\left|\sum_{i=1}^{S_P^n(t)} \frac{\Ebb[\Upsilon_i(A)(\Ebb[\phi_i|\CalF_{\theta_i^n}]-\Ebb[\phi_i|\CalF_{\theta_i^n-}])|\CalF_{\theta_i^n-}]}{\sqrt{n}}\right|>\epsilon_1\right)\\
\quad=\displaystyle\Pbb\left(\left|\sum_{i=1}^{S_P^n(t)} \frac{\Ebb[\Upsilon_i(A)(\Ebb[\phi_i|\CalF_{\theta_i^n}]-\Ebb[\phi_i|\CalF_{\theta_i^n-}])|\CalF_{\theta_i^n-}]}{\sqrt{n}}\right|>\epsilon_1, \frac{\left|S_P^n(T)-n\bar{\mu}T\right|}{n}<\epsilon_2\right)\\
\displaystyle\quad\quad\quad +\Pbb\left(\frac{\left|S_P^n(T)-n\bar{\mu}T\right|}{n}>\epsilon_2\right)\\
\quad\leq\displaystyle\Pbb\left(\sum_{i=1}^{S_P^n(t)} \frac{\Ebb[\left|(\Ebb[\phi_i|\CalF_{\theta_i^n}]-\Ebb[\phi_i|\CalF_{\theta_i^n-}])\right||\CalF_{\theta_i^n-}]}{\sqrt{n}}>\epsilon_1, \frac{\left|S_P^n(T)-n\bar{\mu}T\right|}{n}<\epsilon_2\right)\\
\displaystyle\quad\quad\quad +\Pbb\left(\frac{\left|S_P^n(T)-n\bar{\mu}T\right|}{n}>\epsilon_2\right)\\
\quad\leq\displaystyle\Pbb\left( \frac{\displaystyle(\bar{\mu}T+\epsilon_2)\sqrt{n}\sup_{1\leq i\leq S_P(T)}\Ebb[\left|(\Ebb[\phi_i|\CalF_{\theta_i^n}]-\Ebb[\phi_i|\CalF_{\theta_i^n-}])\right||\CalF_{\theta_i^n-}]}{\sqrt{n}}>\epsilon_1\right)\\
\displaystyle\quad\quad\quad +\Pbb\left(\frac{\left|S_P^n(T)-n\bar{\mu}T\right|}{n}>\epsilon_2\right)\\
\end{array}
\]
This implies 
\[
\displaystyle\sum_{i=1}^{S_P^n(t)} \left(\frac{\Ebb[\Upsilon_i(A)\phi_i|\CalF_{\theta_i^n-}]}{\sqrt{n}}-\frac{\Ebb[\Upsilon_i(A)|\CalF_{\theta_i^n-}]\Ebb[\phi_i|\CalF_{\theta_i^n-}]}{\sqrt{n}}\right)\toP 0 \mbox{ in }\Dbb_{\Rbb}[0,T].
\]
Again, since the limiting function (zero function) is constant and hence continuous, this convergence can be taken in the supremum norm. Combining this with Lemma~\ref{lem:limrem} and realizing that 
\[
\Ebb[\Upsilon_i^n(A)|\CalF_{\theta_i^n-}]=\displaystyle\frac{n^{-1}\sum_{k=1}^n\delta_{\tilde{\mu}_k^n}(A)(1-I_k^n(\theta_i^n-))\tilde{\mu}_k^n}{n^{-1}\sum_{k=1}^n\tilde{\mu}_k^n}\to \frac{\int_A \mu dF(\mu)}{\int_0^\infty \mu dF(\mu)} \mbox{ w.p. 1},
\]
we have 
\[
\sup_{0\leq s\leq T}\left|\hat{C}^n(t,A)-\frac{\int_A \mu dF(\mu)}{\int_0^\infty \mu dF(\mu)}\int_0^t\hat{I}(s)ds\right|\toP 0.
\]
Our theorem is then implied by Lemma~\ref{lem:cumuleffort}.
\end{proof}

Intuitively,  as the number of servers approaches infinity, the number of idle servers are negligible compared to the servers who are busy. It is well known from basic probability that if we have two independent exponential random variables, say $U$ and $V$, with rates $\lambda_1$ and $\lambda_2$, the probability that $U$ is less than $V$ is $\lambda_1/(\lambda_1+\lambda_2)$.  Hence, using this basic property and the fact that the idle servers are negligible, the probability that a server becoming idle at any given time belongs to set $A$ is  asymptotically constant in time and equal to 
\[
\frac{\int_A \mu dF(\mu)}{\int_0^\infty \mu dF(\mu)}.
\]
In other words, totally blind policies asymptotically equalize the time to stay idle for all servers once they become idle. The servers with higher service rates become idle more frequently, proportional to their service rates and hence, their share of total cumulative idle time is also proportional to their service rates.

\subsubsection*{Longest-Idle-Server-First Policy}

We show that Longest-Idle-Server-First policy is totally blind and the condition~\ref{Eq:remainder_cond} holds. We know that the server who becomes idle at time $\theta_i^n$ starts the next service exactly after arrival of the next $\sum_{k=1}^n I_k^n(\theta_i^n)$ customers. Hence, 
\[
\Ebb[\phi_i^n|\CalF_t]\leq n^{-1}\left(\sup_{0\leq s\leq T} \Ebb[u_{A^n(s)+1}^n|\CalF_s]+\Ebb[u_1^n]\sup_{0\leq s \leq T}\sum_{k=1}^n I_k^n(s)\right).
\]
and 
\begin{equation}\label{Eq:LISF_expected}
\Ebb[\phi_i^n|\CalF_{\theta_i^n}]=\Ebb[\phi_i^n|\CalF_{\theta_i^n-}]
\end{equation}
for all $i,n\in\Nbb$, as it only depends on the number of idle servers at $\theta_i^n$, which is $\CalF_{\theta_i^n-}$ measurable and the future behavior of the arrivals which is a renewal process. Hence, we have
\begin{thm}
Longest-Idle-Server-First policy is totally blind and satisfies~\eqref{Eq:remainder_cond} and hence the limiting fairness process is as given in Theorem~\ref{thm:totallyblind}.
\end{thm}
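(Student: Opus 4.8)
The plan is to verify the two hypotheses of Theorem~\ref{thm:totallyblind}---that Longest-Idle-Server-First (LISF) is totally blind and that it satisfies~\eqref{Eq:remainder_cond}---and then simply quote that theorem. Total blindness is immediate from the identity~\eqref{Eq:LISF_expected}: under LISF the idle duration $\phi_i^n$ of the server that completes service at $\theta_i^n$ is determined by the number of idle servers at $\theta_i^n-$ (which fixes its rank in the idle queue) together with the future arrival stream, and is independent of the identity---hence of the service rate---of that server. Consequently $\Ebb[\phi_i^n|\CalF_{\theta_i^n}^n]=\Ebb[\phi_i^n|\CalF_{\theta_i^n-}^n]$ for every $i,n$, so the difference inside the supremum defining a totally blind policy is identically zero and therefore converges to zero in probability trivially.

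For~\eqref{Eq:remainder_cond} I would take the estimate recorded just before the statement as the definition of the dominating variable. For $i\le S_P^n(t)$ one has $\theta_i^n\le t$, so $(\phi_i^n-t+\theta_i^n)^+\le\phi_i^n$, and combining this with that estimate gives
\[
\Ebb\bigl[(\phi_i^n-t+\theta_i^n)^+\,|\,\CalF_t^n\bigr]\le\Ebb[\phi_i^n\,|\,\CalF_t^n]\le n^{-1}\Bigl(\sup_{0\le s\le T}\Ebb[u_{A^n(s)+1}^n\,|\,\CalF_s^n]+\Ebb[u_1^n]\sup_{0\le s\le T}\sum_{k=1}^n I_k^n(s)\Bigr)=:\vartheta_T^n;
\]
an analogous bound holds for the initially idle servers indexed by $-n\le i\le-1$, whose idle durations are likewise dominated by the waiting time for the next $\sum_k I_k^n(0)\le\sup_s\sum_k I_k^n(s)$ arrivals. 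Since the right-hand side is free of $t$ and $i$, taking both suprema leaves the bound by $\vartheta_T^n$ intact, and it remains only to show $\Ebb[\vartheta_T^n]\to0$.

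I would estimate the two summands of $\Ebb[\vartheta_T^n]$ separately. Writing $\sup_{0\le s\le T}\sum_k I_k^n(s)=\sqrt n\,|\hat I^n|_{*,T}$, the second term contributes $n^{-1/2}\Ebb[u_1^n]\,\Ebb[|\hat I^n|_{*,T}]$; the decomposition~\eqref{eq:Itight} from Lemma~\ref{lem:tightness_idle} bounds $|\hat I^n|_{*,T}$ by a finite sum of $\sqrt n$-scaled renewal, Poisson and martingale terms plus the initial displacement, each bounded in $L^2$ (via Doob's and the Burkholder--Davis--Gundy inequalities for the martingale pieces and the renewal/functional-CLT variance bounds for the rest), so $\sup_n\Ebb[|\hat I^n|_{*,T}]<\infty$ and this term is $O(n^{-1/2})$. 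The first term is controlled by the largest interarrival interval on $[0,T]$: bounding $A^n(T)$ by a deterministic multiple of $n$ off a negligible event and using the finite-variance assumption on the $u_k^n$ (inherited from the definition of $c_a^2$), one gets $\Ebb[\sup_{0\le s\le T}\Ebb[u_{A^n(s)+1}^n|\CalF_s^n]]=O(\sqrt n)$, whence this term is also $O(n^{-1/2})$. Therefore $\Ebb[\vartheta_T^n]=O(n^{-1/2})\to0$, establishing~\eqref{Eq:remainder_cond}.

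With both hypotheses verified, Theorem~\ref{thm:totallyblind} yields $\eta_t(A)=\int_A\mu\,dF(\mu)\big/\int_0^\infty\mu\,dF(\mu)$ for all $t\ge0$, as claimed. I expect the only genuinely delicate point to be upgrading the tightness of $|\hat I^n|_{*,T}$ to a uniform $L^1$ (or $L^2$) bound and controlling the maximal interarrival time, precisely because~\eqref{Eq:remainder_cond} is phrased through convergence of \emph{expectations} rather than convergence in probability; the renewal-theoretic estimate of the residual term $\sup_s\Ebb[u_{A^n(s)+1}^n|\CalF_s^n]$ is the one place where the finite-variance hypothesis on the interarrival times is genuinely used.
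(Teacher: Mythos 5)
Your proposal takes essentially the same route as the paper's proof, which consists of exactly your two ingredients: total blindness follows from the identity \eqref{Eq:LISF_expected} (the difference inside the supremum is identically zero), and \eqref{Eq:remainder_cond} is verified using the same pre-theorem bound $\Ebb[\phi_i^n|\CalF_t^n]\le n^{-1}\bigl(\sup_{0\le s\le T}\Ebb[u_{A^n(s)+1}^n|\CalF_s^n]+\Ebb[u_1^n]\sup_{0\le s\le T}\sum_{k=1}^n I_k^n(s)\bigr)$, where the paper simply asserts the residual interarrival term is bounded and invokes tightness of $|\hat{I}^n|_{*,T}$ from Lemma~\ref{lem:tightness_idle}. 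Your additional effort to establish $\Ebb[\vartheta_T^n]\to 0$ (which \eqref{Eq:remainder_cond} literally demands, and which convergence in probability via tightness does not by itself deliver) goes beyond what the paper records and correctly locates the delicate point; just note that the moment claims you lean on there --- uniform $L^2$ control of the initial displacement $\hat{X}^n(0)$, which Assumption~\ref{asm:initial} (weak convergence only) does not provide, and the $O(\sqrt{n})$ bound on $\sup_s\Ebb[u_{A^n(s)+1}^n|\CalF_s^n]$ under bare finite variance, where the mean residual life need not be uniformly controlled --- require hypotheses beyond those stated, a gap that is present, and indeed larger, in the paper's own one-sentence treatment.
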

\begin{proof}
For any fixed $T>0$, $\sup_{0\leq t\leq T}\Ebb[u_{A^n(s)+1}^n|\CalF_{t}]$ is bounded and $\left\{|\hat{I}^n|_{*,T}\right\}_{n\in\Nbb}$ is tight, which implies the \eqref{Eq:remainder_cond} holds. Also, for any $i$ and $n$, \eqref{Eq:LISF_expected} implies $\Ebb[\phi_i^n|\CalF_{\theta_i^n}]-\Ebb[\phi_i^n|\CalF_{\theta_i^n-}]=0$ and hence LISF policy is totally blind. 
\end{proof}

\section{Conclusion}

In this work, we analyze many-server queueing systems where the service rates of servers in the system are i.i.d. random variables. These systems have been studied in an earlier paper by Atar~\cite{atar08} for two routing policies, longest-idle-server-first and fastest-server-first policies, in an ad hoc manner. Our main contribution is to develop a general framework to analyze many-server queueing systems with service rate uncertainty using  measure-valued stochastic processes. We introduce the fairness process which assumes values in the space of probability measures denoting the proportion of cumulative idleness shared by servers having different service rates. Unfortunately, it is possible to show that the fairness processes cannot be analyzed in the standard Skorokhod topologies and we introduce a modified notion of convergence for these processes. We also include customer abandonments into our analysis, which we believe to be indispensible for systems with service raet uncertainty.  We also show how the limits obtained in~\cite{atar08} can be obtained using martingale methods. 

Even though, the many-server systems with uncertain service rates can be seen as a replacement for servers belonging to pools where servers are identical, we believe that this work also should be extended to include networks of queues with pools of servers, where within a server pool the service rates follow  the same distribution, but may follow different distributions among pools. The introduced diffusion limits can also be used to analyze staffing of many-server systems with service rate uncertainty, and how the variability of the service rate affect the staffing and routing decisions. We also believe that the similar fairness processes can be defined to analyze service systems with heterogeneous and non-exponential service times and data-driven systems where the service rates are either time dependent or learned through data as time evolves. 

\section*{Acknowledgements}
We would like to thank Istvan Gy\"{o}ngi, Sandy Davie and Gon\c{c}alo Dos Reis for the valuable discussions on this problem. 
\bibliographystyle{abbrv}
\bibliography{bb}

\end{document}